\numberwithin{equation}{section}
\newcommand{\rsdraw}[3]{\raisebox{-#1\height}{\scalebox{#2}{\includegraphics{#3.eps}}}}
\newcommand{\co}{\colon}
\newcommand{\tens}{\otimes} 
\newcommand{\id}{\mathrm{id}}
\newcommand{\Mod}{\mathrm{Mod}}
\newcommand{\mc}{\mathcal}
\newcommand{\uu}{\Bbb1}
\newcommand{\Ob}{\mathrm{Ob}}
\newcommand{\cA}{\mathcal{A}}
\newcommand{\cB}{\mathcal{B}}
\newcommand{\cC}{\mathcal{C}}
\newcommand{\cX}{\mathcal{Y}}
\newcommand{\rA}{\mathrm A}
\newcommand{\rB}{\mathrm B}
\newcommand{\rC}{\mathrm C}
\newcommand{\rD}{\mathrm D}
\newcommand{\rF}{\mathrm F}
\newcommand{\rG}{\mathrm G}
\newcommand{\rH}{\mathrm H}
\newcommand{\rK}{\mathrm K} 
\newcommand{\rL}{\mathrm L} 
\newcommand{\rM}{\mathrm M}
\newcommand{\rN}{\mathrm N}
\newcommand{\rQ}{\mathrm Q}
\newcommand{\rS}{\mathrm S}
\newcommand{\rT}{\mathrm T}
\newcommand{\rU}{\mathrm U}
\newcommand{\rV}{\mathrm V}
\newcommand{\bA}{\mathbb A}
\newcommand{\bB}{\mathbb B}
\newcommand{\bC}{\mathbb C}
\newcommand{\bD}{\mathbb D}
\newcommand{\bQ}{\mathbb Q}
\newcommand{\bP}{\mathbb P}
\newcommand{\bL}{\mathbb L}
\newcommand{\bS}{\mathbb S}
\newcommand{\bT}{\mathbb T}
\newcommand{\cCat}{\mathsf{Cat}} 
\newcommand{\cSet}{\mathsf{Set}} 
\newcommand{\cSgp}{\mathsf{SemiGp}} 
\newtheorem{thm}{Theorem}[section]
\newtheorem{prop}[thm]{Proposition}
\newtheorem{lem}[thm]{Lemma}
\newtheorem{cor}[thm]{Corollary}
\theoremstyle{definition}
\newtheorem{defn}[thm]{Definition}
\newtheorem{exa}[thm]{Example}
\theoremstyle{remark}
\newtheorem{rem}[thm]{Remark}
\title[
Duplicial functors, descent categories and Hopf modules]{Duplicial functors, descent categories and generalized Hopf modules
}
\author{Ivan Bartulovi\'c}
\address{
    TU Dresden,
    Institute of Geometry,
    01062 Dresden,
    Germany
}
\email{ivan.bartulovic@tu-dresden.de}
\author{John Boiquaye}
\address{Department of Mathematics, 
University of Ghana, 
Botanical \newline \indent Gardens Road,
 Legon, GA-489-9348, 
 P.O. Box LG 62, Accra, 
 Ghana}
\email{jboiquaye@ug.edu.gh}
\author{Ulrich Krähmer}
\address{
    TU Dresden,
    Institute of Geometry,
    01062 Dresden,
    Germany
}
\email{ulrich.kraehmer@tu-dresden.de}
\begin{document}

\begin{abstract} 
Böhm and Ştefan have expressed 
cyclic homology as an invariant 
that assigns homology
groups $\mathrm{HC}^\chi_i(\rN,\rM)$
to right and left coalge\-bras 
$\rN$ respectively $\rM$ over 
a  distributive
law $\chi $ between two 
como\-nads. 
For the key  
example associated to a bialgebra 
$H$, right
$\chi$-coalgebras have a 
description in terms of
modules and comodules over 
$H$. 
The present
article formulates conditions
under which such a description
is simultaneously possible for
the left 
$\chi$-coalgebras. In the
above example, this is the
case when the bialgebra $H$ is
a Hopf algebra with bijective
antipode. 
We also discuss how
the generalized Hopf module
theorem by Mesablishvili and
Wisbauer features both in
theory and examples. 
\end{abstract}

\maketitle

\setcounter{tocdepth}{1} 
\tableofcontents 

\section{Introduction}

\subsection{Simplicial and
cyclic objects from comonads}  
The classical homology theories
studied in algebra and geometry such
as group, Lie algebra, Hochschild,
or singular homology can
be expressed as the homology of
a suitable comonad $\bT$ on some
category $\cB$,
see~\cite[Section~8.7]{weibel}: 
given any functors
$\rM \co \mc Y \to
\mc B$ and $\rN \co
\mc B \to \mc Z$, one builds a
simplicial functor  
$\rC_{\bT}(\rN, \rM)_n := 
\rN\rT^{n+1}\rM$, where~$\rT$ is the
endofunctor underlying $\bT$. 
Typi\-cally,
$\mc Y$ is the terminal category and 
$\mc Z$ is the category of 
abelian groups, so    
$\rC_{\bT}(\rN, \rM)$ is just a
simplicial abelian group. 
The homology theory is defined in
terms of the associated chain
complex \cite[Definition 8.3.6]{weibel}. 

Cyclic homology theories~\cite{Con83} 
follow a more general pattern:
as shown by Böhm and \c
Stefan~\cite{BS08},
they define invariants of 
 a comonad distributive law~$\chi \colon 
\bT \bS \Rightarrow \bS
\bT$, or,
equivalently, of a comonad
with a given factorization as
a composition~$\bT\bS$ of two
comonads (see
Section~\ref{distlawsandcoalgs}). 
The coef\-ficients of the theory
are left respectively
right~$\chi $-coalgebras~$\lambda \co \rN \rS \Rightarrow \rN \rT$
    and~$\rho \co \rT \rM
\Rightarrow \rS \rM$
(see
Definition~\ref{chicoalgdef}
for this notion). In the
presence of these extra
structures, the simplicial functor 
$\rC_{\bT}(\rN, \rM)$ becomes a 
duplicial functor 
(see \cite{DK85} for this 
generalization of
Connes' cyclic objects due to Dwyer
and Kan), and 
the associated chain complex
becomes simultaneously a cochain complex.
Thus it is the presence of the
second comonad $\bS$ that leads to the coboundary map which is used to define cyclic homology.

\subsection{Background and
aim}
Assume 
$\rF\co \mc A \rightleftarrows
\mc B \co \rU$ is an adjunction for 
$\bT$, meaning that 
$\rT = \rF\rU$
(see~\cite[Main
Application 8.6.2.]{weibel}). 
If $\bS$ is a lift of a comonad 
$\bC$ on~$\mc A$ through this
adjunction, there is a  
canonical distributive law 
$ \chi \colon \bT \bS
\Rightarrow \bS \bT$ (see 
Section~\ref{dist-via-adj});
in the case of an
Eilenberg--Moore adjunction, 
this yields a
bijective correspondence
between lifts of comonads and
distributive laws \cite[Theorem
2.4]{Wol73}.  
In this setup,
right~$\chi$-coalgebra
structures $\rho$ on a functor 
$\rM \co \mc Y \to
\mc B$ correspond 
bijectively to 
$\bC$-coalgebra structures on 
$\rU\rM$~\cite[Proposition~3.5]{KKS15},
which are much easier to describe in
the most important examples of the
theory (see
Section~\ref{exfrombialgs}
for the explicit 
example obtained from 
a bialgebra $H$). 
The aim of the present article is to
provide an analogous description of
left $\chi$-coalgebra structures on
a functor $\rN \co \mc B \to
\mc Z$.

\subsection{Main result}
The classification of left 
$ \chi $-coalgebras 
is obtained by expanding 
\cite[Remark~3.6]{KKS15}, where it
is suggested that in the presence of
an adjunction $\rV \co \mc C
\rightleftarrows \mc B \co \rG$ for
$\bS$ and a comonad $\bQ$ which
extends  $\bT$ through $\rV \dashv
\rG$,  left $\chi$-coalgebra
structures on $\rN$ should 
correspond
bijectively to $\bQ$-opcoalgebra
structures on $\rN \rV$. We depict the resulting setup in the 
following diagram (see Section~\ref{dist-via-adj} for the
more details):  
 \begin{equation*} 
 \begin{gathered}
\xymatrix@C=8mm@R=5mm{
 & & \mc C \ar@/^-3mm/[dd]_\rV    \ar@{}[rrdd]^(.25){}="a"^(.75){}="b" \ar@{=>}^-{\tilde \Omega} "a";"b"  \ar[rr]^{\rQ}   \ar@{}[dd]|{\dashv}
 & & \mc C \ar@/^-3mm/[dd]_\rV \ar@{}[dd]|{\dashv} && 
\\\\
       	\mc Y \ar[rr]^{\rM}                 & & \mc B \ar@/^3mm/[dd]^\rU  \ar@{}[dd]|{\dashv} \ar@/^-3mm/[uu]_\rG    \ar@<0.4ex>[rr]^{\rT} \ar@<-0.4ex>[rr]_{\rS}             & & \mc B \ar@/^3mm/[dd]^\rU  \ar@{}[dd]|{\dashv}
\ar@/^-3mm/[uu]_\rG   \ar[rr]^{\rN} && \mc Z.
\\\\
                        & & \mc A  \ar@/^3mm/[uu]^\rF  \ar@{}[rruu]^(.25){}="a"^(.75){}="b" \ar@{=>}_-{\Omega} "a";"b"  \ar[rr]_{\rC}                      & & \mc A 
\ar@/^3mm/[uu]^\rF &&}
\end{gathered}
\end{equation*}

It turns out that \cite[Remark 3.6]{KKS15} is not  true in general, but 
in the most important examples:
extending 
$\bT$ through $\rV \dashv \rG$
yields a comonad distributive law 
$ \psi \colon \bS \bT \Rightarrow 
\bT \bS$, and the claim holds
when $ \psi $ is invertible with~$ \psi^{-1} = \chi$. 
More precisely, we have:

\begin{thm}\label{hauptsatz} 
In the above setting, 
assume that the mate of
the natural isomorphism~$\tilde
\Omega \co \rQ \rG \Rightarrow \rG
\rT$ which implements the extension
$\bQ$ of $\bT$ through the adjunction~$\rV \dashv \rG$ is a colax
isomorphism of comonads. 
Then 
\begin{enumerate}
  \item the induced distributive law $\psi \co \bS\bT \Rightarrow \bT\bS$ is an isomorphism and 
  \item left~$\psi^{-1}$-coalgebra structures~$(\rN, \lambda)$ on~$\mc Z$ correspond 
	bijectively to~$\mathbb{Q}$-opcoalgebra structures on~$\rN\rV$.
\end{enumerate}
\end{thm}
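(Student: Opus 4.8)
The plan is to establish the two assertions in turn, obtaining (2) as the post-composition and opcoalgebra analogue of \cite[Proposition~3.5]{KKS15}. Write $\eta\co\id_{\mc C}\Rightarrow\rG\rV$ and $\epsilon\co\rV\rG\Rightarrow\id_{\mc B}$ for the unit and counit of $\rV\dashv\rG$, and let $\Omega\co\rV\rQ\Rightarrow\rT\rV$ be the mate of $\tilde\Omega$; by hypothesis $\tilde\Omega$ is invertible and $\Omega$ is a colax isomorphism of comonads, in particular invertible. For (1), I would read off from Section~\ref{dist-via-adj} that the induced distributive law $\psi\co\bS\bT\Rightarrow\bT\bS$ has, on underlying functors, the form
\[
\psi=(\Omega\rG)\circ(\rV\tilde\Omega^{-1})\co\rV\rG\rT\Longrightarrow\rT\rV\rG
\]
(up to the precise bracketing of the whiskerings). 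As a composite of the isomorphisms $\rV\tilde\Omega^{-1}$ and $\Omega\rG$ it is an isomorphism, with inverse $(\rV\tilde\Omega)\circ(\Omega^{-1}\rG)$. It then only remains to invoke the general fact that the inverse of an invertible comonad distributive law of shape $\bS\bT\Rightarrow\bT\bS$ is again a comonad distributive law, now of shape $\bT\bS\Rightarrow\bS\bT$, since the two families of axioms are interchanged by inversion; hence $\psi^{-1}$ is a genuine distributive law and a left $\psi^{-1}$-coalgebra in the sense of Definition~\ref{chicoalgdef} makes sense.

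For (2), I would first construct the bijection at the level of plain natural transformations, disregarding the coalgebra axioms: to $\lambda\co\rN\rS\Rightarrow\rN\rT$ assign
\[
\Phi(\lambda):=(\rN\Omega^{-1})\circ(\lambda\rV)\circ(\rN\rV\eta)\co\rN\rV\Rightarrow\rN\rV\rQ ,
\]
and to $\gamma\co\rN\rV\Rightarrow\rN\rV\rQ$ assign
\[
\Psi(\gamma):=(\rN\rT\epsilon)\circ(\rN\Omega\rG)\circ(\gamma\rG)\co\rN\rS\Rightarrow\rN\rT .
\]
That $\Psi\Phi=\id$ and $\Phi\Psi=\id$ follows from the middle-four interchange law, the two triangle identities for $\rV\dashv\rG$, and $\Omega^{-1}\Omega=\Omega\Omega^{-1}=\id$; no coalgebra axioms are needed, so $\Phi$ is a bijection between all natural transformations $\rN\rS\Rightarrow\rN\rT$ and all natural transformations $\rN\rV\Rightarrow\rN\rV\rQ$, with inverse $\Psi$. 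Note that $\tilde\Omega$ itself does not enter here: it appears only through $\psi$, hence through the coalgebra axioms on the $\lambda$-side, while invertibility of $\Omega$ is already used to make sense of $\Phi$.

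It then remains to show that, under this bijection, $\lambda$ satisfies the axioms of Definition~\ref{chicoalgdef} for $\psi^{-1}$ if and only if $\gamma:=\Phi(\lambda)$ satisfies the opcoalgebra identities $(\rN\rV\varepsilon_{\bQ})\circ\gamma=\id$ and $(\rN\rV\delta_{\bQ})\circ\gamma=(\gamma\rQ)\circ\gamma$. The counit equivalence is short: substitute $\gamma=\Phi(\lambda)$ and use one triangle identity together with the compatibility of $\Omega$ with the counits $\varepsilon_{\bQ}$ and $\varepsilon_{\bT}$. The coassociativity equivalence is the delicate step, and I expect it to be the main obstacle: one substitutes $\gamma=\Phi(\lambda)$ and the explicit form $\psi^{-1}=(\rV\tilde\Omega)\circ(\Omega^{-1}\rG)$ into the respective axioms, applies the colax comonad-morphism identity relating $\Omega$ to $\delta_{\bQ}$ and $\delta_{\bT}$, and then collapses the resulting pasting diagram by repeated use of interchange, the triangle identities, and the invertibility of $\Omega$ and $\tilde\Omega$, until it matches term by term with the $\bQ$-opcoalgebra coassociativity; the converse implication is symmetric. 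It is exactly here that invertibility is unavoidable: without $\psi$ invertible there is no $\psi^{-1}$, hence no axiom to match, and running the argument with $\psi$ (or $\chi$) in place of $\psi^{-1}$ does not close the diagram — which is precisely why the unqualified form of \cite[Remark~3.6]{KKS15} fails and why the colax-isomorphism hypothesis is needed.
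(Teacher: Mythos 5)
Your part (1) and your construction of the correspondence coincide with the paper's own argument: writing your $\Omega$ as the paper's $\tilde\Lambda\co\rV\rQ\Rightarrow\rT\rV$ (note your symbol clashes with the paper's $\Omega\co\rC\rU\Rightarrow\rU\rS$ from the lower adjunction), you obtain $\psi=\tilde\Lambda\rG\circ\rV\tilde\Omega^{-1}$ and $\psi^{-1}=\rV\tilde\Omega\circ\tilde\Lambda^{-1}\rG$, exactly as in the paper, and your $\Phi$ and $\Psi$ are literally the paper's assignments $\lambda\mapsto\nabla^{\lambda}=\rN\tilde\Lambda^{-1}\circ\lambda\rV\circ\rN\rV\eta$ and $\nabla\mapsto\lambda^{\nabla}=\rN\rT\varepsilon\circ\rN\tilde\Lambda\rG\circ\nabla\rG$. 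Your observation that $\Phi$ and $\Psi$ are mutually inverse on \emph{all} natural transformations, using only the snake relations, the interchange law and invertibility of the mate, is correct and is also how the paper concludes bijectivity at the very end of its proof; likewise your remark that the inverse of an invertible distributive law $\bS\bT\Rightarrow\bT\bS$ is a distributive law $\bT\bS\Rightarrow\bS\bT$ is a harmless and correct addition.

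The gap is that the actual content of part (2) --- that $\lambda$ satisfies the left $\psi^{-1}$-coalgebra axioms if and only if $\Phi(\lambda)$ satisfies the $\bQ$-opcoalgebra axioms (and dually for $\Psi$) --- is not proved but only planned: you explicitly defer the coassociativity equivalence as ``the delicate step'' you ``expect to be the main obstacle.'' That verification is the bulk of the paper's proof, consisting of two chains of diagrammatic identities for $\nabla^{\lambda}$ and two for $\lambda^{\nabla}$. Your list of ingredients is essentially right, and the plan does close: besides the colax morphism identity for $(\rV,\tilde\Lambda)$ from $\bQ$ to $\bT$ one uses that, by invertibility, $(\rV,\tilde\Lambda^{-1})$ is a \emph{lax} morphism of comonads from $\bT$ to $\bQ$ (Remarks \ref{rem-mates} and \ref{rem-inv}), and --- an ingredient your sketch does not name --- the identification of the structure of $\bS$ through the adjunction $\rV\dashv\rG$, namely $\rS=\rV\rG$, $\Delta^{\bS}=\rV\eta\rG$ and $\varepsilon^{\bS}=\varepsilon$, which is what allows the $\Delta^{\bS}$ and $\varepsilon^{\bS}$ appearing in the left-coalgebra axioms to be absorbed by the snake relations after substituting $\psi^{-1}=\rV\tilde\Omega\circ\tilde\Lambda^{-1}\rG$ and rewriting $\tilde\Omega$ in terms of its mate. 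So the deficiency is one of execution rather than of idea: as written, the proposal establishes a bijection of underlying natural transformations but not that it restricts to a bijection of coalgebra/opcoalgebra structures, which is precisely the theorem; it is an accurate outline of the paper's proof rather than a complete proof.
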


\subsection{Hopf
module theorems and descent categories}
The probably most natural instance
of Theorem~\ref{hauptsatz} is
constructed as follows:
suppose we are initially given a
mixed distributive law $\theta$
between a monad $\bB$ and a comonad~$\bC$ on a category $\mc A$. 
First, we define 
$\mc B = \mc A^\bB$ to be the
category of~$\bB$-algebras. As
mentioned above,  
\cite[Theorem 2.4]{Wol73} 
determines a 
lift~$\bS$ of $\bC$ to $\mc B$. 
Secondly, we define 
$\mc C=\mc B_{\bS}$ to be the
category of coalgebras over this lift
$\bS$. Using the results
in~\cite[Sections 4.3 and 4.9]{W08}
and Lemma~\ref{mate-computation}
below, we finally obtain a
comonad~$\bQ$ on $\mc C$ that
extends $\bT$, provided
that $\theta$ is an isomorphism.

If $\rV \colon \mc C \rightarrow \mc
B$ denotes the forgetful functor,
then functors $\rK \colon \mc A
\rightarrow \mc C$ 
for which the following
diagram (strictly) commutes
\[
	\xymatrix
	@R+1pc@C+1pc
	{&{\mc B = \mc A^{\bB}}   \\
	{\mc A} \ar[rr]_{\rK} 
	\ar[ur]^{\rF}& & 
	\mc C = (\mc A^{\bB})_{\bS} 
	\ar[ul]_{\rV}
}\] 
correspond to comonad morphisms 
$ \kappa \colon \bT \Rightarrow \bS$
(see \cite[Section 2.2]{MW14}). The
latter appears in different contexts
and particular cases under the
names \emph{canonical
map}~\cite{sch90}, \emph{Galois
map}~\cite{SS05}, \emph{fusion
operator}~\cite{BLV11},
\emph{projection formula}~\cite{CG10},
\emph{multiplicative unitary}
or \emph{Kac-Takesaki operator} \cite{woronowicz}, and
supposedly others.
The
fundamental theorem of Hopf modules
\cite[Proposition 1]{LS69} 
can be generalized 
to \cite[Theorem~4.4]{Me08} which
asserts that 
$\rK$ is an equivalence if
and only if $\kappa$ is an
isomorphism of the comonads
$\bT$ and $\bS$, and, in
addition, the functor~$\rF$ is
comonadic. We then have equivalences
$ \mc A \simeq (\mc A^{\bB})_{\bS}
\simeq (\mc A^{\bB})_{\bT}$.
The latter is the \emph{descent
category} of $\bB$~\cite{Ba12}, 
or the \emph{category of descent
data}~\cite{Me06}. 

If, finally, $\rK^{-1}$ is a
quasi-inverse of $\rK$ and 
$\bP $ is the comonad on $\mc A$
obtained by conjugation of $\bQ$
with $\rK$, then  
(see Corollary~\ref{conjugate}) 
left
$\chi$-coalgebras on $\rN$
correspond to $\bP$-opcoalgebras
on~$\rN\rF$. So only in this case, we
indeed have a description of left $
\chi $-coalgebras which is
completely dual to that of right $
\chi $-coalgebras obtained 
in \cite{KKS15}. 

\subsection*{Notation and
conventions} We denote categories by
calligraphic capitals, functors by
Roman capitals, and natural
transformations by small Greek letters.
As in this introduction, blackboard
bold letters are used for
(co)mo\-nads, with the same letter in
Roman font denoting the underlying
endofunctors.
The class of objects of a category $\cC$ is denoted by $\Ob(\cC)$. 
Throughout the paper, the composition of functors and the horizontal composition of natural transformations is denoted by juxtaposition, while we use~$\circ$ for vertical composition of natural transformations.  

\subsection*{Acknowledgements}
U.K.~is supported by the DFG
grant ``Cocommutative
comonoids'' (KR 5036/2-1). It also is a
pleasure to thank Niels
Kowalzig for discussions on
the content of this paper.  

\section{Preliminaries}\label{sec_prelims}
In this section, we review
some prerequisites and fix
notation, which closely
follows the one used
in~\cite{KKS15}:
we recall in particular 
the notions of
(co)monads, (co)algebras over
these, of distributive laws
between these and of
(co)algebras over distributive
laws.
Throughout, $\mc A,\mc B,\mc C,\ldots,\mc Z$ are categories. 

\subsection{(Co)monads} \label{monad-comonad} 
Let $\mc C$ be a category.
Recall that a monad on $\mc C$
is a monoid in the category of
endofunctors on $\mc C$: 

\begin{defn}
A \emph{monad}~$\bB$ on $\mc
C$ is a
triple~$(\rB, \mu^{\bB},
\eta^{\bB})$, where~$\rB \co
\mc C \to \mc C$ is
an endofunctor and 
$\mu^{\bB} \co \rB\rB\Rightarrow
\rB$ and $\eta^{\bB} \co
\id_{\mc C} \Rightarrow \rB$  
are natural transformations
(the \emph{multiplication}
and the \emph{unit} of $\bB$) satisfying the associa\-ti\-vity and unitality conditions
$\mu^{\bB} \circ \rB \mu^{\bB} =
\mu^{\bB} \circ \mu^{\bB} \rB$
and $\mu^{\bB} \circ \rB \eta^{\bB}
= \id_{\rB} = \mu^{\bB} \circ
\eta^{\bB} \rB$.
\end{defn}

Here and elsewhere, we follow
the convention to write $
\mathrm{id} _{\rB}$ for the
identity morphism of an object
in a category (here the
identity natural
transformation of the
endofunctor $\rB$), but when
forming a horizontal composition 
(``whiskering'' a natural
transformation with a functor) 
or take a monoidal product
with an identity morphism,
we just write $\rB$. The
symbol $ \circ $ is reserved
for the vertical composition
of natural transformations. 

A comonad on $\mc C$ is a
monad on the opposite
category $\mc C^\mathrm{op}$:

\begin{defn}
A \emph{comonad}~$\bC$ is an
endofunctor $\rC$ together
with natural transformations
$\Delta^{\bC} \co \rC\Rightarrow \rC
\rC$ and~$\varepsilon^{\bC}
\co \rC \Rightarrow \id_{\mc C}$
that are called \emph{comultiplication}
and \emph{counit} and satisfy
the coassociativity and
counitality conditions~$\rC\Delta^{\bC} \circ \Delta^{\bC}
= \Delta^{\bC} \rC \circ
\Delta^{\bC}$ and
$\rC\varepsilon^{\bC}\circ
\Delta^{\bC} = \id_{\rC} =
\varepsilon^{\bC} \rC \circ
\Delta^{\bC}$.
\end{defn}

As monoids in a monoidal category, 
monads naturally form a category.
However, for what we will discuss,
the category on which a monad acts
needs to be viewed as part of
the monad that may change under a
morphism of monads, and this leads 
to two choices:  

\begin{defn}
A \emph{lax morphism}
from a monad~$\bA$ on~$\mc C$ to
a monad~$\bB$ on~$\mc D$ is a
pair~$(\rG, \sigma)$ consisting of a functor~$\rG \co \mc D \to \mc C$ and a natural transformation~$\sigma
\co \rA \rG \Rightarrow \rG \rB$  such
that~$\sigma \circ \mu^{\bA}\rG  
=  \rG \mu^{\bB} \circ \sigma \rB
\circ \rA\sigma$ and $\sigma \circ   \eta^{\bA} \rG  =
\rG \eta^{\bB}$;
a~\emph{colax morphism} 
from~$\bA$ to~$\bB$ is a pair~$(\rF, \tau)$ consisting of a functor $\rF \co \mc C \to \mc D$ and a natural transformation~$\tau \co \rF \rA  \Rightarrow  \rB \rF$ such that $\tau \circ \rF \mu^{\bA}  =  
\mu^{\bB} \rF \circ \rB \tau  \circ
\tau \rA$ and~$\tau  \circ   \rF
\eta^{\bA}=  \eta^{\bB} \rF$.  
\end{defn}

These two choices are about 
switching from $\cCat$ to 
$\cCat^\mathrm{op}$ (reverse the
direction of functors between
categories). This is entirely
independent from switching 
from a specific category 
$\mc C \in \Ob(\cCat)$ to  
$\mc C^\mathrm{op} \in 
\Ob(\cCat)$ which turns
monads into comonads. So by viewing
comonads as monads on te opposite
categories, there are analogously 
two types of morphisms between
comonads; we spell this out to make
clear which type we call lax and
which one colax, and which symbols
we will typically use:

\begin{defn}\label{def-lax-comonad}
A \emph{lax morphism} of from  
a comonad $\bC$ on $\mc C$ 
to a comonad~$\bD$ on $\mc D$ 
is a pair $(\rG, \sigma )$
consisting of a functor~$\rG \co
\mc D \to \mc C$ and a natural
transformation~$\sigma \co \rC \rG \Rightarrow \rG \rD$ such that 
$\rG \Delta^{\bD} \circ \sigma
= \sigma \rD \circ \rC \sigma
\circ \Delta^{\bC} \rG$ and  
$\rG \varepsilon^{\bD} \circ
\sigma = \varepsilon^{\bC}
\rG$; a \emph{colax morphism}
from~$\bC$ to~$\bD$ is a pair~$(\rF, \sigma)$, where~$\rF \co \mc C \to \mc
D$ is a functor and~$\tau \co \rF
\rC  \Rightarrow  \rD \rF$ is a natural
transformation such that  
$\Delta^{\bD} \rF \circ \tau =
\rD \tau \circ  \tau \rC \circ
\rF \Delta^{\bC}$ and
$\varepsilon^{\bD}  \rF \circ
\tau = \rF \varepsilon^{\bC}$. 
\end{defn}

\begin{rem} \label{rem-inv} It follows from
these definitions that if~$\rG \co \mc D \to \mc C$ is a functor and if~$\sigma \co \rC \rG \Rightarrow \rG \rD$ is an invertible natural transformation such that~$(\rG,\sigma)$ is a lax morphism of comonads from $\bC$ to $\bD$, then $(\rG, \sigma^{-1})$ is a colax morphism of comonads from $\bD$ to $\bC$.\
\end{rem}

\subsection{(Co)algebras over
(co)monads}
\label{algebras}
Let $\bB$ be a monad on $\mc
C$. 

\begin{defn}
A \emph{$\bB$-algebra} on a
category $\mc X$ is a pair 
$(\rM, \alpha)$ consisting of
a functor $\rM \co \mc X \to
\mc C$ and a natural
transformation~$\alpha \co \rB
\rM \Rightarrow \rM$ (called
\emph{action}) such that 
$\alpha \circ \mu^{\bB} \rM    =
\alpha \circ \rB \alpha$ and 
$\alpha \circ  \eta^{\bB} \rM=
\id_{\rM}$.
\end{defn}

When it comes to the notion of
morphism between algebras, 
$\mc X$ and $\bB$ will always be
fixed in this paper:

\begin{defn}
A morphism between 
$\bB$-algebras~$(\rM, \alpha)$
and~$(\rN, \beta)$ on $\mc X$ is a natural transformation~$f\co \rM \Rightarrow \rN$ such that~$f \circ \alpha = \beta \circ  \rB f$. 
The \emph{Eilenberg--Moore
category of}~$\bB$ is the
category~$\mc C^{\bB}$ of
$\bB$-algebras on the terminal
category.
\end{defn}

Thus the objects of $\mc
C^\bB$ are pairs~$(X,\alpha)$
of an object~$X \in \Ob(\mc
C)$ and a morphism~$\alpha \co
\rB(X)\to X$ in $\mc C$ such
that $\alpha \mu^{\bB}_X=
\alpha \rB(\alpha)$
and~$\alpha \eta^{\bB}_X =
\id_X$; 
here
the concatenation
denotes the composition of 
morphisms in $\mc C$ (recall 
we only use $ \circ $ to
denote the
vertical composition of
natural transformations). 
A morphism from~$(X,\alpha)$ to~$(Y, \beta)$ is a morphism~$f \co X\to Y$ in $\mc C$ such that~$f\alpha= \beta \rB(f)$.

As with all other notions, 
these definitions have the
obvious dual version:

\begin{defn}
Let $\bC$ be a comonad on $\mc C$. 
A \emph{$\bC$-coalgebra} on a
category~$\mc X$ is a
pair~$(\rM, \gamma)$,
where~$\rM \co \mc X \to \mc
C$ is a functor and~$\gamma
\co \rM \Rightarrow \rC \rM$ is a
natural transformation (called
\emph{coaction}) which
satisfies
$\Delta^{\bC} \rM \circ \gamma= \rC
\gamma  \circ \gamma$ and
$\varepsilon^{\bC} \rM  \circ \gamma
= \id_{\rM}$.
A morphism between coalgebras~$(\rM, \gamma)$ and~$(\rN,\delta)$ is a natural transformation~$f\co \rM \Rightarrow \rN$ such that~$ \delta \circ f =  \rC f \circ \gamma$. 
If the cate\-gory~$\mc X$ is
the terminal category, then
the category formed by
coalgebras over~$\bC$ is
called the
\emph{Eilenberg--Moore
category} of~$\bC$ and is denoted by~$\mc C_{\bC}$. 
\end{defn} 
So an object in $\mc C_{\bC}$
consists of an object~$X \in \Ob(\mc
C)$ together with
a morphism~$\gamma
\co X\to \rC(X)$ in $\mc C$
such that~$\Delta^{\bC}_X
\gamma = \rC(\gamma) \gamma$
and~$\epsilon^{\bC}_X \gamma =
\id_X$, and a morphism
from~$(X,\gamma)$ to~$(Y,
\delta)$ is a morphism~$f \co
X\to Y$  in $\mc C$ such
that~$\delta f= \rC(f)\gamma
$. 

\begin{rem}
In \cite{KKS15}, 
$\mc C_\bC$ was called the
\emph{coKleisli category} of
$\bC$. 
\end{rem}

In Section \ref{mainresult}, we
will furthermore use the
concept of an opcoalgebra over
a comonad $\bC$ on $\mc C$ (one similarly
defines opalgebras over a
monad, but we we will not use
this term):

\begin{defn}
A \emph{$\bC$-opcoalgebra} on
$\mc X$ is 
a functor $\rM \co \mc C \to
\mc X$ together with a natural
transformation~$\gamma \co \rM \Rightarrow
\rM \rC$ satisfying
$\rM \Delta^{\bC} \circ \gamma =
\gamma \rC \circ \gamma$ and $\rM
\varepsilon^{\bC}  \circ \gamma =
\id_{\rM}$. 
A morphism between 
opcoalgebras~$(\rM, \gamma)$
and~$(\rN,\delta)$ on $\mc X$ is a natural transformation~$f\co \rM \Rightarrow \rN$ such that~$ \delta \circ f = f  \rC \circ \gamma $. 
\end{defn}

\begin{rem}
Some authors call (co)algebras
\emph{left (co)modules} and 
op(co)al\-ge\-bras \emph{right
(co)modules}. This is  
motivated by the 
examples of (co)monads on 
monoidal categories that are 
given by tensoring with
(co)monoids therein. 
However, we stick
to the original terminology 
which is more suggestive in
examples in which
the (co)algebras are algebraic
structures such as
semigroups, rings etc. 
\end{rem}

\subsection{From (co)monads to adjunctions and back} \label{monads-and-adjunctions}
We recall:

\begin{defn}
An \emph{adjunction} is a pair of functors
$\rF\co \mc A \rightleftarrows \mc B
\co \rU$,
together with natural
transformations~$\eta \co \id_{\mc
A} \Rightarrow \rU\rF$ and $\varepsilon \co
\rF\rU \Rightarrow \id_{\mc B}$ such that 
\begin{equation}\label{snake-relations}
\varepsilon \rF  \circ \rF\eta = \id_{\rF} \quad \text{and} \quad \rU \varepsilon \circ \eta \rU = \id_{\rU}.
\end{equation}
 We call~$\rF$ the \emph{left adjoint }and~$\rU$ the \emph{right adjoint}, which we shortly denote~$\rF \dashv \rU$.
The natural transformations~$\eta$
and~$\varepsilon$ are respectively
called the \emph{unit} and  the
\emph{counit} of the adjunction~$\rF
\dashv \rU$.
\end{defn}

In this situation, the
endofunctor~$\rB = \rU \rF$ on~$\mc
A$ is part of the monad $\bB= (\rB,
\mu^{\bB}, \eta^{\bB})$,
where~$\mu^\bB = \rU \epsilon \rF$
and~$  \eta^\bB = \eta$, and 
the endofunctor~$\rT = \rF \rU$ on~$\mc B$ is part of the comonad~$\bT = (\rT, \Delta^{\bT}, \varepsilon^{\bT})$, where~$\Delta^{\bT} = \rF\eta \rU $ and $\varepsilon^{\bT} = \varepsilon.$ 

Conversely, any monad~$\bB$ on~$\mc
A$ induces the~\emph{Eilenberg--Moore
adjunction}~$\rF\co \mc A
\rightleftarrows \mc A^{\bB} \co
\rU$. Here, the~\emph{free algebra
functor}~$\rF$ is given by setting~$\rF(X) =
(\rB(X),
\mu^{\bB}_X)$ on objects
and~$\rF(f)= \rB(f)$ on morphisms,
while~$\rU$ is the forgetful
functor defined by~$\rU(X,\alpha) = X$ on objects and by~$\rU(f)= f$ on morphisms.
The unit~$\eta$ of the adjunction is given by setting~$\eta_X= \eta^{\bB}_X$ for~$X \in \Ob( \mc A)$. 
The counit~$\varepsilon$ of the adjunction is defined by setting~$\varepsilon_{(X,\alpha)} = \alpha$ for~$(X,\alpha) \in \Ob(\mc A^{\bB})$.
For a comonad~$\bS$ on~$\mc
B$, the Eilenberg--Moore
adjunction~$\rV\co \mc  B_{\bS}
\rightleftarrows \mc  B \co \rG$
is formed by the  
\emph{cofree coalgebra
functor}, which is 
given by~$\rG(X) = (\rS(X),
\Delta^{\bS}_X)$ on objects
and by~$\rG(f)= \rS(f)$ on morphisms,
while~$\rV$ is the forgetful
functor. 
The unit~$\eta'$ of the adjunction
is defined by
setting~$\eta'_{(X,\gamma)}= \gamma$
for~$(X,\gamma) \in \Ob(\mc B_{\bS})$. 
The counit~$\varepsilon'$ of the
adjunction is given by
setting~$\varepsilon'_X =
\varepsilon^{\bS}_X$ for ~$X \in
\Ob(\mc B)$.

\subsection{Distributive
laws}\label{distlawsandcoalgs}
Let
$\bB$ be a monad and 
$\bC,\bD$ be comonads on $\mc
C$.

\begin{defn}
A \emph{distributive law} of~$\bC$ over~$\bD$ is a natural transformation~$\chi \co \rC \rD \Rightarrow \rD\rC$ such that~$(\rD,\chi)$ is a lax endomorphism of~$\bC$ and~$(\rC, \chi)$ is a colax endomorphism of~$\bD$.
A \emph{mixed distributive} law of ~$\bB$ over ~$\bC$ is a natu\-ral transformation~$\theta \co \rB \rC \Rightarrow \rC \rB$ such that~$(\rC,\theta)$ is a lax endomorphism of the monad~$\bB$ and~$(\rB, \theta)$ is a colax endomorphism of the comonad~$\bC$.
\end{defn}

\begin{rem}
Some authors call (mixed)
distributive laws
\emph{entwinings}. 
\end{rem}

Finally, we recall the definition of 
a coalgebra over a distributive law. 
For this, let~$\bT$ and~$\bS$ be
comonads on a category~$\mc B$
and~$\chi$ be a distributive law of~$\bT$ over~$\bS$.

\begin{defn}\label{chicoalgdef}
 A \emph{right~$\chi$-coalgebra} on a category $\mc Y$ is a pair~$(\rM, \rho)$, where ~$\rM \co \mc Y \to \mc B$ is a 
functor and~$\rho \co \rT \rM \Rightarrow
\rS \rM$ is a natural transformation
such that 
$\Delta^{\bS}\rM  \circ \rho = \rS
\rho \circ \chi \rM \circ \rT \rho
\circ \Delta^{\bT} \rM$ and 
$\varepsilon^{\bS} \rM \circ \rho =
\varepsilon^{\bT} \rM$.
A \emph{left~$\chi$-coalgebra}  on a category  $\mc Z$  is a pair~$(\rN,\lambda)$, where $\rN \co \mc B \to \mc Z$ is a 
functor and~$\lambda \co \rN\rS \Rightarrow  \rN\rT$ is a natural transformation such that 
$\rN \Delta^{\bT}  \circ \lambda =
\lambda \rT \circ  \rN  \chi \circ
\lambda \rS \circ  \rN \Delta^{\bS}$
and $\rN \varepsilon^{\bT} \circ
\lambda = \rN \varepsilon^{\bS}$.
\end{defn}

\subsection{Graphical
calculus}\label{graphical-calc}
Following~\cite[Section 1.A.]{BS08},
we use a graphi\-cal calculus to
visualise natural transformations
and equations between them. 
The dia\-grams are made of arcs colored by functors and of boxes, colored by natural transformations between functors.
Arcs colored by identity functors are omitted. We read our diagrams from right to left and from top to bottom: 
let~$\mc C, \mc D$ and~$\mc E$ be categories and let~$\rF\co \mc C \to \mc D$ and~$\rG\co \mc D\to \mc E$ be functors. The natural transformations~$\id_{\rF}$,~$\alpha \co  \rF\Rightarrow \rG$ and its vertical composition~$\beta \circ \alpha$ with a natural transformation~$\beta \co \rG \Rightarrow \rH$ are depicted by
\[
\,
\psfrag{Y}[cc][cc][0.8]{$\rF$}
\rsdraw{0.45}{1}{id}
\;, \qquad
\,
\psfrag{X}[cc][cc][0.8]{$\rG$}
\psfrag{Y}[cc][cc][0.8]{$\rF$}
\psfrag{f}[cc][cc][0.8]{$\alpha$}
\rsdraw{0.45}{1}{alpha}
\;,\qquad 
\,
\psfrag{X}[cc][cc][0.8]{$\rH$}
\psfrag{Y}[cc][cc][0.8]{$\rG$}
\psfrag{Z}[cc][cc][0.8]{$\rF$}
\psfrag{g}[cc][cc][0.8]{$\alpha$}
\psfrag{f}[cc][cc][0.8]{$\beta$}
\rsdraw{0.45}{1}{morcompo}
\;.\]
The horizontal composition~$\beta \alpha$ of natural transformations~$\alpha \co \rF \Rightarrow \rG$ and $\beta \co \rH \Rightarrow \rK$ is represented by placing a picture of~$\alpha$ to the right of the picture of~$\beta$:
\[
\beta \alpha
=
\,
\psfrag{X}[cc][cc][0.8]{$\rK$}
\psfrag{Y}[cc][cc][0.8]{$\rH$}
\psfrag{U}[cc][cc][0.8]{$\rG$}
\psfrag{V}[cc][cc][0.8]{$\rF$}
\psfrag{f}[cc][cc][0.8]{$\beta$}
\psfrag{g}[cc][cc][0.8]{$\alpha$}
\rsdraw{0.45}{1}{horizontal}
\;.
\]
The natural transformation depicted
by a diagram depends only on the isotopy class of the diagram. For example, the following equality of diagrams 
\[
\, 
\psfrag{X}[cc][cc][0.8]{$\rK$}
\psfrag{Y}[cc][cc][0.8]{$\rH$}
\psfrag{U}[cc][cc][0.8]{$\rG$}
\psfrag{V}[cc][cc][0.8]{$\rF$}
\psfrag{f}[cc][cc][0.8]{$\beta$}
\psfrag{g}[cc][cc][0.8]{$\alpha$}
\rsdraw{0.45}{1}{levelexchange1}\;= \, 
\psfrag{X}[cc][cc][0.8]{$\rK$}
\psfrag{Y}[cc][cc][0.8]{$\rH$}
\psfrag{U}[cc][cc][0.8]{$\rG$}
\psfrag{V}[cc][cc][0.8]{$\rF$}
\psfrag{f}[cc][cc][0.8]{$\beta$}
\psfrag{g}[cc][cc][0.8]{$\alpha$}
\rsdraw{0.45}{1}{levelexchange3}\;=\, 
\psfrag{X}[cc][cc][0.8]{$\rK$}
\psfrag{Y}[cc][cc][0.8]{$\rH$}
\psfrag{U}[cc][cc][0.8]{$\rG$}
\psfrag{V}[cc][cc][0.8]{$\rF$}
\psfrag{f}[cc][cc][0.8]{$\beta$}
\psfrag{g}[cc][cc][0.8]{$\alpha$}
\rsdraw{0.45}{1}{levelexchange2}\; \]
reflects the exchange law of
vertical and horizontal composition: \[\beta \alpha = \rK \alpha \circ \beta  \rF =\beta \rG \circ \rH\alpha.\]
Multiplication and unit of a monad $\bB$ on a category are denoted by \[
\,
	\psfrag{O}[cc][cc][0.8]{$\rB$}
	\rsdraw{0.45}{1}{multiplication}
\; \quad \text{and} \quad
\,
	\psfrag{O}[cc][cc][0.8]{$\rB$}
	\rsdraw{0.45}{1}{unit}
\;.\]  
Similarly, the comultiplication and counit of a comonad $\bC$ are denoted by
\[
\,
	\psfrag{O}[cc][cc][0.8]{$\rC$}
	\rsdraw{0.45}{1}{comultiplication}
\; \quad \text{and} \quad
\,
	\psfrag{O}[cc][cc][0.8]{$\rC$}
	\rsdraw{0.45}{1}{counit}
\;.\]

The unit and the counit of an
adjunction $\rF \dashv \rU$
will be denoted by 
\[ \eta= \,
	\psfrag{F}[cc][cc][0.8]{$\rF$}
	\psfrag{U}[cc][cc][0.8]{$\rU$}
	\rsdraw{0.45}{1}{cap}
\;  \quad \text{and}\quad  \varepsilon = 
\,
	\psfrag{F}[cc][cc][0.8]{$\rF$}
	\psfrag{U}[cc][cc][0.8]{$\rU$}
	\rsdraw{0.45}{1}{cup}
\; , \] 
so the identities in 
\eqref{snake-relations}
become pictorially 
the \emph{snake relations}
\[\,
	\psfrag{F}[cc][cc][0.8]{$\rF$}
	\psfrag{U}[cc][cc][0.8]{$\rU$}
	\rsdraw{0.45}{1}{snake1}
\;= \,
	\psfrag{U}[cc][cc][0.8]{$\rF$}
	\rsdraw{0.45}{1}{snake2}
\; \quad \text{and} \quad \,
	\psfrag{F}[cc][cc][0.8]{$\rU$}
	\psfrag{U}[cc][cc][0.8]{$\rF$}
	\rsdraw{0.45}{1}{snake3}
\;= \,
	\psfrag{U}[cc][cc][0.8]{$\rU$}
	\rsdraw{0.45}{1}{snake2}
\;.\] 

\subsection{Distributive laws from adjunctions} \label{dist-via-adj}
Suppose that $\rF \co \mathcal{A}
\rightleftarrows \mathcal{B} \co
\rU$ is an adjunction with a unit
$\eta$ and counit $\varepsilon$.

\begin{defn}\label{lift}
A \emph{lift} of an endofunctor~$\rC$ on~$\cA$ \emph{through the adjunction} $\rF \dashv \rU$
is an endofunctor 
$\rS$ on $\cB$ together with 
a natural isomorphism $\Omega \colon
\rC\rU \Rightarrow \rU\rS$. 
In this situation, we also call   
$\rC$ an \emph{extension} of
$\rS$ \emph{through the adjunction},
and we say that the diagram
\[
\xymatrix{\cB \ar[r]^ \rS  \ar[d]_{\rU} &   \cB \ar[d]^{\rU}\\
	\cA \ar@{}[ru]^(.25){}="a"^(.75){}="b" \ar@{=>}^-{\Omega} "a";"b"  \ar[r]_{\rC}  & \cA}
\]
strongly commutes up to the 
	isomorphism $\Omega$.

We recall now that the \emph{mate} of natural transformation $\Omega \colon \rC\rU \Rightarrow \rU\rS$ 
under the adjunction $\rF \dashv \rU$ is the natural transformation
$\Lambda \colon \rF\rC \Rightarrow \rS\rF$ defined by
\begin{equation} \label{mate}\Lambda =  \varepsilon \rS\rF \circ \rF \Omega \rF \circ \rF\rC \eta =
\,
	\psfrag{O}[cc][cc][0.8]{$\Omega$}
	\psfrag{U}[cc][cc][0.8]{$\rU$}
	\psfrag{C}[cc][cc][0.8]{$\rC$}
    \psfrag{F}[cc][cc][0.8]{$\rF$}
    \psfrag{S}[cc][cc][0.8]{$\rS$}
	\rsdraw{0.45}{1}{mate}
\;.\end{equation}
\end{defn}
We say that the diagram
\[
\xymatrix{\cB \ar[r]^ \rS  &   \cB \\
	\cA \ar[u]^{\rF}   \ar[r]_{\rC}  & \cA \ar[u]_{\rF} \ar@{}[lu]^(.25){}="a"^(.75){}="b" \ar@{=>}_-{\Lambda} "a";"b" }
\]
	commutes up to $2$-cell $\Lambda$.

\begin{rem} \label{rem-mates}  Assume additionally to the above setup that $\rC$ and $\rS$ are part of comonads $\bS$ and $\bC$.
Note that it follows
from the definitions that $(\rU,\Omega)$ is a lax morphism of comonads from $\bC$ to $\bS$ if and only if $(\rF, \Lambda)$ is a 
colax morphism of como\-nads from $\bC$ to $\bS$. \end{rem}

\begin{defn} \label{comonad-lift} A \emph{lift} of a comonad
$\bC$ on $\cA$ \emph{through the adjunction} $\rF \dashv \rU$
is a comonad 
$\bS$ on $\cB$ together with 
a natural isomorphism $\Omega \colon
\rC\rU \Rightarrow \rU\rS$ such that $(\rU, \Omega)$ is a lax isomorphism of comonads from $\bC$ to $\bS$.
In this situation, we also call   
$\bC$ an \emph{extension} of
$\bS$ \emph{through the adjunction}.  
\end{defn}

Suppose that $\rF \co \mathcal{A} \rightleftarrows \mathcal{B} \co \rU$ is an adjunction and let $\bT$  be the induced comonad on $\mathcal B$ and $\bB$ be the induced monad on $\mathcal A$. 
Further, let $\bS$ be a comonad on $\cB$ which is a lift through the adjunction of a comonad $\bC$ on~$\cA$. 
Assuming that the lift is implemented by the lax isomorphism of como\-nads~$\Omega$ with mate $\Lambda$, we have that the natural transformation
\[ \theta = \Omega^{-1}\rF \circ \rU \Lambda =  
 \,
	\psfrag{O}[cc][cc][0.8]{$\Omega^{-1}$}
	\psfrag{U}[cc][cc][0.8]{$\rU$}
	\psfrag{C}[cc][cc][0.8]{$\rC$}
    \psfrag{F}[cc][cc][0.8]{$\rF$}
    \psfrag{S}[cc][cc][0.8]{$\rS$}
    \psfrag{I}[cc][cc][0.8]{$\Lambda$}
	\rsdraw{0.45}{1}{mixeddist}
\;  \co \rB\rC \Rightarrow \rC\rB
\]
is a mixed distributive law of $\bB$ over $\bC$ and the natural transformation
\[
\chi= \Lambda \rU \circ \rF \Omega^{-1} =  
\,
	\psfrag{I}[cc][cc][0.8]{$\Omega^{-1}$}
	\psfrag{F}[cc][cc][0.8]{$\rU$}
	\psfrag{S}[cc][cc][0.8]{$\rC$}
    \psfrag{U}[cc][cc][0.8]{$\rF$}
    \psfrag{C}[cc][cc][0.8]{$\rS$}
    \psfrag{O}[cc][cc][0.8]{$\Lambda$}
	\rsdraw{0.45}{1}{mixeddist}
\; \co \rT\rS \Rightarrow \rS\rT
\] is a  distributive
law  of $\bT$ over $\bS$. See \cite[Theorem 2.9]{KKS15} for details.

\begin{exa}
If~$\mc A=\mc B$ is the category of
finite-dimensional vector spaces 
over a field~$k$, then any vector
space~$V$ gives rise to an adjunction~$\rF \dashv \rU$ with~$\rF = V \otimes -$ and~$\rU = V^* \otimes -$. 
The counit  is induced by the 
evaluation~$V\tens V^* \to k$ and the unit is
 induced by the coevaluation~$k \to V^* \tens V$. 
Taking~$\rS = \rC = \rU$, we 
can consider~$\rS$ as a lift of~$\rC$ by choosing~$ \Omega = \mathrm{id} $. In this
case, the mate~$ \Lambda \colon V \otimes
V^* \otimes - \rightarrow 
V^* \otimes V \otimes -$ 
is the composition of the evaluation~$ V \otimes V^* \rightarrow k$ 
followed by the coevaluation~$k \rightarrow V^* \otimes V$. 
So even when~$\Omega = \mathrm{id}
$, its mate might be not invertible. 
\end{exa}

\section{Main result} \label{mainresult}

In this section, in analogy with a result from \cite{KKS15}, we discuss classifying left $\chi$-coalgebras on $\rN$ (see Theorem~\ref{helpful}). 
We then discuss the motivating exam\-ple from the so called  descent categories. Finally, we apply this to the particular cases from bimonads in the sense of \cite{MW11} and opmonoidal monads (or, bimonads in the sense of \cite{BLV11}).

\subsection{Setup} As recalled
in the introduction, the
main ingredients in the
monadic approach to
constructing duplicial objects
are two comonads, a
distributive law of one over
the other, and left and right coalgebras over the distributive law.  
In what follows, we are in the setup as shown in the diagram
 \begin{equation} \label{setup-kks}
 \begin{gathered}
\xymatrix@C=8mm@R=5mm{
	\mc Y \ar[rr]^{\rM} & & \mc B \ar@/^3mm/[dd]^\rU \ar@{}[dd]|{\dashv} \ar@<0.4ex>[rr]^{\rT} \ar@<-0.4ex>[rr]_{\rS}
 & & \mc B \ar@/^3mm/[dd]^\rU \ar@{}[dd]|{\dashv}  \ar[rr]^{\rN} && \mc Z
\\\\
                        & & \mc A      \ar@/^3mm/[uu]^\rF  \ar@{}[rruu]^(.25){}="a"^(.75){}="b" \ar@{=>}_-{\Omega} "a";"b"  \ar[rr]_{\rC}                      & & \mc A 
\ar@/^3mm/[uu]^\rF &&}.
\end{gathered}
\end{equation}
Here~$\rM$ and~$\rN$ are functors, $\rF \co \mathcal{A} \rightleftarrows \mathcal{B} \co \rU$ is an adjunction,  $\bT$ is  the  comonad on~$\mathcal B$ induced by $\rF \dashv \rU$, $\bC$ a comonad on $\mathcal{A}$ and $\bS$ a comonad on~$\cB$, which lifts the comonad $\bC$ through the adjunction. 
Let $\Omega  \co \rC \rU \Rightarrow \rU \rS$ be the natu\-ral isomorphism which implements this lift and let $\Lambda \co \rF \rC \Rightarrow \rS \rC$  be its mate. 
We further assume that $(\rU,\Omega)$ is a lax morphism of como\-nads and 
following Section~\ref{dist-via-adj}, we denote the induced distributive law of $\bT$ over~$\bS$ by $\chi$.

\subsection{Coefficients revisited} \label{coeffs}
Following~\cite[Proposition~3.5.1]{KKS15}, right~$\chi$-coalgebras~$(\rM,  \rho)$ on $\cX$ correspond bijectively to
$\bC$-coalgebras~$(\rU \rM, \nabla)$ on~$\cX$ via
\[\rho \mapsto \nabla^{\rho} = 
\,
	\psfrag{O}[cc][cc][0.8]{$\Omega^{-1}$}
	\psfrag{V}[cc][cc][0.8]{$\rF$}
	\psfrag{S}[cc][cc][0.8]{$\rS$}
    \psfrag{G}[cc][cc][0.8]{$\rU$}
    \psfrag{C}[cc][cc][0.8]{$\rC$}
    \psfrag{l}[cc][cc][0.8]{$\rho$}
    \psfrag{N}[cc][cc][0.85]{$\rM$}
	\rsdraw{0.45}{1}{rho-to-nabla}
\; \quad \text{with inverse} \quad \nabla \mapsto \rho^{\nabla} = \,
	\psfrag{O}[cc][cc][0.8]{$\rS$}
	\psfrag{V}[cc][cc][0.8]{$\rU$}
    \psfrag{G}[cc][cc][0.8]{$\rF$}
    \psfrag{T}[cc][cc][0.8]{$\rC$}
    \psfrag{i}[cc][cc][0.8]{$\Omega$}
    \psfrag{N}[cc][cc][0.85]{$\rM$}
    \psfrag{D}[cc][cc][0.85]{$\nabla$}
	\rsdraw{0.45}{1}{nabla-to-rho}
\;. \]
As suggested in \cite[Remark 3.6]{KKS15}, under additional assumptions, 
classifying left $\chi$-coalgebras $(\rN, \lambda)$ on $\mc Z$ should be analogous.
We have the following result.
\begin{thm} \label{helpful} Additionally to the setting described above, assume that
\begin{enumerate}
  \item $\rV\co \mathcal{C} \rightleftarrows  \mathcal{B} \co \rG$ is an adjunction for the 
	comonad~$\mathbb{S}$,
  \item there is a comonad $\mathbb{Q}$ on~$\mc C$ which extends the comonad
	$\mathbb{T}$ through the adjunction, and
  \item the mate of the natural isomorphism~$\tilde \Omega \co \rQ \rG \Rightarrow \rG \rT$ which implements the extension is a lax isomorphism of comonads. 
\end{enumerate}
Then
\begin{enumerate}
  \item the induced distributive law $\psi \co \bS\bT \Rightarrow \bT\bS$ is an isomorphism and 
  \item left~$\psi^{-1}$-coalgebra structures~$(\rN, \lambda)$ on~$\mc Z$ correspond 
	bijectively to~$\mathbb{Q}$-opcoalgebra structures on~$\rN\rV$.
\end{enumerate}
\end{thm}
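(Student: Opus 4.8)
The plan is to mirror, on the ``$\rV\dashv\rG$ side'', the construction recalled in Section~\ref{dist-via-adj} that was used on the ``$\rF\dashv\rU$ side'', and then to glue the two bijections together. First I would apply the machinery of Section~\ref{dist-via-adj} to the adjunction $\rV\dashv\rG$ and the lift $\bQ$ of $\bT$: by hypothesis~(3) the mate $\tilde\Lambda\co\rV\rT\Rightarrow\rQ\rV$ of $\tilde\Omega\co\rQ\rG\Rightarrow\rG\rT$ is a colax isomorphism of comonads (using Remark~\ref{rem-inv}/Remark~\ref{rem-mates}, since $\tilde\Omega$ is a lax \emph{iso}). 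The cited construction then produces a distributive law $\psi=\tilde\Lambda\rG\circ\rV\tilde\Omega^{-1}\co\rT\rS\Rightarrow\rS\rT$ of $\bT$ over $\bS$ --- wait, I must be careful about which way it points; with the roles of $(\rF,\rU,\bC,\bS)$ played by $(\rV,\rG,\bT,\rQ)$ the output is a distributive law of $\bS$ over $\bT$, i.e.\ $\psi\co\rS\rT\Rightarrow\rT\rS$, which is exactly what the statement calls $\psi$. Since now \emph{both} $\tilde\Omega$ and $\tilde\Lambda$ are invertible, $\psi$ is a composition of invertible natural transformations, hence invertible; this gives part~(1). One should also check $\psi^{-1}$ really is a distributive law of $\bT$ over $\bS$ --- this is automatic because the inverse of an invertible distributive law of $\bS$ over $\bT$ is a distributive law the other way (the lax/colax endomorphism conditions dualize under inversion, cf.\ Remark~\ref{rem-inv}).

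For part~(2) I would set up two bijections and compose them. On one hand, by Definition~\ref{chicoalgdef} a left $\psi^{-1}$-coalgebra is a pair $(\rN,\lambda)$ with $\lambda\co\rN\rS\Rightarrow\rN\rT$ satisfying the coassociativity and counitality axioms against $\Delta^{\bT},\Delta^{\bS},\psi^{-1}$. On the other hand, a $\bQ$-opcoalgebra structure on $\rN\rV\co\mc C\to\mc Z$ is a $\gamma\co\rN\rV\Rightarrow\rN\rV\rQ$ satisfying the opcoalgebra axioms. The key is to transport between a natural transformation $\rN\rS\Rightarrow\rN\rT$ on $\mc B$ and a natural transformation $\rN\rV\Rightarrow\rN\rV\rQ$ on $\mc C$ using the adjunction $\rV\dashv\rG$ and the isomorphism $\tilde\Omega$. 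Explicitly, given $\gamma$, one forms (writing $\eta'$, $\varepsilon'$ for unit and counit of $\rV\dashv\rG$, and using $\rS=\rV\rG$)
\[
\lambda^{\gamma}\;=\;\bigl(\rN\rS\xrightarrow{\rN\varepsilon'\text{-adjustment}}\ \cdots\ \xrightarrow{\rN\rV\tilde\Omega\text{ and }\rN\varepsilon'\rT}\rN\rT\bigr),
\]
i.e.\ $\lambda^{\gamma}=\rN\varepsilon'\rT\circ\rN\rV\tilde\Omega\circ\gamma\rG\circ\rN\rS$ up to placing the whiskerings correctly, with inverse $\gamma^{\lambda}=\rN\rV\tilde\Omega^{-1}\circ\lambda\rV\circ\rN\eta'$ (again modulo exact placement). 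That these are mutually inverse is a snake-relation computation; I would do it in the graphical calculus of Section~\ref{graphical-calc}, exactly as the analogous statement for right $\chi$-coalgebras is handled in \cite[Proposition~3.5]{KKS15}.

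The genuinely substantive step is verifying that $\lambda^{\gamma}$ satisfies the left $\psi^{-1}$-coalgebra axioms \emph{if and only if} $\gamma$ satisfies the $\bQ$-opcoalgebra axioms. The counitality equivalence is short: it reduces, via the snake relations and the counit-compatibility $\rG\varepsilon^{\bQ}\circ\tilde\Omega=\varepsilon^{\bT}\rG$ contained in ``$(\rG,\tilde\Omega)$ is a lax morphism of comonads'', to $\rN\varepsilon^{\bT}\circ\lambda=\rN\varepsilon^{\bS}$. The coassociativity equivalence is where the hypothesis that $\psi^{-1}=\chi$-type relation is forced: expanding $\rN\Delta^{\bT}\circ\lambda^{\gamma}=\lambda^{\gamma}\rT\circ\rN\psi^{-1}\circ\lambda^{\gamma}\rS\circ\rN\Delta^{\bS}$ and pushing everything through the two snake relations, one has to recognize the comparison cell $\psi^{-1}$ emerging precisely as $\bigl(\tilde\Lambda\rG\circ\rV\tilde\Omega^{-1}\bigr)^{-1}$, which is where the formula $\psi$ from Section~\ref{dist-via-adj}, the definition of the mate, and the comultiplication-compatibility of $\tilde\Omega$ (the other half of ``lax morphism of comonads'') all get used at once. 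I expect \textbf{this diagram chase to be the main obstacle}: it is the point where an unhelpful sign or orientation in the mate formula, or using $\tilde\Omega$ where $\tilde\Omega^{-1}$ is needed, would break everything, so it must be carried out carefully in the graphical calculus with the snake relations doing the bookkeeping. Once it is done, composing the two bijections $\gamma\leftrightarrow\lambda^{\gamma}$ and ``$\lambda$ is a left $\psi^{-1}$-coalgebra $\Leftrightarrow$ $\gamma$ is a $\bQ$-opcoalgebra'' yields statement~(2), and the naturality/compatibility-with-morphisms claim (that the bijection is in fact an isomorphism of categories) follows by the same snake-relation argument applied to a morphism $f$.
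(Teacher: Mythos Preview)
Your approach is essentially the paper's own proof. The paper also derives $\psi=\tilde\Lambda\rG\circ\rV\tilde\Omega^{-1}$ from Section~\ref{dist-via-adj}, observes that invertibility of both $\tilde\Omega$ and $\tilde\Lambda$ forces $\psi$ to be invertible, and then sets up exactly the bijection you describe, verifying the opcoalgebra and left $\psi^{-1}$-coalgebra axioms by graphical-calculus computations in which the snake relations and the (co)lax morphism identities for $\tilde\Lambda$ and $\tilde\Omega$ do the work. Your identification of the coassociativity chase as the substantive step is accurate; that is precisely where the paper spends most of its diagrams.

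Two small slips to fix before executing. First, the mate goes the other way: $\tilde\Lambda\co\rV\rQ\Rightarrow\rT\rV$, not $\rV\rT\Rightarrow\rQ\rV$ (cf.\ equation~\eqref{mate} with $(\rF,\rU,\rC,\rS)$ replaced by $(\rV,\rG,\rQ,\rT)$); your formula for $\psi$ already presupposes this correct direction, so the typo is harmless. Second, your candidate $\gamma^{\lambda}=\rN\rV\tilde\Omega^{-1}\circ\lambda\rV\circ\rN\eta'$ does not type-check as written: after $\lambda\rV$ you are at $\rN\rT\rV$, not $\rN\rV\rG\rT$, so the right cap is $\rN\tilde\Lambda^{-1}\co\rN\rT\rV\Rightarrow\rN\rV\rQ$ rather than $\rN\rV\tilde\Omega^{-1}$. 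The paper's explicit pair is
\[
\nabla^{\lambda}=\rN\tilde\Lambda^{-1}\circ\lambda\rV\circ\rN\rV\eta',\qquad
\lambda^{\nabla}=\rN\rT\varepsilon'\circ\rN\tilde\Lambda\rG\circ\nabla\rG,
\]
which, via the mate relation, is equivalent to your $\lambda^{\gamma}$ formula written with $\tilde\Omega$.
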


The protagonists of our setup and the above theorem are depicted in the following diagram
 \begin{equation} \label{protagonists}
 \begin{gathered}
\xymatrix@C=8mm@R=5mm{
 & & \mc C \ar@/^-3mm/[dd]_\rV    \ar@{}[rrdd]^(.25){}="a"^(.75){}="b" \ar@{=>}^-{\tilde \Omega} "a";"b"  \ar[rr]^{\rQ}   \ar@{}[dd]|{\dashv}
 & & \mc C \ar@/^-3mm/[dd]_\rV \ar@{}[dd]|{\dashv} && 
\\\\
       	\mc Y \ar[rr]^{\rM}                 & & \mc B \ar@/^3mm/[dd]^\rU  \ar@{}[dd]|{\dashv} \ar@/^-3mm/[uu]_\rG    \ar@<0.4ex>[rr]^{\rT} \ar@<-0.4ex>[rr]_{\rS}             & & \mc B \ar@/^3mm/[dd]^\rU  \ar@{}[dd]|{\dashv}
\ar@/^-3mm/[uu]_\rG   \ar[rr]^{\rN} && \mc Z.
\\\\
                        & & \mc A  \ar@/^3mm/[uu]^\rF  \ar@{}[rruu]^(.25){}="a"^(.75){}="b" \ar@{=>}_-{\Omega} "a";"b"  \ar[rr]_{\rC}                      & & \mc A 
\ar@/^3mm/[uu]^\rF &&}
\end{gathered}
\end{equation}

 So analogous to the classification of right $\chi$-coalgebras $(\rM,\rho)$ on  $\cX$ which is recalled above,  and under the conditions of Theorem \ref{helpful}, left~$\psi^{-1}$-coalge\-bras~$(\rN, \lambda)$ on~$\mc Z$ bijectively correspond to $\bQ$-opcoalgebra structures on $\rN\rV$ via
\[\lambda \mapsto \nabla^{\lambda}=  \,
	\psfrag{O}[cc][cc][0.8]{$\rQ$}
	\psfrag{V}[cc][cc][0.8]{$\rV$}
	\psfrag{T}[cc][cc][0.8]{$\rT$}
    \psfrag{G}[cc][cc][0.8]{$\rG$}
    \psfrag{l}[cc][cc][0.8]{$\lambda$}
    \psfrag{i}[cc][cc][0.8]{$\tilde \Lambda^{-1}$}
    \psfrag{N}[cc][cc][0.85]{$\rN$}
	\rsdraw{0.45}{1}{lambda-to-nabla}
\; \quad \text{with inverse} \quad \nabla \mapsto \lambda^{\nabla} = \,
	\psfrag{O}[cc][cc][0.8]{$\rQ$}
	\psfrag{V}[cc][cc][0.8]{$\rV$}
	\psfrag{T}[cc][cc][0.8]{$\rT$}
    \psfrag{G}[cc][cc][0.8]{$\rG$}
    \psfrag{l}[cc][cc][0.8]{$\lambda$}
    \psfrag{i}[cc][cc][0.8]{$\tilde \Lambda$}
    \psfrag{N}[cc][cc][0.85]{$\rN$}
    \psfrag{D}[cc][cc][0.85]{$\nabla$}
	\rsdraw{0.45}{1}{nabla-to-lambda}
\;. \]
Here $\tilde \Lambda$ is the uniquely determined mate of $\tilde \Omega$.
We give a detailed proof of Theorem \ref{helpful} in Section \ref{proofmain}. 
An immediate corollary is the following.
\begin{cor} \label{immediate}
 If in addition to the
assumptions of Theorem \emph{\ref{helpful}} the equa\-lity~$\chi =\psi^{-1}$ holds, then left $\chi$-coalgebra structures on~$\rN$ correspond 
	bijectively to~$\mathbb{Q}$-opcoalgebra structures on $\rN\rV$.
\end{cor}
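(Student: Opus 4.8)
The plan is to read Corollary~\ref{immediate} off Theorem~\ref{helpful}, which does all of the work. By part~(1) of that theorem the distributive law $\psi \co \bS\bT \Rightarrow \bT\bS$ is invertible, so, applying Remark~\ref{rem-inv} and its dual to the lax- and colax-endomorphism conditions that make $\psi$ a distributive law, $\psi^{-1} \co \bT\bS \Rightarrow \bS\bT$ is a distributive law of $\bT$ over $\bS$ and the notion of a left $\psi^{-1}$-coalgebra is meaningful. Comparing the two clauses of Definition~\ref{chicoalgdef}, a left $\chi$-coalgebra $(\rN,\lambda)$ on $\mc Z$ and a left $\psi^{-1}$-coalgebra $(\rN,\lambda)$ on $\mc Z$ are required to satisfy exactly the same equations, the only difference being that the middle factor of the coassociativity-type identity is $\rN\chi$ in the first case and $\rN\psi^{-1}$ in the second; the counit identity $\rN\varepsilon^{\bT}\circ\lambda = \rN\varepsilon^{\bS}$ does not involve the distributive law at all. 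Under the standing hypothesis $\chi = \psi^{-1}$ these two families of equations coincide, so left $\chi$-coalgebra structures on the fixed functor $\rN$ are literally the same as left $\psi^{-1}$-coalgebra structures on $\rN$. Theorem~\ref{helpful}(2) then yields the claimed bijection with $\bQ$-opcoalgebra structures on $\rN\rV$, still implemented by the displayed assignments $\lambda \mapsto \nabla^{\lambda}$ and $\nabla \mapsto \lambda^{\nabla}$, now read with $\chi$ substituted for $\psi^{-1}$. At the level of the corollary there is thus nothing more to prove and no obstacle to speak of; the content lies entirely in Theorem~\ref{helpful}.

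For completeness I indicate how I would prove that theorem, since the corollary rests on it. For part~(1), I apply the construction of Section~\ref{dist-via-adj} to the adjunction $\rV \co \mc C \rightleftarrows \mc B \co \rG$ of hypothesis~(1), along which $\bT$ is lifted to $\mc B$ --- equivalently, $\bQ$ is an extension of $\bT$ to $\mc C$ --- via the natural isomorphism $\tilde\Omega \co \rQ\rG \Rightarrow \rG\rT$ with mate $\tilde\Lambda \co \rV\rQ \Rightarrow \rT\rV$. Since the comonad that $\rV \dashv \rG$ induces on $\mc B$ is precisely $\bS$, that is, $\rS = \rV\rG$, that construction produces $\psi = \tilde\Lambda\rG \circ \rV\tilde\Omega^{-1} \co \rS\rT \Rightarrow \rT\rS$ as a distributive law of $\bS$ over $\bT$. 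Hypothesis~(3), that the mate $\tilde\Lambda$ of $\tilde\Omega$ is an isomorphism of comonads, makes $\tilde\Lambda$ an invertible natural transformation, and $\tilde\Omega$ is invertible by the very definition of an extension; hence $\psi$, being a composite of natural transformations each of which is invertible, is itself invertible, which is part~(1). Remark~\ref{rem-mates} is what lets one pass freely, throughout the argument, between the lax-morphism condition on $\tilde\Omega$ and the colax-morphism condition on $\tilde\Lambda$.

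For part~(2), I take the two natural transformations displayed just after the statement as the candidate maps and verify three things in the graphical calculus of Section~\ref{graphical-calc}, in parallel with the treatment of right $\chi$-coalgebras in \cite{KKS15}: (a) if $(\rN,\lambda)$ is a left $\psi^{-1}$-coalgebra then $(\rN\rV,\nabla^{\lambda})$ is a $\bQ$-opcoalgebra; (b) conversely $\nabla \mapsto \lambda^{\nabla}$ sends $\bQ$-opcoalgebras to left $\psi^{-1}$-coalgebras; and (c) the two assignments are mutually inverse. Step~(c) is a routine diagram chase: one cancels the pair $\tilde\Lambda^{-1}\tilde\Lambda$ (respectively $\tilde\Omega\tilde\Omega^{-1}$) and uses the snake relations for $\rV \dashv \rG$ to delete the cancelling unit and counit, whereupon the composite collapses to the identity. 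The real work --- and the main obstacle --- is in (a) and (b): to match the opcoalgebra coassociativity $\rN\rV\Delta^{\bQ}\circ\nabla^{\lambda} = \nabla^{\lambda}\rQ \circ \nabla^{\lambda}$ with the left $\psi^{-1}$-coalgebra identity $\rN\Delta^{\bT}\circ\lambda = \lambda\rT \circ \rN\psi^{-1} \circ \lambda\rS \circ \rN\Delta^{\bS}$ one must insert a cancelling pair $\tilde\Lambda\tilde\Lambda^{-1}$ at exactly the right place, use the comonad-morphism identity for $\tilde\Omega$ (equivalently for $\tilde\Lambda$) to convert the comultiplication $\Delta^{\bQ}$ into $\Delta^{\bT}$ transported along $\rG$ (respectively $\rV$), and then recognise the composite $\rN\psi^{-1}$ inside the resulting normal form; carrying out these level-exchange and snake moves in the right order is where the care is needed. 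The counit conditions are a short separate computation using $\rG\varepsilon^{\bT}\circ\tilde\Omega = \varepsilon^{\bQ}\rG$ together with one triangle identity. Once Theorem~\ref{helpful} has been established in this way --- the details being carried out in Section~\ref{proofmain} --- Corollary~\ref{immediate} follows at once, as explained in the first paragraph.
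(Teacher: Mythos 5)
Your proposal is correct and matches the paper, which gives no separate argument for Corollary~\ref{immediate} but presents it as an immediate consequence of Theorem~\ref{helpful}: with $\chi=\psi^{-1}$ the notions of left $\chi$-coalgebra and left $\psi^{-1}$-coalgebra on $\rN$ coincide by Definition~\ref{chicoalgdef}, so the bijection of Theorem~\ref{helpful}(2) applies verbatim. Your supplementary sketch of Theorem~\ref{helpful} also follows the same route as the paper's proof in Section~\ref{proofmain} (the formula $\psi=\tilde\Lambda\rG\circ\rV\tilde\Omega^{-1}$, invertibility via $\tilde\Lambda$, and the graphical verification of the two assignments and their mutual inverseness).
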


\subsection{The case when $\mc A \simeq \mc C$} \label{xmas-tree} A situation of particular interest is the fol\-lo\-wing. Let~$\rK \co \mc A \to \mc C$ be such that the diagram 
\[\xymatrix@R+1pc@C+1pc{&{\mc B}   \\
	{\mc A} \ar[rr]_{\rK} \ar[ur]^{\rF}& &\mc C\ar[ul]_{\rV}
}\] (strictly) commutes and such that~$\rK$ is an equivalence. 
Denote by~$\bar \rK$ a quasi-inverse, by~$\xi \co \rK \bar \rK \Rightarrow \id_{\mc C}$ and~$\phi \co \id_{\mc A} \Rightarrow \bar \rK \rK$ the counit and the unit of the  associated adjunction~$\rK \dashv \bar \rK$. 
Note that  we also have that $\bar \rK$ is left adjoint to $\rK$ with counit~$\phi^{-1} \co \bar \rK \rK \Rightarrow \id_{\mc A}$ and unit~$\xi^{-1} \co \id_{\mc C} \Rightarrow \rK \bar \rK$.
The conjugate comonad~$\bar{\bQ}$ on $\mc A$ of the comonad~$\bQ$ under the adjunction~$\bar \rK \dashv \rK$ (see~\cite[Definition~2.1.1]{AC13}) is then given by
\[\overline {\bQ} = (\bar{\rK} \rQ \rK, \bar \rK \rQ \xi^{-1} \rQ \rK \circ \bar \rK \Delta^{\mathbb Q} \rK, {\phi}^{-1} \circ \bar \rK \varepsilon^{\mathbb Q}\rK ).\] 
The comultiplication and counit of $\overline {\bQ}$ are graphically depicted as
\[
\Delta^{\overline{\bQ}} = 
\,
	\psfrag{O}[cc][cc][0.8]{$\rQ$}
	\psfrag{R}[cc][cc][0.8]{$\bar{\rK}$}
	\psfrag{K}[cc][cc][0.8]{$\rK$}
    \psfrag{l}[cc][cc][0.8]{$\xi^{-1}$}
	\rsdraw{0.45}{1}{conjugating-comult}
\;
, \qquad \varepsilon^{\overline{\bQ}} = \,
	\psfrag{O}[cc][cc][0.8]{$\rQ$}
	\psfrag{R}[cc][cc][0.8]{$\bar{\rK}$}
	\psfrag{K}[cc][cc][0.8]{$\rK$}
    \psfrag{l}[cc][cc][0.8]{$\phi^{-1}$}
	\rsdraw{0.45}{1}{conjugating-counit}
\;
.\]
We have the following corollary of Theorem \ref{helpful}, which permits an even simpler classification of left $\chi$-coalgebras on $\rN$.  
\begin{cor}
\label{conjugate-general} If
we are in the situation just
described, and if 
 ad\-di\-tionally $\chi =\psi^{-1}$
holds, then left~$\chi$-coalge\-bra structures on $\rN$
correspond to~$\overline \bQ
$-opcoalge\-bra structures
on~$\rN\rF=\rN\rV\rK$.
\end{cor}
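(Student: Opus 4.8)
The plan is to deduce Corollary~\ref{conjugate-general} from Corollary~\ref{immediate} by transporting the whole situation along the equivalence $\rK \co \mc A \to \mc C$. Under the hypothesis $\chi = \psi^{-1}$, Corollary~\ref{immediate} already gives a bijection between left $\chi$-coalgebra structures $(\rN,\lambda)$ on $\mc Z$ and $\bQ$-opcoalgebra structures on $\rN\rV$. So it remains only to establish a bijection between $\bQ$-opcoalgebra structures on $\rN\rV$ and $\overline{\bQ}$-opcoalgebra structures on $\rN\rV\rK = \rN\rF$, where the last equality holds by the assumed strict commutativity of the triangle $\rV\rK = \rF$. This reduces everything to a general statement about conjugate comonads: if $\overline{\bQ}$ is the conjugate of $\bQ$ along the adjoint equivalence $\bar\rK \dashv \rK$, then precomposition with $\rK$ induces a bijection between $\bQ$-opcoalgebra structures on a functor $\rP \co \mc C \to \mc Z$ and $\overline{\bQ}$-opcoalgebra structures on $\rP\rK$.

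The key steps are as follows. First I would write down, for a given $\bQ$-opcoalgebra structure $\gamma \co \rP \Rightarrow \rP\rQ$ on $\rP = \rN\rV$, the candidate $\overline{\bQ}$-opcoalgebra structure on $\rP\rK$, namely
\[
\bar\gamma \;=\; \rP \xi \rQ \rK \circ \gamma \rK \co \rP\rK \Rightarrow \rP\rK\,\bar\rK\rQ\rK = \rP\rK\,\overline{\rQ},
\]
using the counit $\xi \co \rK\bar\rK \Rightarrow \id_{\mc C}$ to insert the needed $\rK\bar\rK$; and conversely, for a $\overline{\bQ}$-opcoalgebra structure $\delta \co \rP\rK \Rightarrow \rP\rK\,\overline{\rQ}$, the candidate $\bQ$-opcoalgebra structure obtained by whiskering with $\bar\rK$ and using the unit $\xi^{-1}$ to cancel the resulting $\rK\bar\rK$. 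Second, I would verify that these two assignments are mutually inverse: this is a diagram chase that uses only the triangle identities for $\bar\rK \dashv \rK$ (with unit $\phi$, counit $\xi$) together with naturality; in the graphical calculus of Section~\ref{graphical-calc} it is a short manipulation of the kind already used for $\Delta^{\overline{\bQ}}$ and $\varepsilon^{\overline{\bQ}}$. Third, I would check that $\bar\gamma$ genuinely satisfies the two $\overline{\bQ}$-opcoalgebra axioms $\rP\rK\,\Delta^{\overline{\bQ}} \circ \bar\gamma = \bar\gamma\,\overline{\rQ} \circ \bar\gamma$ and $\rP\rK\,\varepsilon^{\overline{\bQ}} \circ \bar\gamma = \id$, substituting the explicit formulas for $\Delta^{\overline{\bQ}}$ and $\varepsilon^{\overline{\bQ}}$ recalled above and reducing to the $\bQ$-opcoalgebra axioms for $\gamma$ via the snake relations for $\bar\rK \dashv \rK$.

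Finally, composing the two bijections yields the claimed correspondence; tracing through $\lambda \mapsto \nabla^\lambda \mapsto \overline{\nabla^\lambda}$ one obtains an explicit formula for the $\overline{\bQ}$-opcoalgebra structure on $\rN\rF$ in terms of $\lambda$, $\tilde\Lambda^{-1}$, the counit of $\bS$, and $\xi$, which I would record for completeness. The main obstacle I anticipate is purely bookkeeping rather than conceptual: one must be careful that the conjugate comonad $\overline{\bQ}$ is formed along the adjunction $\bar\rK \dashv \rK$ (left adjoint $\bar\rK$, with counit $\phi^{-1}$ and unit $\xi^{-1}$) and not along $\rK \dashv \bar\rK$, so that the whiskering directions and the placement of $\xi$ versus $\xi^{-1}$, $\phi$ versus $\phi^{-1}$ are consistent throughout; the graphical calculus makes this manageable, and since $\rK$ is an equivalence every instance of $\xi$ and $\phi$ is invertible, so no subtlety beyond careful orientation of the 2-cells arises.
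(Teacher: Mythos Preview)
Your proposal is correct and follows essentially the same route as the paper: invoke Corollary~\ref{immediate}, then transport $\bQ$-opcoalgebra structures on $\rN\rV$ to $\overline{\bQ}$-opcoalgebra structures on $\rN\rV\rK=\rN\rF$ along the equivalence $\rK$, with explicit inverse constructions and a routine verification of the opcoalgebra axioms. One small slip: in your formula $\bar\gamma = \rP\,\xi\,\rQ\rK \circ \gamma\rK$ you need $\xi^{-1}$ rather than $\xi$ (as your own prose ``to insert the needed $\rK\bar\rK$'' makes clear, and as you warn about in your final paragraph); the paper's formula is indeed $\overline\nabla = \rN\rV\,\xi^{-1}\rQ\rK \circ \nabla\rK$.
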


\begin{proof}
By Corollary~\ref{immediate}, the left~$\chi$-coalgebra structures on~$\rN $ correspond bijectively to~${\mathbb Q}$-opcoalgebra structures on~$\rN\rV$. 
If~$\nabla  \co \rN\rV \Rightarrow \rN\rV\rQ$ is a~$\bQ$-opcoalgebra structure on~$\rN \rV$, then $\overline \nabla \co \rN\rF \Rightarrow \rN\rF \overline{ \rQ} $, defined by  setting~\[ \overline \nabla= \rN\rV \xi^{-1} \rQ \rK \circ \nabla \rK=\,
	\psfrag{O}[cc][cc][0.8]{$\rQ$}
	\psfrag{R}[cc][cc][0.8]{$\bar{\rK}$}
	\psfrag{K}[cc][cc][0.8]{$\rK$}
    \psfrag{l}[cc][cc][0.8]{$\xi^{-1}$}
    \psfrag{D}[cc][cc][0.8]{\hspace{1.25cm}$\nabla$}
    \psfrag{N}[cc][cc][0.8]{$\rN$}
    \psfrag{V}[cc][cc][0.8]{$\rV$}
	\rsdraw{0.45}{1}{nabla-under-equivalence}
\;\] is a~$\overline {\mathbb Q}$-opcoalgebra structure on~$\rN \rV \rK = \rN\rF$.
Conversely, if~$\overline \nabla \co \rN \rF \Rightarrow \rN\rF \overline{\rQ}$ is a $ \overline{\bQ}$-opcoalgebra structure on~$\rN \rF $, then~$\nabla \co \rN\rV \Rightarrow \rN\rV \rQ$, defined by setting~\[\nabla = \rN\rV \xi \rQ \xi \circ \overline \nabla \bar \rK \circ \rN\rV \xi^{-1}=\,
	\psfrag{O}[cc][cc][0.8]{$\rQ$}
	\psfrag{R}[cc][cc][0.8]{$\bar{\rK}$}
	\psfrag{K}[cc][cc][0.8]{$\rK$}
    \psfrag{l}[cc][cc][0.8]{$\xi^{-1}$}
    \psfrag{k}[cc][cc][0.8]{\hspace{-0.2cm}$\xi$}
    \psfrag{D}[cc][cc][0.8]{\hspace{1.85cm}$\overline{\nabla}$}
    \psfrag{N}[cc][cc][0.8]{$\rN$}
    \psfrag{V}[cc][cc][0.8]{$\rV$}
	\rsdraw{0.45}{1}{nabla-under-equivalence2}
\;\] is a~$\bQ$-opcoalgebra structure on $\rN \rV$. 
It is a straightforward verification that this correspondence  is bijective.
\end{proof}

\subsection{Proof of Theorem \ref*{helpful}} \label{proofmain}
Since $(\rG, \tilde \Omega)$
is a lax isomorphism of como\-nads from $\bQ$ to $\bT$, it follows from Section \ref{dist-via-adj}, that the distributive law $\psi \co \bS \bT \Rightarrow \bT \bS$ induced by $\tilde \Omega$ is computed by
\[
\xymatrix{\psi \colon
	\rS\rT=\rV\rG\rT \ar[r]^-{\rV \tilde \Omega^{-1}} & \rV\rQ\rG
	\ar[r]^-{\tilde \Lambda \rG} & \rT\rV\rG=\rT\rS},
\] where $\tilde \Lambda \co \rV \rQ \Rightarrow \rT \rV$ is the mate of $\tilde \Omega$ through $\rV \dashv \rG$. 
Since $\tilde \Lambda$ is assumed to be an isomorphism, $\psi $ is also an isomorphism with inverse
\begin{equation}\label{chiinv}
\xymatrix{\psi^{-1} \colon
	\rT \rS=\rT \rV\rG \ar[r]^-{\tilde \Lambda^{-1} \rG} & \rV\rQ\rG
	\ar[r]^-{\rV \tilde \Omega} & \rV \rG\rT=\rS\rT}.
\end{equation}
Before proving the second part of
the statement, we recall from Remark~\ref{rem-mates} that since $(\rG, \tilde \Omega)$ is a lax morphism of comonads from $\bQ$ to $\bT$, $(\rV, \tilde{\Lambda})$  is a colax morphism of comonads from $\bQ$ to $\bT$. In this case, it follows from Remark~\ref{rem-inv} that~$(\rV, \tilde{\Lambda}^{-1})$ is a lax morphism of comonads from $\bT$ to $\bQ$.

Now given a left~$\psi^{-1}$-coalgebra $(\rN,\lambda)$ on $\mc Z$, we define the $\mathbb{Q}$-opcoal\-ge\-bra $(\rN \rV, \nabla^{\lambda})$ on $\mc Z$ by setting  
	\[
	\nabla^\lambda = \rN \tilde \Lambda^{-1}\circ  \lambda \rV  \circ \rN\rV\eta= \,
	\psfrag{O}[cc][cc][0.8]{$\rQ$}
	\psfrag{V}[cc][cc][0.8]{$\rV$}
	\psfrag{T}[cc][cc][0.8]{$\rT$}
    \psfrag{G}[cc][cc][0.8]{$\rG$}
    \psfrag{l}[cc][cc][0.8]{$\lambda$}
    \psfrag{i}[cc][cc][0.8]{$\tilde \Lambda^{-1}$}
    \psfrag{N}[cc][cc][0.85]{$\rN$}
	\rsdraw{0.45}{1}{lambda-to-nabla}
\;.
	\]
We have
\begingroup
\allowdisplaybreaks
\begin{align*}
  ( \rN \rV \Delta^{\bQ})\circ \nabla^{\lambda}  \overset{(i)}{=} &  \,
	\psfrag{O}[cc][cc][0.8]{$\rQ$}
	\psfrag{V}[cc][cc][0.8]{$\rV$}
	\psfrag{T}[cc][cc][0.8]{$\rT$}
    \psfrag{G}[cc][cc][0.8]{$\rG$}
    \psfrag{l}[cc][cc][0.8]{$\lambda$}
    \psfrag{i}[cc][cc][0.8]{$\tilde \Lambda^{-1}$}
    \psfrag{N}[cc][cc][0.85]{$\rN$}
	\rsdraw{0.45}{1}{qopcoalg}
\; \phantom{ccc}\overset{(ii)}{=}  \phantom{cii} \,
	\psfrag{O}[cc][cc][0.8]{$\rQ$}
	\psfrag{V}[cc][cc][0.8]{$\rV$}
	\psfrag{T}[cc][cc][0.8]{$\rT$}
    \psfrag{G}[cc][cc][0.8]{$\rG$}
    \psfrag{l}[cc][cc][0.8]{$\lambda$}
    \psfrag{i}[cc][cc][0.8]{$\tilde \Lambda^{-1}$}
    \psfrag{N}[cc][cc][0.85]{$\rN$}
	\rsdraw{0.45}{1}{qopcoalg2}
\; \\ & \\
\overset{(iii)}{=} &  \,
	\psfrag{O}[cc][cc][0.8]{$\rQ$}
	\psfrag{V}[cc][cc][0.8]{$\rV$}
	\psfrag{T}[cc][cc][0.8]{$\rT$}
    \psfrag{G}[cc][cc][0.8]{$\rG$}
    \psfrag{l}[cc][cc][0.8]{$\lambda$}
    \psfrag{i}[cc][cc][0.8]{$\tilde \Lambda^{-1}$}
    \psfrag{N}[cc][cc][0.85]{$\rN$}
    \psfrag{h}[cc][cc][0.85]{$\psi^{-1}$}
    \psfrag{d}[cc][cc][0.85]{\hspace{-1mm}$\id_{\rS}$}
    \psfrag{S}[cc][cc][0.85]{$\rS$}
	\rsdraw{0.45}{1}{qopcoalg3}
\; \phantom{ccci} \overset{(iv)}{=} \phantom{cc}
\,
	\psfrag{O}[cc][cc][0.8]{$\rQ$}
	\psfrag{V}[cc][cc][0.8]{$\rV$}
	\psfrag{T}[cc][cc][0.8]{$\rT$}
    \psfrag{G}[cc][cc][0.8]{$\rG$}
    \psfrag{l}[cc][cc][0.8]{$\lambda$}
    \psfrag{i}[cc][cc][0.8]{$\tilde \Lambda^{-1}$}
    \psfrag{N}[cc][cc][0.85]{$\rN$}
    \psfrag{OM}[cc][cc][0.85]{$\tilde \Omega$}
	\rsdraw{0.45}{1}{qopcoalg4}
\; \\ &\\
\overset{(v)}{=}  &  \,
	\psfrag{O}[cc][cc][0.8]{$\rQ$}
	\psfrag{V}[cc][cc][0.8]{$\rV$}
	\psfrag{T}[cc][cc][0.8]{$\rT$}
    \psfrag{G}[cc][cc][0.8]{$\rG$}
    \psfrag{l}[cc][cc][0.8]{$\lambda$}
    \psfrag{i}[cc][cc][0.8]{$\tilde \Lambda^{-1}$}
    \psfrag{N}[cc][cc][0.85]{$\rN$}
    \psfrag{k}[cc][cc][0.85]{$\tilde \Lambda$}
	\rsdraw{0.45}{1}{qopcoalg5}
\; \overset{(vi)}{=} \phantom{ci}
 \,
	\psfrag{O}[cc][cc][0.8]{$\rQ$}
	\psfrag{V}[cc][cc][0.8]{$\rV$}
	\psfrag{T}[cc][cc][0.8]{$\rT$}
    \psfrag{G}[cc][cc][0.8]{$\rG$}
    \psfrag{l}[cc][cc][0.8]{$\lambda$}
    \psfrag{i}[cc][cc][0.8]{$\tilde \Lambda^{-1}$}
    \psfrag{N}[cc][cc][0.85]{$\rN$}
	\rsdraw{0.45}{1}{qopcoalg6},
\;
\end{align*}
\endgroup
which by definition of $\nabla^\lambda$ equals $\nabla^\lambda \rQ \circ \nabla^\lambda$. 
Here $(i)$ follows from the defi\-nition of $\nabla^\lambda$, $(ii)$ follows 
from the fact that $(\rV,\tilde \Lambda^{-1})$ is a lax morphism of comonads from $\bT$ to $\bQ$, $(iii)$ from
the fact that $(\rN, \lambda)$ is a left $\psi^{-1}$-coalgebra on~$\mc Z$, $(iv)$ from the  
computation~\eqref{chiinv} of $\psi^{-1}$ and from the definition of
 comultiplication on~$\bS$ in terms of unit of~$\rV \dashv \rG$,~$(v)$ follows by expres\-sing~$\tilde \Omega$  in terms of its mate~$\tilde \Lambda$ and by using the snake relations~\eqref{snake-relations}, 
 and~$(vi)$ since~$\tilde \Lambda^{-1} \circ \tilde \Lambda = \id_{\rV \rQ}$.
Further, by definition of  $\nabla^\lambda$, the composition  $\rN \rV \circ \nabla^{\lambda}$ equals
\[
\,
	\psfrag{O}[cc][cc][0.8]{$\rQ$}
	\psfrag{V}[cc][cc][0.8]{$\rV$}
	\psfrag{T}[cc][cc][0.8]{$\rT$}
    \psfrag{G}[cc][cc][0.8]{$\rG$}
    \psfrag{l}[cc][cc][0.8]{$\lambda$}
    \psfrag{i}[cc][cc][0.8]{$\tilde \Lambda^{-1}$}
    \psfrag{N}[cc][cc][0.85]{$\rN$}
	\rsdraw{0.45}{1}{qopcoalg7}
\;\overset{(i)}{=} \,
	\psfrag{V}[cc][cc][0.8]{$\rV$}
	\psfrag{T}[cc][cc][0.8]{$\rT$}
    \psfrag{G}[cc][cc][0.8]{$\rG$}
    \psfrag{N}[cc][cc][0.85]{$\rN$}
	\rsdraw{0.45}{1}{qopcoalg8}
\;\overset{(ii)}{=}
 \,
	\psfrag{V}[cc][cc][0.8]{$\rV$}
	\psfrag{T}[cc][cc][0.8]{$\rT$}
    \psfrag{G}[cc][cc][0.8]{$\rG$}
    \psfrag{N}[cc][cc][0.85]{$\rN$}
    \psfrag{S}[cc][cc][0.85]{$\rS$}
    \psfrag{d}[cc][cc][0.85]{$\id_{\rS}$}
	\rsdraw{0.45}{1}{qopcoalg10}
\;\overset{(iii)}{=}
\id_{\rN \rV}.\]
Here $(i)$ follows from the fact that $(\rV,\tilde \Lambda^{-1})$ is a lax morphism of como\-nads from $\bT$ to $\bQ$, 
$(ii)$ from the fact that $(\rN, \lambda)$ is a left $\psi^{-1}$-coalgebra on~$\mc Z$, and~$(iii)$ follows since the counit of $\bS$ is the counit of adjunction $\rV \dashv \rG$
and by using the snake relations.

	Conversely, to any $\bQ$-opcoalgebra~$(\rN \rV, \nabla)$ on $\mc Z$, we associate a left~$\psi^{-1}$-coal\-ge\-bra $(\rN,  \lambda^{\nabla})$ on $\mc Z$ by setting
	\[
	\lambda^{\nabla} = \rN \rT \varepsilon \circ \rN \tilde \Lambda \rG  \circ \nabla \rG = \,
	\psfrag{O}[cc][cc][0.8]{$\rQ$}
	\psfrag{V}[cc][cc][0.8]{$\rV$}
	\psfrag{T}[cc][cc][0.8]{$\rT$}
    \psfrag{G}[cc][cc][0.8]{$\rG$}
    \psfrag{l}[cc][cc][0.8]{$\lambda$}
    \psfrag{i}[cc][cc][0.8]{$\tilde \Lambda$}
    \psfrag{N}[cc][cc][0.85]{$\rN$}
    \psfrag{D}[cc][cc][0.85]{$\nabla$}
	\rsdraw{0.45}{1}{nabla-to-lambda}
\;.
	\]
We compute
\begingroup
\allowdisplaybreaks
\begin{align*}
  \rN \Delta^{\bT} \circ \lambda^{\nabla} &\overset{(i)}{=} \phantom{o}  \,
	\psfrag{V}[cc][cc][0.8]{$\rV$}
	\psfrag{T}[cc][cc][0.8]{$\rT$}
    \psfrag{G}[cc][cc][0.8]{$\rG$}
    \psfrag{N}[cc][cc][0.85]{$\rN$}
    \psfrag{D}[cc][cc][0.85]{$\nabla$}
    \psfrag{O}[cc][cc][0.85]{$\rQ$}
    \psfrag{i}[cc][cc][0.85]{$\tilde \Lambda$}
	\rsdraw{0.45}{1}{chicoalg}
\;  \phantom{cccci} \overset{(ii)}{=}   \,
	\psfrag{V}[cc][cc][0.8]{$\rV$}
	\psfrag{T}[cc][cc][0.8]{$\rT$}
    \psfrag{G}[cc][cc][0.8]{$\rG$}
    \psfrag{N}[cc][cc][0.85]{$\rN$}
    \psfrag{D}[cc][cc][0.85]{$\nabla$}
    \psfrag{O}[cc][cc][0.85]{$\rQ$}
    \psfrag{i}[cc][cc][0.85]{$\tilde \Lambda$}
	\rsdraw{0.45}{1}{chicoalg2}
\;\\ & \\
&\overset{(iii)}{=}  \,
	\psfrag{V}[cc][cc][0.8]{$\rV$}
	\psfrag{T}[cc][cc][0.8]{$\rT$}
    \psfrag{G}[cc][cc][0.8]{$\rG$}
    \psfrag{N}[cc][cc][0.85]{$\rN$}
    \psfrag{D}[cc][cc][0.85]{$\nabla$}
    \psfrag{O}[cc][cc][0.85]{$\rQ$}
    \psfrag{i}[cc][cc][0.85]{$\tilde \Lambda$}
	\rsdraw{0.45}{1}{chicoalg3}
\; \overset{(iv)}{=}   \,
	\psfrag{V}[cc][cc][0.8]{$\rV$}
	\psfrag{T}[cc][cc][0.8]{$\rT$}
    \psfrag{G}[cc][cc][0.8]{$\rG$}
    \psfrag{N}[cc][cc][0.85]{$\rN$}
    \psfrag{D}[cc][cc][0.85]{$\nabla$}
    \psfrag{O}[cc][cc][0.85]{$\rQ$}
    \psfrag{i}[cc][cc][0.85]{$\tilde \Lambda$}
    \psfrag{k}[cc][cc][0.85]{$\tilde \Lambda^{-1}$}
	\rsdraw{0.45}{1}{chicoalg4}
\; \\ & \\
&\overset{(v)}{=}  \phantom{i} \,
	\psfrag{V}[cc][cc][0.8]{$\rV$}
	\psfrag{T}[cc][cc][0.8]{$\rT$}
    \psfrag{G}[cc][cc][0.8]{$\rG$}
    \psfrag{N}[cc][cc][0.85]{$\rN$}
    \psfrag{l}[cc][cc][0.85]{$\lambda^{\nabla}$}
    \psfrag{O}[cc][cc][0.85]{$\rQ$}
    \psfrag{OM}[cc][cc][0.85]{$\tilde \Omega$}
    \psfrag{k}[cc][cc][0.85]{$\tilde \Lambda^{-1}$}
	\rsdraw{0.45}{1}{chicoalg5}
\; \phantom{cii}\overset{(vi)}{=} 
\,
	\psfrag{T}[cc][cc][0.8]{$\rT$}
    \psfrag{S}[cc][cc][0.8]{$\rS$}
        \psfrag{h}[cc][cc][0.8]{$\psi^{-1}$}
    \psfrag{N}[cc][cc][0.85]{$\rN$}
    \psfrag{l}[cc][cc][0.85]{$\lambda^{\nabla}$}
    \psfrag{O}[cc][cc][0.85]{$\rQ$}
	\rsdraw{0.45}{1}{chicoalg6}
\;.
\end{align*}
\endgroup
Here~$(i)$ follows from the definition of~$\lambda^{\nabla}$,~$(ii)$ follows from the fact that $(\rV,\tilde \Lambda)$
is a colax morphism of comonads from~$\bQ$ to~$\bT$,~$(iii)$ follows from the assumption that~$(\rN\rV, \nabla)$ is a~$\bQ$-opcoalgebra on~$\mc Z$,~$(iv)$ follows from the
snake relations and~$\tilde{\Lambda}^{-1} \circ \tilde \Lambda = \id_{\rV \rQ}$,~$(v)$ follows from the definition of~$\lambda^{\nabla}$ and by expressing~$\tilde \Omega$ in terms of its mate~$\tilde \Lambda$, and~$(vi)$ follows from the definition of comultiplication on~$\bS$ in terms of the unit of the adjunction $\rV \dashv \rG$ and by an application of equation~\eqref{chiinv}.
Further, we calculate
\[\rN \varepsilon^{\bT} \circ \lambda^{\nabla}\overset{(i)}{=}
\,
	\psfrag{O}[cc][cc][0.8]{$\rQ$}
	\psfrag{V}[cc][cc][0.8]{$\rV$}
	\psfrag{T}[cc][cc][0.8]{$\rT$}
    \psfrag{G}[cc][cc][0.8]{$\rG$}
    \psfrag{l}[cc][cc][0.8]{$\lambda$}
    \psfrag{i}[cc][cc][0.8]{$\tilde \Lambda$}
    \psfrag{N}[cc][cc][0.85]{$\rN$}
    \psfrag{D}[cc][cc][0.85]{$\nabla$}
	\rsdraw{0.45}{1}{chicoalg7}
\;\overset{(ii)}{=} \,
	\psfrag{O}[cc][cc][0.8]{$\rQ$}
	\psfrag{V}[cc][cc][0.8]{$\rV$}
	\psfrag{T}[cc][cc][0.8]{$\rT$}
    \psfrag{G}[cc][cc][0.8]{$\rG$}
    \psfrag{l}[cc][cc][0.8]{$\lambda$}
    \psfrag{i}[cc][cc][0.8]{$\tilde \Lambda$}
    \psfrag{N}[cc][cc][0.85]{$\rN$}
    \psfrag{D}[cc][cc][0.85]{$\nabla$}
	\rsdraw{0.45}{1}{chicoalg8}
\; \overset{(iii)}{=}\rN \varepsilon^{\bS}. \]
Here~$(i)$ follows from the
 definition of~$\lambda^{\nabla}$,~$(ii)$ follows from the fact that $(\rV, \tilde \Lambda)$ is a colax morphism of comonads from $\bQ$ to $\bT$, $(iii)$ follows from the fact that~$(\rN\rV, \nabla)$ is a $\bQ$-opcoalgebra on $\mc Z$ and since the counit of $\bS$ is the counit of adjunction $\rV \dashv \rG$.

Finally, the two constructions are in bijective correspondence by inver\-ti\-bility of~$\tilde \Lambda$ and snake relations for adjunction
$\rV\dashv\rG$. \hfill \qed

\subsection{Entwined coalgebras} \label{descent-cats}
Let $\mc A$ be a category, $\bB$ a monad on $\mathcal{A}$, $\bC$ a co\-mo\-nad on $\mc A$, and $\theta$ a mixed distributive law of $\bB$ over $\bC$.
First, consider the Eilenberg--Moore adjunction $\rF \co \mathcal A \rightleftarrows \mathcal A^{\bB} \co \rU$ for $\bB$. 
By definition, we have~$\rB=\rU\rF$, but following Section \ref{monads-and-adjunctions}, we also have that the endo\-functor~$\rF \rU$ on $\mc A^{\bB}$ is part of a comonad $\bT$ on $\mc A^{\bB}$. 
By~\cite{Joh75} (see also~\cite[Section 5.2]{W08}), one defines the comonad $\bC^{\theta}$ on $\mc A^{\bB}$ such that~$\rC \rU=\rU\rC^{\theta}$ as follows.
For any $(X, \alpha)  \in \Ob(\mc A^{\bB})$ and any morphism $f$ in $\mc A^{\bB}$, set
\begin{gather}
 \label{lift-to-EM}\rC^{\theta}(X,\alpha) = (\rC(X), \rC(\alpha_X)\theta_X), \qquad  \rC^{\theta}(f)=\rC(f), \\  
 \label{lift-to-EM2}\Delta^{\bC^{\theta}}_{(X, \alpha)} = \Delta^{\bC}_{X}, \qquad  \varepsilon^{\bC^{\theta}}_{(X, \alpha)}= \varepsilon^{\bC}_{X}. \end{gather}
That is, we can use the
identity natural
transformation~$\Omega \co
\rC \rU \Rightarrow \rU \rC^{\theta}$
to exhibit the comonad lift in the sense of Definition \ref{comonad-lift}. Explicitly,  we set~$\Omega_{(X,\alpha)} = \id_{\rC(X)}$ for all $(X,\alpha) \in \Ob(\mc A^{\bB})$ and we have the following.
\begin{lem}
\label{mate-computation1}The
mate $\Lambda \co \rF\rC \Rightarrow
\rC^{\theta}\rF$ of $\Omega= \id$ is computed by setting for all $X \in \Ob(\mc A)$,
              \[\Lambda_X= \theta_X.\]
              Moreover, the pair $(\rU, \Omega)$ is a lax morphism of comonads from $\bC$ to $\bS$ and  the comonad distributive law $\chi \co \rT \rC^\theta \Rightarrow \rC^{\theta}\rT$
 induced by the adjunction~$\rF\dashv \rU$ is computed by setting for all~$(X,\alpha) \in \Ob(\mc A^{\bB})$,
\begin{equation} \label{computation-dist-law}
\chi_{(X,\alpha)} = \theta_X. 
\end{equation}
\end{lem}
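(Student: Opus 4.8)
The plan is to evaluate the mate \eqref{mate} directly for the Eilenberg--Moore adjunction $\rF \dashv \rU$, whose unit and counit are $\eta_X = \eta^{\bB}_X$ and $\varepsilon_{(X,\alpha)} = \alpha$, and for $\Omega = \id$. Fix $X \in \Ob(\mc A)$; since $\rU$ is faithful it suffices to identify the underlying morphism in $\mc A$ of $\Lambda_X \co \rF\rC(X) \to \rC^{\theta}\rF(X)$. Applying $\rU$ to \eqref{mate} and using $\rU\rF = \rB$ gives
\[
\rU(\Lambda_X) = \varepsilon_{\rC^{\theta}\rF(X)} \circ \rB(\Omega_{\rF(X)}) \circ \rB\rC(\eta^{\bB}_X).
\]
Now $\rF(X) = (\rB(X), \mu^{\bB}_X)$, so $\Omega_{\rF(X)} = \id_{\rC\rB(X)}$ by construction, and by \eqref{lift-to-EM} we have $\rC^{\theta}\rF(X) = (\rC\rB(X),\, \rC(\mu^{\bB}_X)\theta_{\rB X})$, whence $\varepsilon_{\rC^{\theta}\rF(X)} = \rC(\mu^{\bB}_X)\theta_{\rB X}$. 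Substituting, $\rU(\Lambda_X) = \rC(\mu^{\bB}_X) \circ \theta_{\rB X} \circ \rB\rC(\eta^{\bB}_X)$; naturality of $\theta$ along $\eta^{\bB}_X \co X \to \rB(X)$ rewrites $\theta_{\rB X} \circ \rB\rC(\eta^{\bB}_X) = \rC\rB(\eta^{\bB}_X) \circ \theta_X$, and the monad unitality $\mu^{\bB} \circ \rB\eta^{\bB} = \id_{\rB}$ makes $\rC(\mu^{\bB}_X) \circ \rC\rB(\eta^{\bB}_X) = \id_{\rC\rB(X)}$, leaving $\rU(\Lambda_X) = \theta_X$. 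That $\theta_X$ really is a morphism of $\bB$-algebras $\rF\rC(X) \to \rC^{\theta}\rF(X)$ is automatic, since $\Lambda$ is a composite of morphisms in $\mc A^{\bB}$; it also follows directly from the lax-endomorphism-of-$\bB$ part of the mixed distributive law axioms for $\theta$.

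For the second and third assertions: because $\Omega = \id$ and $\rC\rU = \rU\rC^{\theta}$ as functors, the two defining equations of a lax morphism of comonads (Definition \ref{def-lax-comonad}) reduce for the pair $(\rU,\Omega)$ to $\rU\Delta^{\bC^{\theta}} = \Delta^{\bC}\rU$ and $\rU\varepsilon^{\bC^{\theta}} = \varepsilon^{\bC}\rU$, which hold componentwise by \eqref{lift-to-EM2} and the definition of the forgetful functor; together with the comonad axioms for $\bC^{\theta}$ recalled from \cite{Joh75} and \cite[Section 5.2]{W08}, this exhibits $\bC^{\theta}$ as a lift of $\bC$ through $\rF \dashv \rU$ in the sense of Definition \ref{comonad-lift}. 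Finally, by Section \ref{dist-via-adj} the induced distributive law is $\chi = \Lambda\rU \circ \rF\Omega^{-1}$, which equals $\Lambda\rU$ since $\Omega = \id$; hence for every $(X,\alpha) \in \Ob(\mc A^{\bB})$ we obtain $\chi_{(X,\alpha)} = \Lambda_{\rU(X,\alpha)} = \Lambda_X = \theta_X$, which is \eqref{computation-dist-law}.

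The only step that requires any care is the first one: one must keep straight which whiskered natural transformation is evaluated over which object in the triple composite \eqref{mate}, after which a single application of naturality of $\theta$ together with one monad unit law completes the computation. The remaining two claims are then immediate bookkeeping once $\Lambda = \theta$ (suitably whiskered) is in hand.
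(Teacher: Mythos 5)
Your proposal is correct and follows essentially the same route as the paper: evaluate the mate formula \eqref{mate} at $X$ using the Eilenberg--Moore counit $\varepsilon_{\rC^{\theta}\rF(X)}=\rC(\mu^{\bB}_X)\theta_{\rB(X)}$, then simplify via naturality of $\theta$ and the unit law of $\bB$, with the lax-morphism claim reducing via $\Omega=\id$ to \eqref{lift-to-EM2} and the distributive-law formula following from $\chi=\Lambda\rU\circ\rF\Omega^{-1}$. Your extra remarks (faithfulness of $\rU$, and that $\theta_X$ is a $\bB$-algebra morphism by the lax-endomorphism axiom for $\theta$) match the paper's opening observation, so there is nothing to correct.
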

\begin{proof} We first note that for any~$X$ in~$\mc A$, the morphism~$\theta_X$ is a morphism in~$\mc A^{\bB}$ from~$\rF(\rC (X)) $ to~$\rC^{\theta} (\rF(X)) $.  Indeed, this fact follows from the hypo\-thesis that~$(\rC, \theta)$ is a lax endomorphism of~$\bB$. 
Next, it follows from the defi\-nitions, equation~\eqref{mate}, naturality of~$\theta$, and unitality of~$\bB$, that the mate~$\Lambda$ of~$\Omega$ is computed by  
\[\Lambda_X = \varepsilon_{\rC^{\theta}(\rF(X))}\rF(\Omega_{\rF(X)})\rF(\rC(\eta_X))= \rC(\mu^{\bB}_X)\theta_{\rB(X)}\rB(\rC(\eta^{\bB}_X))= \theta_X\] for all~$X \in \Ob(\mc A)$. 
The fact that~$(\rU, \Omega)$  is a lax morphism of comonads from~$\bC$ to~$\bS$ follows since~$\Omega= \id$, by functoriality of~$\rC$, and by equation~\eqref{lift-to-EM2}.
Finally, the distributive law~$\chi \co \rT \rC^\theta \Rightarrow \rC^{\theta}\rT$
 arising from the adjunction~$\rF\dashv \rU$ is computed by setting for all~$(X,\alpha)$ in~$\mc A^{\bB}$, 
\[ 
\chi_{(X,\alpha)} = \Lambda_{\rU(X,\alpha)} \rF(\Omega^{-1}_{(X,\alpha)}) = \theta_X,
\] which proves equation \eqref{computation-dist-law}.
\end{proof}

We next consider the category~$(\mc A^{\bB})_{\bC^{\theta}}$ of coalgebras over the comonad~$\bC^{\theta}$ on $\mc A^{\bB}$, which is called the category of \emph{entwined coalgebras}. 
By Section~\ref{monads-and-adjunctions}, we have the forgetful-cofree adjunction $\rV \co (\mc A^{\bB})_{\bC^{\theta}} \rightleftarrows \mc A^{\bB} \co \rG$, whose associated como\-nad is precisely $\bC^{\theta}$. 
By \cite[Section 4.9]{W08}, one defines a comonad~$\bT^{\theta}$ on~$ (\mc A^{\bB})_{\bC^{\theta}} $ such that~$ \rV \rT^{\theta} = \rT\rV$ as follows. 
For any object~$((X, \alpha), \delta)$ and any morphism~$f$ in~$(\mc A^{\bB})_{\bC^{\theta}}$, set
\begin{gather}
\label{extension}
\rT^{\theta}((X, \alpha), \delta)=  (\rT(X,\alpha), \chi_{(X,\alpha)} \rT(\delta)) = ((\rB(X),\mu^{\bB}_X), \theta_X \rB(\delta)), 
\\  
\label{extension2}\rT^{\theta} (f) = \rT(f), \quad \Delta^{\bT^{\theta}}_{((X, \alpha), \delta)} = \Delta^{\bT}_{(X, \alpha)}, \quad  \varepsilon^{\bT^{\theta}}_{((X, \alpha), \delta)}= \varepsilon^{\bT}_{(X, \alpha)}.  \end{gather}
\begin{rem}
Beware that $\rT^{\theta}$ is not to $\rT$, what $\rC^{\theta}$ to $\rC$ is. 
While the comonad ~$\bT^{\theta}$ extends the comonad $\bT$ through $\rV \dashv \rG$, the comonad~$\bC^{\theta}$ lifts~$\bC$ through~$\rF \dashv \rU$ (see Definition \ref{comonad-lift}). 
The choice of this slightly inconsistent notation is motivated by the formulas~\eqref{lift-to-EM} and~\eqref{extension2}. 
Note that~$\bT^{\theta}$ stands for~$\bQ$ and $\bC^{\theta}$ stands for $\bS$ in the more general setup of Section \ref{coeffs}.
\end{rem}

We can use the
identity natural
transformation$~\tilde \Lambda \co \rV \rT^{\theta} \Rightarrow \rT \rV$ to exhibit the comonad extension in the sense of Definition~\ref{comonad-lift}. Explicitly, we set~$\tilde \Lambda_{((X,\alpha), \delta)} = \id_{\rT(X,\alpha)} = \id_{(\rB(X), \mu^{\bB}_X)}$  for all~$((X,\alpha), \delta)$ in~$(\mc A^{\bB})_{\bC^{\theta}}$.
We have the following lemma, which is similar to Lemma \ref{mate-computation1}.

\begin{lem} \label{mate-computation}
      The unique natural transformation $\tilde \Omega \co \rT^{\theta} \rG \Rightarrow \rG\rT$ whose mate equals~$\tilde \Lambda$ is computed by setting~\[\tilde{\Omega}_{(X,\alpha)} = \theta_X\] for all $(X,\alpha)\in \Ob(\mc A^{\bB})$.
In particular, $\tilde{ \Omega }$  implements a comonad extension~$\bT^{\theta}$ of $\bT$ if and only if the morphism~$\theta_X$ is an isomorphism from~$\rT^{\theta}(\rG (X)) $ to~$\rG (\rT(X)) $ for any $X \in \Ob(\mc A)$.

\end{lem}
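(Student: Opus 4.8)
The plan is to exploit the fact that, under the adjunction $\rV \dashv \rG$, passing to mates is a bijection between natural transformations $\rT^\theta\rG \Rightarrow \rG\rT$ and natural transformations $\rV\rT^\theta \Rightarrow \rT\rV$; hence there is at most one $\tilde\Omega$ with mate $\tilde\Lambda$, and it suffices to exhibit a candidate and check that its mate is $\tilde\Lambda$. It helps to fix the substitution of roles relative to Section~\ref{dist-via-adj}: here $\rT^\theta$, $\rT$, $\rV$, $\rG$ play the parts of $\rC$, $\rS$, $\rF$, $\rU$, so that $\tilde\Omega$ plays the part of $\Omega$ and $\tilde\Lambda$ that of its mate $\Lambda$. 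I take as candidate the family $\tilde\Omega_{(X,\alpha)} = \theta_X$ for $(X,\alpha) \in \Ob(\mc A^{\bB})$ and verify, in turn: (a) that $\theta_X$, viewed in $(\mc A^{\bB})_{\bC^\theta}$, is a morphism from $\rT^\theta(\rG(X,\alpha))$ to $\rG(\rT(X,\alpha))$; (b) that this family is natural in $(X,\alpha)$; and (c) that the resulting $\tilde\Omega$ has mate $\tilde\Lambda = \id$.

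For (a), unwinding the cofree coalgebra functor $\rG$ for $\bS = \bC^\theta$ together with \eqref{lift-to-EM}--\eqref{lift-to-EM2}, the equality $\rT = \rF\rU$ on $\mc A^{\bB}$, and formula \eqref{extension}, one obtains $\rT^\theta(\rG(X,\alpha)) = ((\rB\rC(X), \mu^{\bB}_{\rC(X)}), \theta_{\rC(X)}\rB(\Delta^{\bC}_X))$ and $\rG(\rT(X,\alpha)) = ((\rC\rB(X), \rC(\mu^{\bB}_X)\theta_{\rB(X)}), \Delta^{\bC}_{\rB(X)})$; in particular neither depends on $\alpha$, which is consistent with the candidate depending only on $X$. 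That $\theta_X$ is a $\bB$-algebra morphism $(\rB\rC(X), \mu^{\bB}_{\rC(X)}) \to (\rC\rB(X), \rC(\mu^{\bB}_X)\theta_{\rB(X)})$ is precisely the multiplicativity axiom of $(\rC, \theta)$ being a lax endomorphism of $\bB$, and that $\theta_X$ is then a $\bC^\theta$-coalgebra morphism is precisely the comultiplicativity axiom of $(\rB, \theta)$ being a colax endomorphism of $\bC$. For (b): the forgetful functors identify the action of $\rU$, $\rF$, $\rV$, $\rG$ on underlying morphisms of $\mc A$ with $\id$, $\rB$, $\id$, $\rC$ respectively, so the naturality square of the candidate at a morphism of $\mc A^{\bB}$ collapses to the naturality square of $\theta \co \rB\rC \Rightarrow \rC\rB$.

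For (c), the mate formula \eqref{mate} gives $\tilde\Lambda = \varepsilon'\rT\rV \circ \rV\tilde\Omega\rV \circ \rV\rT^\theta\eta'$, with $\eta'$ and $\varepsilon'$ the unit and counit of $\rV \dashv \rG$, so that $\eta'_{((X,\alpha),\delta)} = \delta$ and $\varepsilon'_{(X,\alpha)} = \varepsilon^{\bC^\theta}_{(X,\alpha)} = \varepsilon^{\bC}_X$. Evaluating at $((X,\alpha),\delta)$ and reading off underlying morphisms in $\mc A$, the composite becomes $\varepsilon^{\bC}_{\rB(X)} \circ \theta_X \circ \rB(\delta)$; the counit axiom $\varepsilon^{\bC}\rB \circ \theta = \rB\varepsilon^{\bC}$ of $(\rB, \theta)$ being a colax endomorphism of $\bC$ rewrites this as $\rB(\varepsilon^{\bC}_X \circ \delta)$, and counitality of the coaction $\delta$ gives $\varepsilon^{\bC}_X \circ \delta = \id_X$, whence the composite is $\id_{\rB(X)}$. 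Thus the mate of the candidate $\tilde\Omega$ is $\tilde\Lambda$, and by uniqueness of mates this $\tilde\Omega$ is the one in the statement.

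For the final assertion, by Definition~\ref{comonad-lift} the transformation $\tilde\Omega$ implements a comonad extension of $\bT$ precisely when $(\rG, \tilde\Omega)$ is a lax isomorphism of comonads from $\bT^\theta$ to $\bT$, i.e.\ a lax morphism of comonads whose underlying natural transformation is invertible. The lax-morphism-of-comonads axioms hold automatically: $\tilde\Lambda = \id$ is trivially a colax morphism of comonads from $\bT^\theta$ to $\bT$, since $\rV\Delta^{\bT^\theta} = \Delta^{\bT}\rV$ and $\rV\varepsilon^{\bT^\theta} = \varepsilon^{\bT}\rV$ by \eqref{extension2}, and Remark~\ref{rem-mates} applied to $\rV \dashv \rG$ then gives that $(\rG, \tilde\Omega)$ is a lax morphism of comonads. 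So the only remaining condition is invertibility of $\tilde\Omega$; a natural transformation valued in $(\mc A^{\bB})_{\bC^\theta}$ is a natural isomorphism iff each of its components is, and since a morphism of $\bC^\theta$-coalgebras (resp.\ of $\bB$-algebras) is an isomorphism exactly when its underlying morphism in $\mc A$ is, this amounts to asking that $\theta_X$ be an isomorphism in $\mc A$, i.e.\ that the morphism $\tilde\Omega_{(X,\alpha)} = \theta_X$ from $\rT^\theta(\rG(X,\alpha))$ to $\rG(\rT(X,\alpha))$ be invertible, which is the condition in the statement. All the computations involved are routine; the delicate part is the unwinding in (a) and (c) — tracking how $\rG$ and $\rT^\theta$ act on objects so that the mate in (c) visibly collapses to the counit axiom of $(\rB, \theta)$ — and, more mundanely, keeping the abstract roles of Section~\ref{dist-via-adj} matched to the concrete functors.
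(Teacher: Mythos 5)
Your argument is correct and takes essentially the same route as the paper: both rest on the bijectivity of the mate correspondence for $\rV \dashv \rG$ together with a short computation using the unit/counit of that adjunction and the mixed-distributive-law axioms — the paper computes $\tilde\Omega$ from $\tilde\Lambda=\id$ via the inverse mate formula and simplifies by naturality of $\theta$ and counitality, while you verify that the candidate $\theta$ has mate $\id$ and invoke uniqueness, which is the same calculation read in the opposite direction. Your extra care with the ``in particular'' clause (that $(\rG,\tilde\Omega)$ is automatically a lax morphism of comonads, via $(\rV,\id)$ being colax and Remark~\ref{rem-mates}) is correct and is detail the paper only spells out later, in the proof of Corollary~\ref{desc-cat-cor}.
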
 

\begin{proof}
We first note that for any~$X \in \Ob(\mc A)$, the morphism~$\theta_X$ is a morphism in~$(\mc A^{\bB})_{\bC^{\theta}}$ from~$\rT^{\theta}(\rG (X)) $ to~$\rG (\rT(X)) $.  Indeed, this fact follows from the hypothesis that~$(\rB, \theta)$ is a colax endomorphism of~$\bC$. Next, denote by $\varepsilon'$ and $\eta'$ the counit and unit of the adjunction $\rV\dashv \rG$. 
By equation \eqref{mate} and by snake relations \eqref{snake-relations}, we have for any~$(X, \alpha) \in \Ob(\mc A^{\bB})$, 
 \[\tilde \Omega_{(X,\alpha)} = \rG (\rT (\varepsilon'_{(X,\alpha)}))\rG (\tilde \Lambda_{\rG(X,\alpha)}) \eta'_{\rT^{\theta}(\rG (X, \alpha))}.\]
By definitions and functoriality of $\rG$, it follows that 
\[\tilde{\Omega}_{(X,\alpha)} = \rC (\rB(\varepsilon^{\bC}_X))\theta_{\rC(X)}\rB(\Delta_X^{\bC}),\]
 which by naturality of $\theta$ and counitality of $\bC$ implies the statement.
 \end{proof}
 
The following diagram presents the above described situation:
\begin{equation}  \label{setup-desc-cats} 
\begin{gathered}
\xymatrix@C=7mm@R=5mm{
 & & (\mc A^{\bB})_{\bC^{\theta}} \ar@{}[rrdd]^(.25){}="a"^(.75){}="b" \ar@{=>}^-{\tilde \Omega} "a";"b" \ar@/^-3mm/[dd]_\rV  \ar[rr]^{\rT^{\theta}}   \ar@{}[dd]|{\dashv}
 & & (\mc A^{\bB})_{\bC^{\theta}} \ar@/^-3mm/[dd]_\rV \ar@{}[dd]|{\dashv} && 
\\\\
	\mc Y \ar[rr]^{\rM} & & \mc A^{\bB}  \ar@/^-3mm/[uu]_\rG   \ar@/^3mm/[dd]^\rU \ar@{}[dd]|{\dashv} \ar@<0.4ex>[rr]^{\rT} \ar@<-0.4ex>[rr]_{\rC^{\theta}}
 & &\mc A^{\bB} \ar@/^-3mm/[uu]_\rG  \ar@/^3mm/[dd]^\rU \ar@{}[dd]|{\dashv}  \ar[rr]^{\rN} && \mc Z.
\\\\
                        & & \mc A  \ar@/^3mm/[uu]^\rF \ar@<0.4ex>[rr]^{\rB}  \ar@{}[rruu]^(.25){}="a"^(.75){}="b" \ar@{=>}_-{\Omega} "a";"b"  \ar@<-0.4ex>[rr]_{\rC}                   & & \mc A 
\ar@/^3mm/[uu]^\rF &&}
\end{gathered}
\end{equation}
We have the following corollary of Theorem \ref{helpful}.

 \begin{cor} \label{desc-cat-cor}
If the mixed distributive law $\theta$ is invertible, then the left $\chi$-coalgebra structures on 
$\rN $ correspond bijectively to~$\mathbb{T}^{\theta}$-opcoalgebra structures on $\rN\rV$.
\end{cor}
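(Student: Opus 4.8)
The plan is to realize diagram~\eqref{setup-desc-cats} as an instance of the setup of Theorem~\ref{helpful}, check its three hypotheses, verify the identity $\chi = \psi^{-1}$, and then quote Corollary~\ref{immediate}. First I would match up the data: here $\mc B = \mc A^{\bB}$, the lifted comonad $\bS$ is $\bC^\theta$ (Lemma~\ref{mate-computation1}), and for the adjunction $\rV \co \mc C \rightleftarrows \mc B \co \rG$ I take the forgetful--cofree adjunction $\rV \co (\mc A^{\bB})_{\bC^\theta} \rightleftarrows \mc A^{\bB} \co \rG$, whose associated comonad is precisely $\bC^\theta$; this gives hypothesis~(1). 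For hypothesis~(2) I take $\bQ = \bT^\theta$ together with the natural transformation $\tilde\Omega$ of Lemma~\ref{mate-computation}, which has components $\tilde\Omega_{(X,\alpha)} = \theta_X$: since $\theta$ is assumed invertible, each $\theta_X$ is an isomorphism, and Lemma~\ref{mate-computation} then asserts precisely that $\tilde\Omega$ implements a comonad extension of $\bT$ through $\rV \dashv \rG$. Hypothesis~(3) is immediate: by construction of $\bT^\theta$ one has $\rV\rT^\theta = \rT\rV$ with matching comultiplication and counit (see~\eqref{extension2}), so the mate $\tilde\Lambda \co \rV\rT^\theta \Rightarrow \rT\rV$ is the identity, hence a strict (in particular lax) isomorphism of comonads.

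It then remains to identify $\chi$ with $\psi^{-1}$ so that Corollary~\ref{immediate} applies rather than only Theorem~\ref{helpful}. Both are natural transformations $\rT\rC^\theta \Rightarrow \rC^\theta\rT$, so I would compare them componentwise on an object $(X,\alpha) \in \Ob(\mc A^{\bB})$: by~\eqref{computation-dist-law} the left-hand side is $\chi_{(X,\alpha)} = \theta_X$, while formula~\eqref{chiinv} in the proof of Theorem~\ref{helpful} gives $\psi^{-1} = \rV\tilde\Omega \circ \tilde\Lambda^{-1}\rG$, which collapses to $\rV\tilde\Omega$ because $\tilde\Lambda = \id$, and whose component at $(X,\alpha)$ is again $\theta_X$ by Lemma~\ref{mate-computation}. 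Hence $\chi = \psi^{-1}$, and Corollary~\ref{immediate} yields the asserted bijection between left $\chi$-coalgebra structures on $\rN$ and $\bT^\theta$-opcoalgebra structures on $\rN\rV$.

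I expect the only genuinely delicate point to be the bookkeeping between the two twists: $\bC^\theta$ is a \emph{lift} of $\bC$ along $\rF \dashv \rU$ implemented by $\Omega = \id$ with mate $\Lambda = \theta$, whereas $\bT^\theta$ is an \emph{extension} of $\bT$ along $\rV \dashv \rG$ implemented by $\tilde\Omega = \theta$ with mate $\tilde\Lambda = \id$, so the roles of ``the identity transformation'' and ``$\theta$'' are swapped between the two adjunctions. It is precisely the invertibility of $\theta$ that makes $\tilde\Omega$ a natural \emph{iso}morphism, which is what both Lemma~\ref{mate-computation} and the invertibility of $\psi$ require. Once this is kept straight, the corollary follows directly from Lemmas~\ref{mate-computation1} and~\ref{mate-computation} and from Corollary~\ref{immediate}, with essentially no further computation.
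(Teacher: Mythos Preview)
Your proposal is correct and follows essentially the same route as the paper's proof: both verify the hypotheses of Theorem~\ref{helpful} using that $\tilde\Lambda = \id$ (so via \eqref{extension2} it is trivially a colax isomorphism of comonads, whence by Remark~\ref{rem-mates} $(\rG,\tilde\Omega)$ is lax), invoke Lemma~\ref{mate-computation} to get $\tilde\Omega_{(X,\alpha)} = \theta_X$ an isomorphism, compute $\psi^{-1}_{(X,\alpha)} = \theta_X = \chi_{(X,\alpha)}$ via Lemma~\ref{mate-computation1}, and conclude by Corollary~\ref{immediate}. The only cosmetic difference is that the paper phrases hypothesis~(3) as ``$(\rV,\tilde\Lambda)$ colax'' and then passes to ``$(\rG,\tilde\Omega)$ lax'' via Remark~\ref{rem-mates}, whereas you appeal directly to Lemma~\ref{mate-computation} for the extension; both amount to the same verification.
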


\begin{proof}
By using Lemma \ref{mate-computation} and the invertibility of $\theta$, we have that the  diagram 
 \[
	\xymatrix{\cA^{\bB} \ar[d]_\rG \ar[r]^\rT & \cA^{\bB} \ar[d]^\rG\\
		(\cA^{ \bB})_{\bC^{\theta}} \ar@{}[ru]^(.25){}="a"^(.75){}="b" \ar@{=>}^-{\tilde \Omega} "a";"b" \ar[r]_{\rT^{\theta}} & (\cA^{ \bB})_{\bC^{\theta}}}
	\]
 commutes up to the natural isomorphism ~$\tilde \Omega$.  
It follows from the definition of~$\Lambda$, functoriality, and equation~\eqref{extension2} that~$(\rV, \tilde \Lambda)$ is a colax morphism of comonads from~$\bT^{\theta}$ to~$\bT$. 
By using Remark \ref{rem-mates}, this implies that $(\rG, \tilde \Omega)$ is a lax morphism of comonads from $\bT^{\theta}$ to $\bT$.
Finally, since~$\tilde \Lambda $ is  an isomorphism, the induced distributive law~$\psi \co \rC^{\theta} \rT \Rightarrow \rT\rC^{\theta}$, which is  computed by~$\psi=\tilde \Lambda \rG \circ \rV \tilde \Omega ^{-1}$  (see Section~\ref{dist-via-adj}),  is an isomorphism. 
 By using Lemma~\ref{mate-computation}, its inverse is computed by setting for any~$(X,\alpha) \in \Ob(\mc A^{\bB})$, \[\psi^{-1}_{(X,\alpha)}= \rV\tilde \Omega_{(X,\alpha)} \tilde \Lambda^{-1}_{\rG(X,\alpha)} = \theta_X.\]
By Lemma~\ref{mate-computation1}, we have that $\chi = \psi^{-1}$. The conclusion follows by application of Corollary \ref{immediate}.
\end{proof}

\subsection{Generalized Hopf module
theorems}
\label{generalised-hopf-thm} Assume now
that we are in the setup of Section~\ref{descent-cats}. 
An even simpler classification of
left~$\chi$-coalgebra structures on
$\rN$ is possible in the case when
the category $\mc A$ is equivalent
to the cate\-gory $(\mc
A^{\bB})_{\bC^{\theta}}$ of entwined
coalgebras. 
To this end, we recall a result from~\cite{Me08} (see also~\cite{MW10} and \cite{MW11}). 
Let~$\rK \co \mc A \to (\mc
A^{\bB})_{\bC^{\theta}} $ be a
functor such that the following
diagram (strictly) commutes
\[\xymatrix@R+1pc@C+1pc{&{\mc A^{\bB}}   \\
	{\mc A} \ar[rr]_{\rK} \ar[ur]^{\rF}& & (\mc A^{\bB})_{\bC^{\theta}}. \ar[ul]_{\rV}
}\] 
In other words, we have for any $X \in \Ob(\mc A)$, 
\[\rK(X)= (\rF(X), \gamma_X), \]  
 where~$\gamma \co \rF \Rightarrow \rC^{\theta}\rF$ is a natural transformation giving the functor $\rF$ a coalgebra structure over  $\rC^{\theta}$. 
The latter induces the \emph{Galois map} $\kappa$ associated to $\rK$ and the adjunction $\rF\dashv \rU$, which is a natural transformation from~$\rT$ to~$\rC^{\theta}$ defined by  \[\kappa = \rC^{\theta}\varepsilon \circ \gamma \rU \co \rT \Rightarrow \rC^{\theta}.\] 
We recall the following result obtained by combining \cite[Theorem 4.4]{Me08} with \cite[Proposition 2.5]{MW14}.
\begin{thm} \label{generalised-hopf-thm-mw} The functor $\rK$ is an equivalence if and only if
\begin{enumerate}
  \item the natural transformation $\kappa \rF$ is an isomorphism and
    \item \label{beck}the functor~$\rF$ is comonadic, that is,  the functor~$\rL\co \mc A \to (\mc A^{\bB})_{\bT}$, which is the unique functor such that $\rV\rL=\rF$ and $\rL\rU=\rG$, is an equivalence.
\end{enumerate}  
\end{thm}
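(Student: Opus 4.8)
The plan is to prove Theorem~\ref{generalised-hopf-thm-mw} by reducing it to the two cited results, keeping careful track of how the hypotheses translate across the diagram. First I would recall the content of \cite[Theorem 4.4]{Me08}: in the setting of a mixed distributive law $\theta$ with the induced comonad $\bC^\theta$ on $\mc A^\bB$, a coalgebra structure $\gamma \co \rF \Rightarrow \rC^\theta \rF$ (equivalently, the functor $\rK$ lifting $\rF$ through $\rV$) produces the Galois map $\kappa \co \rT \Rightarrow \rC^\theta$, and $\rK$ is an equivalence precisely when $\kappa$ is an isomorphism of comonads \emph{and} $\rF$ is comonadic in the appropriate sense; here the comonadicity is phrased via the functor $\rL \co \mc A \to (\mc A^\bB)_{\bT}$. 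The only genuine work is then to verify that the hypothesis stated as ``$\kappa \rF$ is an isomorphism'' in our Theorem~\ref{generalised-hopf-thm-mw} is equivalent to ``$\kappa$ is an isomorphism'' as needed by \cite{Me08}; this is exactly where \cite[Proposition 2.5]{MW14} enters, which says that for a comonad morphism arising this way, invertibility of $\kappa$ is detected after whiskering with the left adjoint $\rF$ (because $\rF$ is, in the relevant examples, ``dense'' or the unit components suffice to test isomorphy).

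Concretely, the steps I would carry out are: (1) spell out that $\rK$ strictly commuting over $\rV$ is the same datum as a $\bC^\theta$-coalgebra structure $\gamma$ on $\rF$, using the Eilenberg--Moore description of $(\mc A^\bB)_{\bC^\theta}$ recalled in Section~\ref{monads-and-adjunctions}; (2) record that $\kappa = \rC^\theta \varepsilon \circ \gamma \rU$ is automatically a morphism of comonads $\bT \Rightarrow \bC^\theta$, which is a short diagram chase from the coalgebra axioms for $\gamma$ and the triangle identities for $\rF \dashv \rU$; (3) invoke \cite[Theorem 4.4]{Me08} to get that $\rK$ is an equivalence iff $\kappa$ is invertible and the comparison functor $\rL$ is an equivalence; (4) apply \cite[Proposition 2.5]{MW14} to replace ``$\kappa$ invertible'' by ``$\kappa \rF$ invertible'', and identify the comonadicity condition of \cite{Me08} with condition~\eqref{beck} as stated, i.e.\ with $\rL$ being an equivalence. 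The statement of the theorem is then just the conjunction of these reformulated conditions.

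The main obstacle I expect is step~(4): making precise why invertibility of $\kappa$ is equivalent to invertibility of $\kappa \rF$. This is not formal for an arbitrary adjunction — one needs that the family of objects $\{\rF(X) : X \in \Ob(\mc A)\}$ is ``large enough'' inside $\mc A^\bB$ to detect isomorphisms of natural transformations out of $\rT = \rF\rU$. In the Eilenberg--Moore setting this holds because every $\bB$-algebra is a (reflexive) coequalizer of free ones and both $\rT$ and $\rC^\theta$ preserve the relevant colimits, or, more directly, because the counit $\varepsilon$ is a split epimorphism componentwise after applying $\rU$; this is precisely the content imported from \cite[Proposition 2.5]{MW14}, so in the write-up I would cite it rather than reprove it, but I would flag that this is the step where the Eilenberg--Moore hypothesis (as opposed to an arbitrary adjunction for $\bB$) is actually used. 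The remaining verifications — that $\kappa$ is a comonad morphism, and the bookkeeping identifying $\rL$ with the comparison functor to $(\mc A^\bB)_{\bT}$ — are routine diagram chases with the graphical calculus of Section~\ref{graphical-calc}.
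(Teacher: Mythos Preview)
Your proposal is correct and follows exactly the route the paper takes: the paper does not give a proof at all but simply states the theorem as the result ``obtained by combining \cite[Theorem 4.4]{Me08} with \cite[Proposition 2.5]{MW14}'', which is precisely your steps~(3) and~(4), while your steps~(1) and~(2) are already recorded in the paragraph preceding the theorem. Your write-up is more detailed than the paper's bare citation, but the strategy is identical.
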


\begin{rem}
Note that the category~$(\mc A^{\bB})_{\bT}$ appears under the name \emph{descent category} in~\cite{Ba12}, or \emph{category of descent data with respect to the monad}~$\bB$ in~\cite{Me06}. 
\end{rem}

If~$\rK$ is an equivalence with a quasi-inverse $\bar \rK$, then $\rK \dashv \bar \rK$. If~$\xi$ is the counit and~$\phi$ the unit of the adjunction, then the conjugate comonad~$\overline {\mathbb{T}^{\theta}}$ on~$\mc A$ of the comonad~$\bT^{\theta}$ on~${(\mc A^{\bB})_{\bC^{\theta}}} $ (see Section \ref{xmas-tree}) is defined by
\[\overline {\mathbb{T}^{\theta}} = (\bar{\rK} \rT^{\theta} \rK, \bar \rK \rT^{\theta}  \xi^{-1} \rT^{\theta} \rK \circ \bar \rK \Delta^{\mathbb{T}^{\theta}} \rK, {\phi}^{-1} \circ \bar \rK \varepsilon^{\mathbb{T}^{\theta}}\rK  ).\] 
By combining Corollary  \ref{desc-cat-cor} with Corollary \ref{conjugate-general}, we obtain the fol\-lo\-wing.
\begin{cor} \label{conjugate} If the mixed distributive law $\theta$ is invertible and if $\rK$ is an equivalence, then (in the above notation) the left $\chi$-coalgebras on $\rN$ correspond to $\overline {\bT^{\theta}}$-opcoalgebras on~$\rN\rF$. 
\end{cor}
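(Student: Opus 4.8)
The plan is to obtain this by feeding the conclusion of Corollary~\ref{desc-cat-cor} into Corollary~\ref{conjugate-general}. First I would note that the hypothesis that $\theta$ is invertible places us in the situation of Corollary~\ref{desc-cat-cor}: by Lemma~\ref{mate-computation} the natural transformation $\tilde\Omega$ with $\tilde\Omega_{(X,\alpha)} = \theta_X$ implements a comonad extension $\bT^{\theta}$ of $\bT$ through $\rV \dashv \rG$ precisely because every $\theta_X$ is an isomorphism, its mate $\tilde\Lambda = \id$ is trivially a colax and hence by Remark~\ref{rem-inv} also a lax isomorphism of comonads, so that conditions (1)--(3) of Theorem~\ref{helpful} hold with $\mc C = (\mc A^{\bB})_{\bC^{\theta}}$ and $\bQ = \bT^{\theta}$. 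Consequently the induced distributive law $\psi \co \rC^{\theta}\rT \Rightarrow \rT\rC^{\theta}$ is an isomorphism, and combining the objectwise formulas of Lemma~\ref{mate-computation1} and Lemma~\ref{mate-computation} gives $\chi_{(X,\alpha)} = \theta_X = \psi^{-1}_{(X,\alpha)}$, that is, $\chi = \psi^{-1}$; all of this is exactly what is already carried out inside the proof of Corollary~\ref{desc-cat-cor}.

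Next I would observe that the remaining hypothesis, namely that $\rK \co \mc A \to (\mc A^{\bB})_{\bC^{\theta}}$ is an equivalence for which the triangle $\rV\rK = \rF$ commutes, is precisely the extra data assumed at the start of Section~\ref{xmas-tree}. Choosing a quasi-inverse $\bar\rK$ and writing $\xi$, $\phi$ for the counit and unit of $\rK \dashv \bar\rK$, the conjugate comonad $\overline{\bQ}$ of Section~\ref{xmas-tree} becomes exactly the comonad $\overline{\bT^{\theta}}$ on $\mc A$ displayed before the statement.

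Finally I would invoke Corollary~\ref{conjugate-general} for the setup just assembled: since $\chi = \psi^{-1}$ by the first step, that corollary applies verbatim and yields a bijective correspondence between left $\chi$-coalgebra structures on $\rN$ and $\overline{\bT^{\theta}}$-opcoalgebra structures on $\rN\rV\rK = \rN\rF$, which is the assertion. I expect no real obstacle here: the whole argument is a matching up of hypotheses, and the one substantive ingredient — that the distributive laws attached to the top and bottom rows of diagram~\eqref{setup-desc-cats} are genuinely mutually inverse, so that $\chi = \psi^{-1}$ — has already been established in Corollary~\ref{desc-cat-cor}, so it only needs to be cited rather than reproved.
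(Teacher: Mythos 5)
Your proposal is correct and follows essentially the same route as the paper, whose proof of this corollary consists precisely of combining Corollary~\ref{desc-cat-cor} (including the identity $\chi=\psi^{-1}$ established in its proof) with Corollary~\ref{conjugate-general} applied to the equivalence $\rK$ with $\rV\rK=\rF$. Your additional remarks merely spell out the hypothesis-matching that the paper leaves implicit.
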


\subsection{An example from bimonads} \label{bimonads-mw}

In this section, we apply the theory described in Sections~\ref{descent-cats} and~\ref{generalised-hopf-thm} to bimonads in the sense of~\cite{MW11} and opmonoidal monads (called bimonads in~\cite{BLV11} and Hopf monads in~\cite{Moerdijk}).

\begin{defn} \label{bimonad-mw}
A \emph{bimonad} on $\mc A$ (in the sense of \cite{MW11}) is a sextuple~$\bB=(\rB, \mu^{\bB}, \eta^{\bB}, \Delta^{\bB}, \varepsilon^{\bB}, \theta)$, where $\bB=(\rB,  \mu^{\bB}, \eta^{\bB})$ is a monad,~$\bC=(\rB,\Delta^{\bB}, \varepsilon^{\bB})$ is a comonad and the natural transformation~$\theta \co \bB\bC \Rightarrow \bC\bB$ is a mixed distributive law of~$\bB$ over~$\bC$ such that~$\varepsilon^{\bB} \circ \mu^{\bB} =\varepsilon^{\bB} \circ \varepsilon^{\bB} \rB$,~$\Delta^{\bB} \circ \eta^{\bB} = \eta^{\bB} \rB \circ \eta^{\bB}$,~$\varepsilon^{\bB}  \circ \eta^{\bB} = \id_{\mc A}$, and 
\begin{equation}\label{comp-mixed}
  \Delta^{\bB}\circ \mu^{\bB} = \rB \mu^{\bB} \circ \theta \rB \circ \rB \Delta^{\bB}.
\end{equation} 
\end{defn}

\begin{rem} In particular, any bimonad~$\bB$ on~$\mc A$ in the sense of the above definition satisfies~$\theta\circ \eta^{\bB}\rC= \rC \eta^{\bB}$ and~$\varepsilon^{\bB} \rB \circ \theta= \rB\varepsilon^{\bB}$. Besides, we note that it follows from axioms for $\bB$ that $\Delta^{\bB} = \theta \circ \rB\eta^{\bB}$ and $\mu^{\bB} = \rC\varepsilon^{\bB} \circ \theta$.
\end{rem}

Let $\bB=(\rB, \mu^{\bB}, \eta^{\bB}, \Delta^{\bB}, \varepsilon^{\bB}, \theta)$ be a bimonad in the sense of Definition~\ref{bimonad-mw}. The como\-nad $\bC=(\rB,\Delta^{\bB}, \varepsilon^{\bB})$ then lifts to a comonad~$\bC^{\theta}$ on~$\mc A^{\bB}$ (see equations~\eqref{lift-to-EM} and~\eqref{lift-to-EM2}). If~$\theta$ is invertible, then the comonad~$\bT$ on~$\mc A^{\bB}$ induced by the Eilenberg--Moore adjunction $\rF\dashv \rU$  is a lift of the comonad~$\bT^{\theta}$ on~$(\mc A^{\bB})_{\bC^{\theta}}$ (see equations~\eqref{extension} and~\eqref{extension2}). In this case, according to Corollary~\ref{desc-cat-cor}, the left $\chi$-coalgebras over a functor $\rN \co \mc A^{\bB} \to \mc Z$ correspond bijectively to $\bT^{\theta}$-opcoalgebras over $\rN\rV$. 
Following \cite[Section 4.3]{MW11}, consider a functor~$\rK \co \mc A \to (\mc A^{\bB})_{\bC^{\theta}}$, given by setting for any object $X$ and any morphism~$f$  in~$\mc A$, \[\rK(X)= (\rF(X), \Delta_X^{\bB})= ((\rB(X),\mu^{\bB}_X), \Delta_X^{\bB}) \quad \text{and} \quad \rK(f)= \rF(f).\] 
Note that it follows from equation~\eqref{comp-mixed} that for any~$X \in \Ob( \mc A)$, the morphism~$\Delta_X^{\bB}$ is a morphism from the algebra~$\rF(X)$ to  the algebra~$\rC^{\theta}\rF(X)$. 
Thus the functor~$\rK$ is well-defined and induces a coalgebra $(\rF, \gamma)$ over $\bC^{\theta}$ by setting~$\gamma_X= \Delta_X^{\bB}$ for all~$X \in \Ob(\mc A)$. 
We clearly have $\rV \rK = \rF$ and the theory from Section~\ref{generalised-hopf-thm} applies. 
The Galois map associated to $\rK$ and~$\rF \dashv \rU$ is given by setting for all $(X,\alpha) \in \Ob(\mc A^{\bB})$, \[ \kappa_{(X,\alpha)}= \rB(\alpha)\Delta_X^{\bB} \co (\rB(X), \mu^{\bB}_X) \to (\rB(X), \rB(\alpha)\theta_X).\] 
We recall from Theorem \ref{generalised-hopf-thm-mw} that the functor $\rK$ is an equivalence if and only if the natural transformation $\kappa\rF$ is an isomorphism and if the free functor~$\rF$ is comonadic.  
Finally, if~$\theta$ is invertible and if~$\rK$ is an equivalence, then Corollary~\ref{conjugate} is applicable and one gets a simpler classification of the left~$\chi$-coalgebras on $\rN$.  

\begin{rem} \label{hopf-monad-mw}
We note that a bimonad~$\bB=(\rB, \mu^{\bB}, \eta^{\bB}, \Delta^{\bB}, \varepsilon^{\bB}, \theta)$ on $\mc A$ such that $\kappa \rF$ is a natural isomorphism in the above sense is called a \emph{Hopf monad} in \cite{MW11}. 
This is different than notion of a Hopf monad in the sense of~\cite{BLV11}, which is considered in the next section.
\end{rem}

\subsection{An example from opmonoidal monads} \label{bimonads-blv} In this section, we outline (following~\cite[Section 5]{MW10}) a particular case of the theory described in Sections~\ref{descent-cats} and \ref{generalised-hopf-thm} in the setting of monoidal categories. 
An \emph{opmonoidal monad} (see~\cite{McC02}, or~\cite{Moerdijk, BLV11} under different names) $\bB$ on a monoidal category $(\mc A, \tens, \uu)$ is an opmonoidal functor and a monad on~$\mc A$ such that~$\mu^\bB$ and $\eta^\bB$ are opmonoidal natural transformations. 
This means in particular that the functor~$\rB$ comes with opmonoidal constraints
\begin{align*}
   \rB_2&= \{\rB_2(X,Y) \co \rB(X\tens Y) \to \rB(X)\tens \rB(Y)\}_{X,Y\in \Ob( \mc A)} \quad \text{and} \\
   \rB_0&\co \rB(\uu) \to \uu\ 
\end{align*}
satisfying several axioms. For example, one has for any $X,Y \in \Ob(\mc A)$,
\begin{equation}\label{opmonoidality-mult} \rB_2(X, Y)\mu^{\bB}_{X\tens Y} = (\mu^{\bB}_X \tens \mu^{\bB}_Y)\rB_2(\rB(X),\rB(Y)) \rB(\rB_2(X,Y)).\end{equation}
Following \cite{BLV11}, an opmonoidal monad $\bB$ gives rise to natural transformations
\begin{align*}
  H^l= & \{H^l_{X,Y} \co \rB(X \tens \rB(Y)) \to \rB(X) \tens \rB(Y)\}_{X,Y \in \Ob(\mc A)} \quad \text{and} \\
  H^r= & \{H^r_{X,Y} \co \rB(\rB(X) \tens Y) \to \rB(X) \tens \rB(Y) \}_{X,Y \in \Ob(\mc A)},
\end{align*}
which are called \emph{left} and \emph{right fusion operators}  and are respectively defined by setting for any $X,Y \in \Ob(\mc A)$,
\begin{align*}
  H^l_{X,Y}=& (\id_{\rB(X)} \tens \mu^{\bB}_Y)\rB_2(X,\rB(Y)), \quad \text{and} \\
  H^r_{X,Y}= & (\mu^{\bB}_X \tens \id_{\rB(Y)})\rB_2(\rB(X),Y).
\end{align*}

\begin{defn} A \emph{left} (respectively, a \emph{right}) \emph{pre-Hopf monad} on $\mc A$ is an opmonoidal monad on $\mc A$  such that~$ H^l_{\uu,-}$ (respectively,  $ H^r_{\uu,-}$ ) is an isomorphism. 
A \emph{pre-Hopf monad} on $\mc A$  is a left and a right pre-Hopf monad. A \emph{Hopf monad }on $\mc A$ is an opmonoidal monad on $\mc A$ such that both left and right fusion operators are isomorphisms. 
\end{defn}

For more details and algebraic properties of Hopf monads in the sense of the above definition,  see~\cite{BLV11}.
Now let $\bB$ be an opmonoidal monad on a monoidal category $(\mc A, \tens, \uu)$  and let $\bC$ be a comonad on~$\mc A$ given by setting  for all $X\in \Ob(\mc A)$  and every morphism $f$ in $\mc A$,
\begin{gather*}\rC(X)= \rB(\uu) \tens X, \quad \rC(f) = \id_{\rB(\uu)} \tens f, \\ 
\Delta^{\bC}_X=  \rB_2(\uu,\uu) \tens X, \quad \varepsilon^{\bC}_X= \rB_0 \tens \id_X. \end{gather*}
Following \cite[Section 5]{MW10}, we define a mixed distributive law of $\bB$ over~$\bC$ by setting for $X\in \Ob(\mc A)$, 
\[\theta_X = (\mu^{\bB}_{\uu}\tens \id_{\rB(X)})\rB_2(\rB(\uu),X) \co \rB\rC(X) \to \rC\rB(X).\] 
Using equations \eqref{lift-to-EM} and \eqref{lift-to-EM2} and the above formulas we have that the lift~$\bC^{\theta}$ of $\bC$ to $\cA^{\bB}$ is given by setting for any $(X, \alpha) \in \Ob (\mc A^{\bB})$ and any morphism $f$ in~$\mc A^{\bB}$,
\begin{gather}
\rC^{\theta}(X,\alpha) = (\rB(\uu) \tens X, (\mu^{\bB}_{\uu}\tens\alpha_X)\rB_2(\rB(\uu),X)), 
 \\  \rC^{\theta}(f)=\rC(f), \qquad \Delta^{\bC^{\theta}}_{(X, \alpha)} = \Delta^{\bC}_{X},  \qquad  \varepsilon^{\bC^{\theta}}_{(X, \alpha)}= \varepsilon^{\bC}_{X}. \end{gather}
Remark that~$\rC^{\theta}(X,\alpha)$ is  the monoidal product of $\bB$-algebras~$(\rB(\uu), \mu^{\bB}_\uu)$ and~$(X, \alpha)$ (see~\cite[Proposition 1.4]{Moerdijk}).
From Section \ref{descent-cats} and particu\-larly Lemma \ref{mate-computation1}, we conclude that the natural isomorphism~$\Omega \co \rC\rU \Rightarrow \rU\rC^{\theta}$ implementing the lift~$\bC^{\theta}$ of $\bC$, its mate~$\Lambda \co \rF\rC \Rightarrow \rC^{\theta}\rF$, and the distributive law~$\chi\co \rT \rC^{\theta} \Rightarrow \rC^{\theta}\rT$ induced by~$\Omega$ and $\Lambda$ are given by the formulas 
\begin{gather*}\Omega_{(X,\alpha)}= \id_{\bB(\uu) \tens X}, \quad \Lambda_X= (\mu^{\bB}_{\uu}\tens \id_{\rB(X)})\rB_2(\rB(\uu),X), \\ \chi_{(X,\alpha)}= (\mu^{\bB}_{\uu}\tens \id_{\rB(X)})\rB_2(\rB(\uu),X).\end{gather*}
Following equations~\eqref{extension} and~\eqref{extension2}, the comonad~$\bT^{\theta}$ on~$ (\mc A^{\bB})_{\bC^{\theta}} $ such that $ \rV \rT^{\theta} = \rT\rV$ is given by
\begin{gather}
\rT^{\theta}((X, \alpha), \delta)=  ((\rB(X),\mu^{\bB}_X), (\mu^{\bB}_{\uu}\tens \id_{\rB(X)})\rB_2(\rB(\uu),X)\rB(\delta)), 
\\  \rT^{\theta} (f) = \rT(f), \quad \Delta^{\bT^{\theta}}_{((X, \alpha), \delta)} = \Delta^{\bT}_{(X, \alpha)}, \quad  \varepsilon^{\bT^{\theta}}_{((X, \alpha),\delta)}= \varepsilon^{\bT}_{(X, \alpha)}.  \end{gather}

As in Section~\ref{descent-cats}, we use the identity natural isomorphism~$\tilde \Lambda \co \rF \rC \Rightarrow \rC^{\theta} \rF$ to exhibit an extension $\bT^{\theta}$ of $\bT$. The latter is explicitly given by  the formula $\tilde \Lambda_{((X,\alpha),\delta)} = \id_{(\rB(X), \mu^{\bB}_X)}$. 
According to Lemma~\ref{mate-computation}, the natural transformation $\tilde \Omega \co \rT^{\theta} \rG \Rightarrow \rG\rT$ whose mate equals to $\tilde \Lambda$ is computed by the formula $\tilde \Omega_{(X,\alpha)}= (\mu^{\bB}_{\uu}\tens \id_{\rB(X)})\rB_2(\rB(\uu),X)$.

\begin{cor} \label{desc-cat-cor-opmonoidal}
If~$\bB$ is a right pre-Hopf monad, then the natural transformation~$\tilde \Omega$ implements an extension~$\bT^{\theta}$ of~$\bT$. 
In that case, the left $\chi$-coalgebra structures on 
$\rN $ correspond bijectively to~$\mathbb{T}^{\theta}$-opcoalgebra structures on $\rN\rV$.
\end{cor}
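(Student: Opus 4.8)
The plan is to reduce both assertions to results already obtained in Section~\ref{descent-cats}, the key observation being that the mixed distributive law~$\theta$ used here is nothing but the right fusion operator of~$\bB$ based at the monoidal unit. First I would compare the explicit formulas: in this section $\theta_X = (\mu^{\bB}_{\uu}\tens \id_{\rB(X)})\rB_2(\rB(\uu),X)$, while the right fusion operator is $H^r_{X,Y}= (\mu^{\bB}_X \tens \id_{\rB(Y)})\rB_2(\rB(X),Y)$; specialising $X = \uu$ in the latter gives $\theta_X = H^r_{\uu,X}$ for every $X \in \Ob(\mc A)$. Hence $\theta$ is a natural isomorphism precisely when $H^r_{\uu,-}$ is, i.e.\ precisely when $\bB$ is a right pre-Hopf monad.

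With this identification in hand, the first assertion follows from Lemma~\ref{mate-computation}: the components of~$\tilde\Omega$ are, under the identifications recorded just before the statement, exactly the morphisms~$\theta_X$, and these are isomorphisms by the right pre-Hopf hypothesis, so that lemma shows $\tilde\Omega$ implements a comonad extension $\bT^{\theta}$ of $\bT$. For the second assertion, the same identification shows the mixed distributive law~$\theta$ is invertible, placing us in the hypotheses of Corollary~\ref{desc-cat-cor}; that corollary then yields the bijection between left~$\chi$-coalgebra structures on $\rN$ and $\bT^{\theta}$-opcoalgebra structures on $\rN\rV$. (One may recall from its proof that invertibility of~$\theta$ forces $\chi = \psi^{-1}$, which is what makes Corollary~\ref{immediate} applicable there.)

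The only point demanding care is the opening one: checking that the formula defining~$\theta$ in this section agrees on the nose with $H^r_{\uu,-}$, and recalling that a \emph{right pre-Hopf monad} was defined to be an opmonoidal monad with $H^r_{\uu,-}$ invertible, so that no further genericity of the object~$X$ is needed. Beyond this bookkeeping there is no genuine obstacle, everything else being a direct invocation of Lemma~\ref{mate-computation} and Corollary~\ref{desc-cat-cor}.
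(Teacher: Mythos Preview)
Your proposal is correct and follows essentially the same route as the paper: the paper's proof simply notes that $\tilde\Omega_{(X,\alpha)} = H^r_{\uu,X}$ for all $(X,\alpha)\in\Ob(\mc A^{\bB})$ and then invokes Corollary~\ref{desc-cat-cor}. You make the same identification (factoring it through $\tilde\Omega_{(X,\alpha)}=\theta_X$ via Lemma~\ref{mate-computation} and $\theta_X=H^r_{\uu,X}$ by inspection of the formulas) and apply the same corollary, so the arguments coincide.
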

 
\begin{proof} This follows by applying Corollary~\ref{desc-cat-cor} and by noting that $\tilde \Omega_{(X,\alpha)}$ equals to $H^r_{\uu,X}$ for any~$(X,\alpha) \in \Ob(\mc A^{\bB})$ .
\end{proof}

Following \cite[Section 5.2.]{MW10}, consider the functor~$\rK \co \mc A \to (\mc A^{\bB})_{\bC^{\theta}}$, given by setting for all~$X \in \Ob(\mc A)$ and morphisms~$f$  in $\mc A$, \[\rK(X)= (\rF(X), \rB_2(\uu, X)) \quad \text{and} \quad \rK(f)= \rF(f).\] 
Note that it follows from equation~\eqref{opmonoidality-mult} that for any~$X \in \Ob( \mc A)$, the morphism~$\rB_2(\uu, X)$ is a morphism from  the algebra~$\rF(X)$ to  the algebra~$\rC^{\theta}\rF(X)$. 
Thus the functor~$\rK$ is well-defined and induces a coalgebra~$(\rF, \gamma)$ over~$\bC^{\theta}$ by setting~$\gamma_X= \rB_2(\uu, X)$ for all $X \in \Ob(\mc A)$. 
We clearly have $\rV \rK = \rF$ and the theory from Section \ref{generalised-hopf-thm} applies. 
The Galois map associated to $\rK$ and~$\rF \dashv \rU$ is given by setting for all $(X,\alpha)\in \Ob(\mc A^{\bB})$, \[ \kappa_{(X,\alpha)}= (\id_{\rB(\uu)} \tens \alpha)\rB_2(\uu, X) \co (\rB(X), \mu^{\bB}_X) \to\rC^{\theta} (X,\alpha).\]

\begin{cor} If the opmonoidal monad~$\bB$ is a pre-Hopf monad and if~$\rF$ is comonadic, then left $\chi$-coalgebras on $\rN$ correspond to~$\overline {\bT^{\theta}}$-opcoalgebras on~$\rN\rF$. 
\end{cor}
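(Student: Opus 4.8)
The plan is to reduce the statement to Corollary~\ref{conjugate}, whose hypotheses are that the mixed distributive law $\theta$ is invertible and that $\rK$ is an equivalence. Both will be extracted from the pre-Hopf assumption together with the assumed comonadicity of $\rF$.

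First I would check invertibility of $\theta$. By the formula recorded just above the statement, $\theta_X = (\mu^{\bB}_{\uu} \tens \id_{\rB(X)})\rB_2(\rB(\uu), X)$, which is exactly the value $H^r_{\uu,X}$ of the right fusion operator. Since a pre-Hopf monad is in particular a right pre-Hopf monad, $H^r_{\uu,-}$ is a natural isomorphism, hence $\theta$ is invertible. In particular Corollary~\ref{desc-cat-cor} applies, so $\chi=\psi^{-1}$ and left $\chi$-coalgebra structures on $\rN$ already correspond bijectively to $\bT^{\theta}$-opcoalgebra structures on $\rN\rV$; the remaining task is to pass from $\rN\rV$ to $\rN\rF$ via Corollary~\ref{conjugate-general}, which requires $\rK$ to be an equivalence.

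Next I would verify the hypothesis of Theorem~\ref{generalised-hopf-thm-mw} that $\kappa\rF$ is an isomorphism. Unwinding $\kappa = \rC^{\theta}\varepsilon \circ \gamma\rU$ with $\gamma_X = \rB_2(\uu,X)$, with $\varepsilon_{(X,\alpha)}=\alpha$, and with $\rC^{\theta}(f)=\id_{\rB(\uu)}\tens f$, one obtains $\kappa_{(X,\alpha)} = (\id_{\rB(\uu)}\tens\alpha)\rB_2(\uu,X)$, and therefore $\kappa_{\rF(Y)} = (\id_{\rB(\uu)}\tens\mu^{\bB}_Y)\rB_2(\uu,\rB(Y)) = H^l_{\uu,Y}$ for every $Y\in\Ob(\mc A)$. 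Since a pre-Hopf monad is also a left pre-Hopf monad, $H^l_{\uu,-}$ is a natural isomorphism, so $\kappa\rF$ is an isomorphism. Combining this with the hypothesis that $\rF$ is comonadic, Theorem~\ref{generalised-hopf-thm-mw} shows that $\rK$ is an equivalence.

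With $\theta$ invertible and $\rK$ an equivalence, Corollary~\ref{conjugate} yields the desired correspondence between left $\chi$-coalgebras on $\rN$ and $\overline{\bT^{\theta}}$-opcoalgebras on $\rN\rF$. The only genuinely fiddly point is the identification $\kappa_{\rF(Y)} = H^l_{\uu,Y}$: one must track the monoidal coherence isomorphisms $\uu\tens X\cong X$ that are implicit in the formulas for $\gamma$ and for $\rC^{\theta}$ and match the resulting composite against the definition of the left fusion operator. Everything else is a direct appeal to results established in Sections~\ref{descent-cats} and~\ref{generalised-hopf-thm}, so I do not expect any conceptual obstacle.
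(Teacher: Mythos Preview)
Your proposal is correct and follows essentially the same route as the paper: identify $\theta$ with the right fusion operator $H^r_{\uu,-}$ to get invertibility of $\theta$, identify $\kappa\rF$ with the left fusion operator $H^l_{\uu,-}$ to get the first hypothesis of Theorem~\ref{generalised-hopf-thm-mw}, and then invoke Corollary~\ref{conjugate}. The paper cites \cite[Remark~3.13]{MW14} for the identification $\kappa\rF = H^l_{\uu,-}$ and the specialized Corollary~\ref{desc-cat-cor-opmonoidal} in place of your direct appeal to Corollary~\ref{desc-cat-cor}, but these are cosmetic differences.
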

\begin{proof}  It is observed in~\cite[Remark 3.13]{MW14} that the natural transformation~$\kappa\rF$ equals the left fusion operator~$ H^l_{\uu,-}$.  Since $\bB$ is a left pre-Hopf monad, $\kappa \rF$ is an isomorphism. Further, since $\rF$ is comonadic, it follows from Theorem \ref{generalised-hopf-thm-mw} that $\rK$ is an equivalence. The statement now follows by combining Corollaries \ref{desc-cat-cor-opmonoidal} and \ref{conjugate}.
\end{proof}

\section{An example: Hopf
algebras}\label{exfrombialgs}
In this section, we study a
particular and motivating
example which is
a special case of both
Sections \ref{bimonads-mw} and
\ref{bimonads-blv}. Let $H =
(H, \mu,  \eta, \Delta,
\varepsilon)$ be a
bialgebra over a commutative
ring~$k$, $\mc A = {}_{k}\Mod$
be the cate\-gory of left
$k$-modules,~$\mc B=
{}_{H}\Mod$ be the cate\-gory
of left $H$-modules, and  $\mc
C= {}_{H}^{H}\Mod$ be the cate\-gory of (left-left) Hopf modules over $H$.
Throughout the section we
denote $\tens= \tens_{k}$ and
$1= \eta(1_k)$. We  also use
Sweedler's notation for the
comultiplication, that is, we
write~$\Delta(h)= h_1\tens
h_2$ for any~$h\in H$. Also,
for an~$H$-coaction $\gamma$ of
a left $H$-comodule $M$, we
write $\gamma(m) = m_{-1}
\tens m_0$ for any $m \in M$.
The image of~$h\tens m$ under
a left $H$-action on $M \in
\Ob({}_H \Mod)$ will be
shortly written as $hm$. For
background information, see
\cite{radford}.

Furthermore, let $\mc Y = \bullet$ be the terminal category and let $\rM \co \mc Y \to  {}_{H}\Mod$ be the functor which picks a left $H$-module $M$. 
Finally, let $\mc Z= {}_{k}\Mod$ and set $\rN = N \tens_H - \co {}_{H}\Mod \to {}_{k}\Mod $, where $N$ is a right $H$-module. 
As we shall explain, this
yields the following
particular case of
diagram~\eqref{setup-desc-cats}
if $H$ is a Hopf algebra with
bijective antipode:
\begin{equation}  
\begin{gathered}
\xymatrix@C=8mm@R=4mm{
 & & {}_{H}^{H}\Mod \ar@/^-3mm/[dd]_\rV  \ar@{}[rrdd]^(.25){}="a"^(.75){}="b" \ar@{=>}^-{\tilde \Omega} "a";"b" \ar[rr]^{\rT^{\theta}}   \ar@{}[dd]|{\dashv}
 & &  {}_{H}^{H}\Mod \ar@/^-3mm/[dd]_\rV \ar@{}[dd]|{\dashv} && 
\\\\
	\bullet \ar[rr]^{\rM \quad} & & {}_{H}\Mod  \ar@/^-3mm/[uu]_\rG   \ar@/^3mm/[dd]^\rU \ar@{}[dd]|{\dashv} \ar@<0.4ex>[rr]^{\rT} \ar@<-0.4ex>[rr]_{\rC^{\theta}}
 & &{}_{H}\Mod
\ar@/^-3mm/[uu]_\rG
\ar@/^3mm/[dd]^\rU
\ar@{}[dd]|{\dashv}
\ar[rr]^{\rN = N \tens_H {-}}
&& {}_{k}\Mod.
\\\\
& & {}_{k}\Mod \ar@/^3mm/[uu]^\rF \ar@<0.4ex>[rr]^{\rB} \ar@{}[rruu]^(.25){}="a"^(.75){}="b" \ar@{=>}_-{\phantom{a} \Omega} "a";"b"  \ar@<-0.4ex>[rr]_{\rC}                   & & {}_{k}\Mod 
\ar@/^3mm/[uu]^\rF &&}
\end{gathered}
\end{equation}

\subsection{A bimonad from
bialgebras} In what follows,
we consider the bimonad
$\bB=(\rB,\mu^{\bB},
\eta^{\bB}, \Delta^{\bB},
\varepsilon^{\bB}, \theta)$
on ${}_{k}\Mod$ (in the sense
of \cite{MW10}), which is
defined by the formulas \begin{gather*} \rB=H \tens {-} , \quad \mu_X^{\bB}= \mu \tens \id_X, \quad \eta_X^{\bB}= 1 \tens  \id_X, \quad \Delta_X^{\bB}= \Delta\tens \id_X, \\
 \varepsilon_X^{\bB} = \varepsilon \tens \id_X, \quad \theta_M (a\tens b \tens m) = a_1b \tens a_2 \tens m.
\end{gather*}

\subsection{The first
adjunction} First, we have
that
${}_k\Mod^{\bB}={}_{H}\Mod$
and that the Eilenberg--Moore
adjunction $\rF \dashv \rU$
for~$\bB$ consists of the free
module functor~$\rF=H\tens -
\co {}_{k} \Mod \to
{}_{H}\Mod$ and the forgetful
functor $\rU$.  
For a left $H$-module $M$, the~$H$-action on $\rF(M)=H\tens M$ is induced by multiplication
in~$H$. 
The unit and the counit of the adjunction $\rF\dashv \rU$ are defined by the formulas
\[ \eta_M' (m) = 1 \tens m, \qquad \varepsilon_{(M,\alpha)}' (h \tens m) = hm. \]
Thus  the induced comonad $\bT$ on ${}_H \Mod$ is given by the formulas
\[\rT=\rF\rU = H\tens -, \quad \Delta^{\bT}_{(M, \alpha)}(h\tens m) = h \tens 1 \tens m, \quad \varepsilon^{\bT}_{(M,\alpha)} (h\tens m)=hm.\]
The second comonad $\bC^{\theta}$ on ${}_H\Mod$ is a lift of the comonad $\bC = (\rB, \Delta^{\bB}, \varepsilon^{\bB})$ on~${}_k\Mod$. It is explicitly given by $\rC^{\theta}(M,\alpha)= (H\tens M, \beta) $, where
$$ \beta (a\tens b \tens m) = a_1b \tens \alpha(a_2\tens m) = a_1b \tens a_2m$$ is the diagonal action. The comultiplication and the counit of the comonad~$\bC^{\theta}$ are defined by 
\[\Delta_X^{\bC^{\theta}}= \Delta \tens \id_X, \quad
 \varepsilon_X^{\bC^{\theta}} = \varepsilon \tens \id_X. \]
 The natural isomorphism $\Omega \co \rC \rU \Rightarrow \rU \rC^{\theta}$ which implements this lift is given by $\Omega_{(M,\alpha)} = \id_{H\tens M}$ for any $H$-module $M$. 
 Its mate $\Lambda \co \rF \rC \Rightarrow \rC^\theta \rF$ is given by setting for any~$k$-module $M$,  \[\Lambda_M (a \tens b\tens m)= a_1b\tens a_2 \tens m,\]
 which is an $H$-module map from the free $H$-module~$H\tens H \tens M$ to the~$H$-module~$H\tens H \tens M$ with the diagonal action on first two tensorands.

\subsection{The second adjunction} We clearly have that $({}_H\Mod)_{\bC^{\theta}}= {}_{H}^{H}\Mod  $. The Eilenberg--Moore adjunction $\rV \dashv \rG$ for $\bC^{\theta}$ consists of the forgetful functor 
$\rV \co {}_{H}^{H}\Mod \to
{}_{H}\Mod$ (which is left
adjoint!) and the cofree comodule functor $\rG \co  {}_{H}\Mod \to {}_{H}^{H}\Mod $.
More precisely,  $\rG$ is defined by setting
$\rG= H \tens -$, such that for an $H$-module~$M$, the $H$-action on~$\rG(M)$ is the diagonal action 
and the $H$-coaction on~$\rG(M)$ is given by~$\Delta \tens \id_M$.
The unit and the counit of the
adjunction $\rV \dashv \rG$
are defined by the formulas 
\[\eta''_{(M, \alpha, \gamma)}  = \gamma, \qquad \varepsilon''_{(M,\alpha)} = \varepsilon^{\bC^{\theta}}_{(M,\alpha)}. \]
The comonad $\bT^{\theta}$
such that $\rV \rT^{\theta} =
\rT \rV$ is given by
\[\rT^{\theta}= H\tens -, \quad \Delta^{\bT^{\theta}}_{(M, \alpha, \gamma)}(h\tens m) = h \tens 1 \tens m, \quad \varepsilon^{\bT^{\theta}}_{(M,\alpha)} (h\tens m)=hm.\]
For a Hopf module $M$, the $H$-action on $\rT^{\theta}(M)$ is the free action and the~$H$-coaction~$\gamma$  on $\rT^{\theta}(M)$
is given by the formula \[\gamma (h\tens m) = h_1m_{-1} \tens h_2 \tens m_0.  \]
We have the natural
transformation $\tilde
\Lambda \co  \rV\rT^{\theta}
\Rightarrow \rT \rV$, which is
given by setting $\tilde
\Lambda _{(M,\alpha,\gamma)} =
\id_{\rF(M)}$ for any
$H$-module $M$. The following
result allows us to define
$\tilde \Omega \co \rT^\theta
\rG \Rightarrow \rG \rT$,
which will be part of that
data for the extension~$\bT^\theta$ of $\bT$ through $\rV \dashv \rG$.  

\begin{lem} \label{hopfbij}
The following statements hold:
\begin{enumerate}
  \item      The unique
natural transformation $\tilde
\Omega \co \rT^{\theta} \rG
\Rightarrow \rG\rT$ whose mate
equals~$\tilde \Lambda$ is
given for any $(M,\alpha)$ in
$_{H}\Mod$ by
      \[\tilde \Omega_{(M,\alpha)} (a \tens b \tens m) = \theta_M (a \tens b \tens m)= a_1b \tens a_2 \tens m. \]
  \item The morphism $\tilde \Omega_{(M,\alpha)}$ is a morphism of Hopf modules for any $(M,\alpha)$ in $_{H}\Mod$. 
  \item If $H$ is a Hopf algebra with a bijective antipode, then $\theta_M$ is invertible with inverse
\[\theta^{-1}_M (a\tens b \tens m)= b_2\tens S^{-1}(b_1)a \tens m.
\]  Moreover,~$\tilde
\Omega$ implements an
extension of $\bT^{\theta}$ of
$\bT$ through  $\rV \dashv \rG$.  
\end{enumerate}
\end{lem}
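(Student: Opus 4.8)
The plan is to dispose of parts~(1) and~(2) quickly and then concentrate on part~(3). Part~(1) is immediate from Lemma~\ref{mate-computation}: in the present situation the natural transformation $\tilde \Lambda \co \rV \rT^{\theta} \Rightarrow \rT \rV$ is the identity, $\tilde \Lambda_{(M,\alpha,\gamma)} = \id_{\rF(M)}$, so that lemma identifies the unique $\tilde \Omega$ with mate $\tilde \Lambda$ as $\tilde \Omega_{(M,\alpha)} = \theta_M$, and reading $\theta$ off from the defining formulas of the bimonad $\bB$ gives $\theta_M(a \tens b \tens m) = a_1 b \tens a_2 \tens m$. For part~(2) I would invoke the fact, recalled in the proof of Lemma~\ref{mate-computation}, that $\theta_X$ is always a morphism in $({}_{H}\Mod)_{\bC^{\theta}} = {}_{H}^{H}\Mod$ because $(\rB,\theta)$ is a colax endomorphism of the comonad $\bC$; alternatively one checks this by hand, writing the source $\rT^{\theta}\rG(M)=H\tens H\tens M$ with the free $H$-action on the first leg and coaction $a\tens b\tens m\mapsto a_1b_1\tens a_2\tens b_2\tens m$, and the target $\rG\rT(M)=H\tens H\tens M$ with diagonal action $c\cdot(a\tens b\tens m)=c_1a\tens c_2b\tens m$ and coaction $a\tens b\tens m\mapsto a_1\tens a_2\tens b\tens m$, whence $H$-linearity and $H$-colinearity of $\theta_M$ follow from coassociativity.

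The core is part~(3). I would first verify the displayed inverse formula by computing the two composites in ${}_{k}\Mod$. A direct substitution gives $\theta^{-1}_M\theta_M(a\tens b\tens m) = a_3\tens \bigl(S^{-1}(a_2)a_1\bigr)b\tens m$, which reduces to the identity $\sum a_3\tens S^{-1}(a_2)a_1 = a\tens 1$ in $H\tens H$, and $\theta_M\theta^{-1}_M(a\tens b\tens m) = \bigl(b_2S^{-1}(b_1)\bigr)a\tens b_3\tens m$, which reduces to $\sum b_2S^{-1}(b_1)\tens b_3 = 1\tens b$. Both reductions follow from coassociativity together with the counit-type identities $\sum S^{-1}(x_2)x_1 = \varepsilon(x)1 = \sum x_2 S^{-1}(x_1)$, which hold because $S$ being bijective makes $S^{-1}$ the antipode of the co-opposite bialgebra $H^{\mathrm{cop}}$ (see~\cite{radford}). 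Once $\theta_M$ is known to be bijective in ${}_{k}\Mod$ and, by part~(2), a morphism in ${}_{H}^{H}\Mod$, its inverse is automatically $H$-linear and $H$-colinear, so $\theta_M=\tilde\Omega_{(M,\alpha)}$ is an isomorphism from $\rT^{\theta}(\rG(M))$ to $\rG(\rT(M))$ in ${}_{H}^{H}\Mod$ for every $(M,\alpha)\in\Ob({}_{H}\Mod)$. By the criterion stated in Lemma~\ref{mate-computation}, this is exactly what is needed for $\tilde\Omega$ to implement an extension $\bT^{\theta}$ of $\bT$ through $\rV\dashv\rG$.

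The one delicate point is the Sweedler-index bookkeeping in the computations of part~(3): the antipode identities above act on the two legs of a \emph{single} coproduct, so before applying them one must regroup, using coassociativity, the threefold coproducts produced by $\theta^{-1}_M\theta_M$ and $\theta_M\theta^{-1}_M$ so that $S^{-1}$ and the neighbouring multiplication operate on legs coming from one and the same application of $\Delta$. All the rest is either formal or a direct specialization of the general results of Section~\ref{descent-cats}.
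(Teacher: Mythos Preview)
Your proposal is correct and follows essentially the same approach as the paper: part~(1) via Lemma~\ref{mate-computation}, part~(2) via the mixed distributive law (equivalently, $(\rB,\theta)$ being a colax endomorphism of~$\bC$), and part~(3) by verifying the inverse formula and then invoking the criterion of Lemma~\ref{mate-computation}. The only difference is that you spell out the Sweedler computations for part~(3) explicitly, whereas the paper declares them well-known and defers to~\cite[Example~2.10]{BLV11}.
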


\begin{proof} This is mostly a corollary of statements seen in Section \ref{descent-cats} which apply to bimonads in the sense of Mesablishvili and Wisbauer.
\begin{enumerate}
  \item This statement follows from Lemma
\ref{mate-computation}.
  \item Following $(1)$, one
needs to verify that
$\theta_M$ is a morphism from
the free $H$-module $H\tens
H\tens M$ to the $H$-module
$H\tens H\tens M$, where~$H$
acts diagonally on the first
two tensorands.  Also, one
needs to check that it is a
morphism from~the
$H$-comodule~$H\tens H \tens
M$ with coaction $$a\tens b
\tens m \mapsto a_1b_1\tens
a_2 \tens b_2 \tens m$$  to
the cofree comodule $H\tens H \tens M$. Both verifications can be done by hand but they also follow from the fact that $\theta$ is a mixed distributive law.
  \item The first part of the statement  is well-known and left to the reader. See~\cite[Example 2.10]{BLV11} for the expression of the inverse
in the case of Hopf algebras (with invertible antipode) in braided monoidal categories.
Finally, once $\theta$ is an
isomorphism, it follows from
$(1)$ and~$(2)$ that~$\tilde\Omega_{(M,\alpha)}$ is
an isomorphism of the
aforementioned Hopf modules for any $H$-module $(M,\alpha)$. \qedhere
\end{enumerate}
\end{proof}

Let
$\rK \co {}_k \Mod \to {}_{H}^{H}\Mod $ be the functor given by the formulas
 \[\rK(X)= (\rF(X), \Delta_X^{\bB})= ((\rB(X),\mu^{\bB}_X), \Delta_X^{\bB}) \quad \text{and} \quad \rK(f)= \rF(f).\] 
It follows from Section \ref{bimonads-mw}
 that the functor~$\rK$ is well-defined 
 and induces a coalgebra~$(\rF, \gamma)$ 
 over~$\bC^{\theta}$ by setting~$\gamma_X= \Delta_X^{\bB}$ for all $k$-modules $X$.
The Galois map associated to~$\rK$ and~$\rF \dashv \rU$
 is given by setting for all $H$-modules~
 $(M,\alpha)$, \[ \kappa_{(M,\alpha)}= \rB(\alpha)\Delta_M^{\bB} \co (\rB(M), \mu^{\bB}_M) \to (\rB(M), \rB(\alpha)\theta_M).\] 
For any $k$-module $X$, the map $\kappa_{\rF(X)} \co H\tens H \tens X \to H\tens H \tens X$ is given by setting for any $h,\ell \in H$ and $x \in X$, $$\kappa_{(H\tens X, \mu_H \tens \id_X)} (h\tens \ell \tens x) = h_1\tens h_2 \ell \tens x.$$
It is well-known that $H$ is a
Hopf algebra if and only if the
latter map is for all $X$ an
isomorphism; in this case, its
inverse is 
$h \otimes \ell  \tens x \mapsto 
h_1 \otimes S(h_2)\ell \tens x$. 
By the fundamental
theorem on Hopf modules from
\cite{LS69}, this is also
equivalent to the condition
that the category $\mc C =
{}_{H}^{H}\Mod$ of Hopf modules is equivalent
to~$\mc A = {}_k \Mod$ by
means of the functor that
assigns to a Hopf module $M$
the space of coinvariants $k \Box_H M = \{ x
\in M \mid x_{-1} \otimes x_0
= 1 \otimes x\}$.

If $H$ is a bialgebra, right-$\chi
$-coalgebras are by \cite[Proposition 3.6]{KKS15} in bijection with $H$-modules
which are simultaneously $H$-comodules (without a compatibility condition between them).
However, if $H$ is
a Hopf algebra with bijective
antipode, thanks to Lemma \ref{hopfbij} (3) we can apply Corol\-laries \ref{desc-cat-cor} and \ref{conjugate} to obtain a
completely dual classification
of the left $ \chi
$-coalgebras as modules and
comodules.

\begin{rem}
The example of this section
can be explained in terms of
the opmonoidal monad (see Section \ref{bimonads-blv}) $\rB=
H\tens -$ on ${}_k\Mod$, where
$$\rB_2(X,Y) \co H\tens X
\tens Y \to H \tens X \tens
H\tens Y,\quad\text{and}\quad \rB_0 \co H \to
k$$ 
are given by $$\rB_2(X,Y)(h \tens x \tens y) = h_1 \tens x \tens h_2 \tens y \quad \text{and} \quad \rB_0 (x) = \varepsilon(x).$$ 
\end{rem}

\section{A nonexample: The list bimonad}

In this section, we explore the non-empty list monad 
on the category $\cSet$ of
sets. This is a bimonad in the
sense of~\cite{MW11}. However, we show that in this case the important Corollaries \ref{desc-cat-cor} and \ref{conjugate} from Section \ref{mainresult} are not applicable.

\subsection{The non-empty list bimonad}

In order to fix the notation, we recall the following.
\begin{defn}A \emph{list} on a set
$X$ is an element of the set
$\coprod_{n \ge 0} {X^n}$, that is,
a pair $(l,n)$, where $l\in X^n$ is an ordered $n$-tuple of elements 
in $X$ and $n \ge 0$ is an
integer called the \emph{length} of
the list $l$. The list is
\emph{non-empty} if $n  > 0$.
In what follows, we denote such a list $((x_1, \dots, x_n),n)$ by $[x_1, \dots, x_n]$.
\end{defn} 

Following
\cite[Section 6.3]{KKS15}, one
defines the \emph{non-empty
list bimonad} $\bL^+ = (\rL^+,
\mu^{\bL^+}, \eta^{\bL^+},
\Delta^{\bL^+},
\varepsilon^{\bL^+}, \theta) $ on~$\cSet$  as follows. First, for any set~$X$ and 
any function~$f\co X \to Y$, 
$$\rL^+ (X) = \coprod_{n>0} {X^n} \quad \text{and} \quad 
\rL^+ (f) ([x_1, \dots, x_n]) = [f(x_1), \dots, f(x_n)].$$
Next, the multiplication
$\mu^{\bL^+}_X \co \rL^+(\rL^+ (X)) \to \rL^+(X) $ on $\bL^+$
is given by sending a list
$$ [ [ x_{1,1}, \ldots, x_{1,
n_1} ] , \ldots, [x_{m,1}, \ldots, x_{m, n_m}]] $$ to the concatenated list
$$ [x_{1,1}, \ldots, x_{1,
n_1} ,\ldots, x_{m,1}, \ldots, x_{m, n_m}]. $$
The unit map
$\eta^{\bL^+}_X
 \co X \to \rL^+ (X)$ sends an
element~$x\in X$ to the list
$[x]$. The comultiplication
and counit of $\bL^+$ are
given by the formulas 
$$
	\Delta^{\bL^+}_X( [x_1, \ldots, x_n])
= [[x_1, \ldots, x_n], \ldots, [x_n]], \quad 
	\varepsilon^{\bL^+}_X([x_1, \ldots, x_n]) = x_1.
$$  
To shorten the notation, we sometimes refer to the comonad $(\rL^+, \Delta^{\bL^+}, \varepsilon^{\bL^+})$ as $\bC$.
Finally, the mixed
distributive law
$ \theta$  is defined as follows.
 For any set~$X$, the function $\theta_X$ sends a list
$$ [ [ x_{1,1}, \ldots, x_{1,
n_1} ] , \ldots, [x_{m,1}, \ldots, x_{m, n_m}]] $$ in
$\rL^+(\rL^+ (X))$ to the list \begin{align*}
\Big[[x_{1,1}, x_{2,1}, x_{3,1} \ldots, x_{m,1}],
\ldots, [x_{1, n_1}, x_{2,1}, x_{3,1}, \ldots,
x_{m,1}]&, \\ [x_{2,1}, x_{3,1} \ldots, x_{m,1}],
\ldots, [x_{2, n_2}, x_{3,1}, \ldots x_{m,1} ]&,\\
\ldots&,  \\  [ x_{m,1} ] , [ x_{m,2} ], \ldots,
[x_{m,n_m} ] \Big]&
\end{align*}
in $\rL^+(\rL^+ (X))$.

\begin{prop} \label{theta-not-inv}
For any set $X$, the function $\theta_X$ is injective. However, for a non-empty set $X$, the function $\theta_X$ is not surjective.
\end{prop}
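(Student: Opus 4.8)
The plan is to prove the two assertions separately, in both cases by exploiting the rigid combinatorial shape of the lists that occur as values of $\theta_X$. Write a typical element of $\rL^+(\rL^+(X))$ as $L=[[x_{1,1},\dots,x_{1,n_1}],\dots,[x_{m,1},\dots,x_{m,n_m}]]$, where $m\ge 1$ and every $n_i\ge 1$ because $\bL^+$ is the \emph{non-empty} list monad. It is convenient to index the sublists of $\theta_X(L)$ by pairs $(i,j)$ with $1\le i\le m$ and $1\le j\le n_i$, ordered lexicographically, so that the $(i,j)$-th sublist is $[x_{i,j},x_{i+1,1},x_{i+2,1},\dots,x_{m,1}]$ (the tail being empty when $i=m$). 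Two features are then immediate from the defining formula: the length of the $(i,j)$-th sublist is $m-i+1$, so it depends only on $i$, and its first entry is $x_{i,j}$.

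\emph{Injectivity.} I would show that $L$ can be reconstructed from $\theta_X(L)$. The lengths of the successive sublists of $\theta_X(L)$ form the sequence
\[(\underbrace{m,\dots,m}_{n_1},\,\underbrace{m-1,\dots,m-1}_{n_2},\,\dots,\,\underbrace{1,\dots,1}_{n_m}),\]
which is weakly decreasing and, since each $n_i$ is positive, exhausts the values $1,\dots,m$. Hence $m$ is recovered as the largest sublist length (equivalently, as the length of the first sublist), and $n_i$ as the number of sublists of length $m-i+1$; this determines the partition of $\theta_X(L)$ into the $m$ consecutive blocks coming from the rows of $L$. Reading off the first entry of the $j$-th sublist in the $i$-th block then recovers $x_{i,j}$, and hence all of $L$. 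So $L$ is determined by $\theta_X(L)$ — in fact already by the sequence of sublist lengths of $\theta_X(L)$ together with the sequence of first entries of those sublists — and therefore $\theta_X$ is injective. I expect this half to be entirely routine, the only care needed being to keep the block/row/sublist bookkeeping straight.

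\emph{Non-surjectivity.} Assume $X\ne\emptyset$ and fix $a\in X$. I claim the length-one list $[[a,a]]\in\rL^+(\rL^+(X))$, consisting of a single sublist of length $2$, lies outside the image of $\theta_X$. Indeed, if $\theta_X(L)=[[a,a]]$, then comparing the number of sublists gives $n_1+\cdots+n_m=1$; as each $n_i\ge 1$, this forces $m=1$ and $n_1=1$, so $L=[[x_{1,1}]]$ and $\theta_X(L)=[[x_{1,1}]]$, whose unique sublist has length $1$ rather than $2$ — a contradiction. Hence $\theta_X$ is not surjective. I would also note that non-emptiness of $X$ is essential: $\rL^+(\emptyset)=\emptyset$, so $\rL^+(\rL^+(\emptyset))=\emptyset$ and $\theta_\emptyset$ is vacuously a bijection.

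The main obstacle is not mathematical but organizational: the formula defining $\theta_X$ involves a doubly-indexed family and produces a list whose overall length and whose individual sublist lengths both vary with the input, so a disciplined indexing convention (such as the lexicographic labelling by pairs $(i,j)$ used above) is what makes the reconstruction in the injectivity half transparent. Beyond that bookkeeping there is nothing deep, and the non-surjectivity half is a one-line counting argument once the shape of $\theta_X(L)$ has been recorded.
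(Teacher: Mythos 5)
Your proof is correct and follows essentially the same route as the paper: the non-surjectivity argument is exactly the paper's one (a single sublist of length $2$, whose preimage would force $n_1+\cdots+n_m=1$, hence $m=n_1=1$, contradicting the inner length), with the final contradiction spelled out more explicitly. For injectivity the paper merely asserts that it follows from the definition of $\theta$, and your reconstruction of $m$, the $n_i$ and the entries $x_{i,j}$ from the sublist lengths and first entries of $\theta_X(L)$ is a correct and welcome filling-in of that step.
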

\begin{proof} The injectivity follows from the definition of the mixed distributive law~$\theta$. 
To see that~$\theta_X$ is not a surjection we remark that for any non-empty set~$X$, there is a list $y \in \rL^+ (\rL^+ (X))$  which has no preimage. 
For instance, take~$y= [[y_1, y_2]] \in \rL^+(\rL^+(X)).$ 
Assume that there is an~$x \in \rL^+ (\rL^+ (X))$ with $\theta_X(x) =y$. 
The list~$x$ is of the form 
$$ [ [ x_{1,1}, \ldots, x_{1,n_1} ] , \ldots, [x_{m,1}, \ldots, x_{m, n_m}]] $$ for some $m\ge 1$ and $n_1, \dots, n_m \ge 1$.
Since~$\theta_X(x) =y$, we necessarily have $\mathrm{length}(\theta_X(x))= \mathrm{length}(y),$ which is equivalent with 
$$n_1+ \dots + n_m =1.$$ Now since the left hand side of the above equation is at least~$m\ge 1$, we have that~$m = 1$ and $n_1=1$. Contradiction.
\end{proof}

\subsection{Entwined coalgebras}
The Eilen\-berg--Moore cate\-gory~$\cSet^{\bL^+}$ of the monad $\bL^+= (\rL^+, \mu^{\bL^+}, \eta^{\bL^+})$ is isomorphic to the cate\-gory $\cSgp$ of semigroups (see \cite[Proposition 6.4.]{KKS15}). 
The Eilenberg--Moore adjunction~$\rF \dashv \rU$ for the monad $\bL^+$ consists of the free functor, which is given by $$  \rF(X) = (\rL^+(X), \mu_X^{\bL^+}), \qquad \rF(f) = \rL^+ (f)$$ and the forgetful functor $\rU \co \cSgp \to \cSet$.
The unit and the counit of the adjunction $\rF \dashv \rU$ are given by the formulas
$$\eta^{\bL^+}_{X}(x)=[x], \qquad \varepsilon^{\bL^+}_{(X, \alpha)} = \alpha.$$
Clearly, $\rU\rF = \rL^+$, but by Section \ref{monads-and-adjunctions}, we also have that the endo\-functor $\rF \rU$ is part of a comonad $\bT$ on $\cSgp$.
Next, following Section \ref{descent-cats},  there is a comonad lift $\bC^{\theta}$ of $\bC= (\bL^+, \Delta^{\bL^+}, \varepsilon^{\bL^+})$ through the adjunction $\rF \dashv \rU$ so that 
\[
\xymatrix{\cSgp \ar[r]^{\rC^{\theta}}  \ar[d]_{\rU} &   \cSgp \ar[d]^{\rU}\\
	\cSet \ar@{}[ru]^(.25){}="a"^(.75){}="b" \ar@{=>}^-{\Omega} "a";"b"  \ar[r]_{\rC}  & \cSet}
\] commutes up to the natural isomorphism $\Omega=\id$, i.e. commutes strictly. Further, let $\rV \dashv \rG$ be the Eilenberg--Moore adjunction for $\bC^{\theta}$. As described in Section \ref{descent-cats}, there is a comonad $\bT^\theta$ on the category ~$\cSgp_{\bC^{\theta}}$ such that $\rV \rT^\theta = \rT \rV$. 
This defines the identity natural transformation $\tilde \Lambda \co  \rV \rT^\theta \Rightarrow \rT \rV$. 
 We have by Lemma~\ref{mate-computation}, that the unique natural transformation $\tilde \Omega \co \rT^{\theta} \rG \Rightarrow \rG\rT$ whose mate equals~$\tilde \Lambda$ is given by $\tilde \Omega_{(X,\alpha)}=\theta_X$. 
 Since $\theta$ in not an isomorphism (see Proposition~\ref{theta-not-inv}), $\tilde{\Omega}$ cannot implement an extension $\bT^{\theta}$ of $\bT$. Thus the Corollary \ref{desc-cat-cor} is not applicable.

\subsection{Non-invertibility of the Galois map}

Let
$\rK \co \cSet \to \cSgp_{\bC^{\theta}}$ be the functor given by the formulas
 \[\rK(X)= (\rF(X), \Delta_X^{\bL^+})= ((\rL^+(X),\mu^{\bL^+}_X), \Delta_X^{\bL^+}) \quad \text{and} \quad \rK(f)= \rF(f).\] 
It follows from Section \ref{bimonads-mw} that the functor~$\rK$ is well-defined and induces a coalgebra~$(\rF, \gamma)$ over~$\bC^{\theta}$ by setting~$\gamma_X= \Delta_X^{\bL^+}$ for all sets $X$.
The Galois map associated to~$\rK$ and~$\rF \dashv \rU$ is given by setting for all semigroups~$(X,\alpha)$, \[ \kappa_{(X,\alpha)}= \rL^+(\alpha)\Delta_X^{\bL^+} \co (\rL^+(X), \mu^{\bL^+}_X) \to (\rL^+(X), \rL^+(\alpha)\theta_X).\] 
Explicitly, $\kappa_{(X,\alpha)} ([x_1,\ldots, x_n]) = [x_1\cdots x_n, x_2\cdots x_n, \ldots, x_n]$.
In the following proposition we characterize semigroups $(X,\alpha)$ with the property that $\kappa_{(X, \alpha)}$ is bijective.
\begin{prop} \label{galois-not-inv} For a nonempty semigroup $(X,\alpha)$, the semigroup morphism $\kappa_{(X,\alpha)}$ is bijective if and only if $(X,\alpha)$ is a group. 
\end{prop}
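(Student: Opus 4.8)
The plan is to prove the two implications separately, using throughout that $\kappa_{(X,\alpha)}$ sends a list of length $n$ to a list of length $n$, so that it is a bijection of $\rL^{+}(X)$ precisely when each of its restrictions $X^{n}\to X^{n}$ is a bijection. Throughout I abbreviate $\alpha([y_{1},\dots,y_{k}])$ by $y_{1}\cdots y_{k}$.

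For the ``if'' direction, suppose $(X,\alpha)$ is a group with unit $e$ and inverses $y\mapsto y^{-1}$. I would simply exhibit the candidate inverse
\[
	\sigma([y_{1},\dots,y_{n}]) = [\,y_{1}y_{2}^{-1},\ y_{2}y_{3}^{-1},\ \dots,\ y_{n-1}y_{n}^{-1},\ y_{n}\,]
\]
and check by a telescoping computation that $\sigma$ is a two-sided inverse of $\kappa_{(X,\alpha)}$: writing $z_{i}=x_{i}x_{i+1}\cdots x_{n}$ one has $z_{i}z_{i+1}^{-1}=x_{i}$ and $z_{n}=x_{n}$, so $\sigma\circ\kappa_{(X,\alpha)}=\id$, and conversely $(y_{i}y_{i+1}^{-1})(y_{i+1}y_{i+2}^{-1})\cdots(y_{n-1}y_{n}^{-1})\,y_{n}=y_{i}$, so $\kappa_{(X,\alpha)}\circ\sigma=\id$. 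Hence $\kappa_{(X,\alpha)}$ is bijective.

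For the ``only if'' direction, assume $\kappa_{(X,\alpha)}$ is bijective. Restricting to lists of length two, the map $(x_{1},x_{2})\mapsto(x_{1}x_{2},x_{2})$ is then a bijection of $X\times X$; fixing the second entry to an arbitrary $b\in X$ shows that right translation $\rho_{b}\colon x\mapsto xb$ is a bijection of $X$. Thus $(X,\alpha)$ is right cancellative and each equation $xb=c$ has a unique solution. Since $X$ is non-empty I would then pick $a_{0}\in X$, choose $e$ with $ea_{0}=a_{0}$, and deduce by right cancellation that $e$ is idempotent, that $xe=x$ for every $x$, and that every $x$ admits some $x'$ with $x'x=e$. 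The remaining task — upgrading this one-sided data to a two-sided unit and two-sided inverses, so that $(X,\alpha)$ becomes a group — is the step I expect to be the main obstacle, since right cancellativity together with right solvability is not by itself enough to force a group structure; to close the gap I would bring in more of the structure at hand, for instance that $\kappa_{(X,\alpha)}$ is in fact an isomorphism of semigroups onto $(\rL^{+}(X),\rL^{+}(\alpha)\theta_{X})$, or re-examine $\kappa_{(X,\alpha)}$ on longer lists, and then conclude via the classical fact that an associative quasigroup is a group. Once $(X,\alpha)$ is shown to be a group, the characterisation is proved.
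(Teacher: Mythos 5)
Your ``if'' direction is correct and complete; the telescoping inverse is exactly the standard argument, and the paper dismisses this half as straightforward. In the ``only if'' direction your reduction is also sound as far as it goes: bijectivity of $\kappa_{(X,\alpha)}$ on length-two lists gives that every right translation $\rho_b \co x \mapsto xb$ is bijective, whence right cancellativity, an idempotent $e$ with $xe=x$ for all $x$, and for each $x$ some $x'$ with $x'x=e$. The paper argues in the same spirit but organizes it per element: it produces for each $y$ a local unit $e_y$ with $e_yy=ye_y=y$ and a local inverse $\bar y$ with $\bar yy=y\bar y=e_y$, and then claims that all the $e_y$ coincide. Its final step asserts $e_ye_x=e_x$, but from ``$e_x$ is a right unit'' one only gets $e_ye_x=e_y$, so the paper's own proof has a gap at exactly the point where you stopped.

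That gap cannot be closed, because the ``only if'' implication is false as stated. Bijectivity of $\kappa_{(X,\alpha)}$ is equivalent to bijectivity of all right translations: your length-two analysis already captures everything, since on longer lists $\kappa_{(X,\alpha)}$ is triangular ($z_i=x_iz_{i+1}$, $z_n=x_n$), so re-examining longer lists adds nothing; and the associative-quasigroup fact needs two-sided divisibility, which you do not have. Semigroups in which all right translations are bijective are precisely the left groups $G\times E$ ($G$ a group, $E$ a left-zero semigroup), not the groups. Concretely, take $X=\{a,b\}$ with $xy=x$: every product $x_i\cdots x_n$ equals $x_i$, so $\kappa_{(X,\alpha)}$ is the identity map of $\rL^+(X)$ --- indeed an isomorphism of semigroups, both structures being concatenation, so your proposed refinement via the semigroup structure also gives nothing --- yet $(X,\alpha)$ is not a group. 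The proposition becomes correct if ``group'' is replaced by ``left group'' (or under an extra hypothesis such as left cancellativity or the existence of a left identity), and the subsequent corollary that $\rK$ is not an equivalence is unaffected, since $\kappa$ still fails to be invertible on, say, the additive semigroup of positive integers. So your instinct that right cancellativity plus right solvability does not force a group structure was exactly right; what is missing is not a trick but a true statement to prove.
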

\begin{proof}
It is straightforward to show that if~$(X,\alpha)$ is a
group, then  
$ \kappa _{(X, \alpha )}$ is
bijective.
Conversely, let $(X,\alpha)$ be a semigroup such that $\kappa _{(X, \alpha )}$ is bijective.
An elementary argument shows that the bijectivity of   $\kappa _{(X, \alpha )}$
is equivalent to the bijectivity of the map  $ [x,y] \mapsto 
[xy,y]$.
The latter in turn
is injective if and only if
the semigroup is right cancellative, 
that is, if for all~$x,y,z \in X$,
we have $xz = yz \Rightarrow x=y$. 

First, since
$[x,y] \mapsto [xy,y]$ is
bijective, we have that  for every  
$ y \in X$ there is 
a unique element $e_y \in X$
with $
\kappa _{(X, \alpha )} ( [e_y, y]
) = [y,y]$, that is, such that~$e_y y = y$. 
Note that~$(e_y e_y) y = e_y (e_yy) = 
e_y y = y$, so the uniqueness
implies~$e_y e_y = e_y$. 
Furthermore, the equality  
$(ye_y) y = y (e_yy) = yy$ combined with the fact that $X$ is right cancellative  
shows $ye_y = y$. In other words, each $y
\in Y$ has a unique idempotent 
local unit
element $e_y \in X$ with 
$e_y y = y e_y = y$. 
Next, for all $y \in
X$ there is a unique 
$\bar y\in X$ 
with
$ \kappa _{(X, \alpha )} ([\bar y,
y]) = [e_y,y]$, that is, such that 
$\bar y y = e_y$.
It follows that $(y \bar y) y = y (\bar y y) = 
y e_y = y = e_y y$, so~$y \bar y =e_y$. 
Thus $y \in X$ 
has a unique local inverse 
$\bar y \in Y$ such that~$\bar y y = y \bar y = e_y$. 

Now observe that 
$(e_y \bar y) y=
e_y (\bar y y) = 
e_y e_y = e_y = \bar y y$, 
so 
$e_y \bar y = \bar y$, which means~$e_{\bar y} = e_y$. So $y \bar y = 
\bar y y = e_y$ also implies
$ \bar {\bar y} = y$. 
Moreover, the equality~$ e_y e_y = e_y$ implies 
$e_{e_y} = e_y$ and 
$ \overline{e_y} = e_y$.
If now $x,y \in X$ are two elements,
then~$(y e_x) x = y (e_xx)=  yx $, so cancelling
$x$ on the right yields~$ye_x =y$. 
That is, all local unit elements~$e_x$ are right unit elements
for all $y \in X$. However, this in
particular 
means that~$e_ye_x = e_x= e_xe_x$ which yields 
$ e_y = e_x$. Thus as long as $X$ is
not empty, the local units all agree
and it follows that $X$ is a group. 
\end{proof}

In particular, the Galois map $\kappa$ is not an isomorphism in general. As a corollary, we have: 

\begin{cor} The functor $\rK$ is not an equivalence.
\end{cor}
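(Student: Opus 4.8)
The plan is to deduce the claim immediately from Theorem~\ref{generalised-hopf-thm-mw} together with Proposition~\ref{galois-not-inv}. By Theorem~\ref{generalised-hopf-thm-mw}, the functor $\rK$ is an equivalence precisely when $\kappa\rF$ is an isomorphism \emph{and} $\rF$ is comonadic, so it is enough to falsify the first of these two conditions rather than the (harder to analyse) comonadicity of $\rF$.

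First I would observe that, for a set $X$, the component $(\kappa\rF)_X$ is simply the Galois map $\kappa_{\rF(X)}$ evaluated at the free semigroup $\rF(X)=(\rL^+(X),\mu^{\bL^+}_X)$. Hence a natural isomorphism $\kappa\rF$ would in particular force $\kappa_{\rF(X)}$ to be bijective for every nonempty set $X$. I would then appeal to Proposition~\ref{galois-not-inv}: since for a nonempty semigroup $(Y,\alpha)$ the morphism $\kappa_{(Y,\alpha)}$ is bijective only when $(Y,\alpha)$ is a group, it suffices to note that $\rF(X)$ is never a group once $X\neq\emptyset$. This is elementary: concatenation strictly increases the length of a list, so $\ell\ell\neq\ell$ for every $\ell\in\rL^+(X)$, whence $\rF(X)$ has no idempotent and in particular no neutral element (for $X$ a one-element set, $\rF(X)$ is just the semigroup of positive integers under addition). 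Therefore $\kappa\rF$ is not an isomorphism, and by Theorem~\ref{generalised-hopf-thm-mw} the functor $\rK$ is not an equivalence.

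I expect no genuine obstacle: the argument is a short consequence of the two preceding results, and the only elementary input — that a free semigroup on a nonempty set fails to be a group — is settled by the length count above. The one point that needs a moment's care is to work with a \emph{nonempty} $X$, since $\rF(\emptyset)$ is the empty semigroup, to which Proposition~\ref{galois-not-inv} does not apply.
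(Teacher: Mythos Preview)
Your proof is correct and follows essentially the same route as the paper: both invoke Theorem~\ref{generalised-hopf-thm-mw} to reduce to showing that $\kappa\rF$ is not an isomorphism, and both rely on Proposition~\ref{galois-not-inv}. The only difference is in the last step: the paper cites \cite[Proposition~2.5]{MW14} to pass from invertibility of $\kappa\rF$ to invertibility of $\kappa$ (and then observes that not every semigroup is a group), whereas you argue directly that the free semigroup $\rF(X)$ on a nonempty set is not a group via the length count. Your variant is slightly more self-contained, while the paper's use of the cited equivalence $\kappa\rF$ invertible $\Leftrightarrow$ $\kappa$ invertible is a touch quicker once that result is taken for granted.
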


\begin{proof}
By Theorem \ref{generalised-hopf-thm-mw}, ~$\rK$ is an equivalence if and only if the functor~$\rF$ is comonadic and if~$\kappa\rF$ is an isomorphism. 
By \cite[Proposition 2.5]{MW14}, the invertibility of $\kappa\rF$ is equivalent to the invertibility of $\kappa$. 
However, by Proposition \ref{galois-not-inv}, $\kappa$ is not an isomorphism in general. Thus $\rK$ cannot be an equivalence.
\end{proof}

To sum up, Corollary \ref{conjugate} is not applicable in the case of the non-empty list bimonad $\bL^+=(\rL^+,
\mu^{\bL^+}, \eta^{\bL^+},
\Delta^{\bL^+},
\varepsilon^{\bL^+}, \theta)$. 
Also, since $\kappa \rF
$ is not an isomorphism, we have that it is not a Hopf monad in the sense of~\cite{MW11} (see Remark~\ref{hopf-monad-mw}).

\end{document}